\newtheorem{theorem}{Theorem}[section]
\newtheorem{prop}[theorem]{Proposition}
\newtheorem{prop-def}{Proposition-Definition}[section]
\theoremstyle{definition}
\newtheorem{defn}[theorem]{Definition}
\newtheorem{remark}[theorem]{Remark}
\newtheorem{exam}[theorem]{Example}
\newcommand{\nc}{\newcommand}
\nc{\delete}[1]{{}}
\nc{\mmargin}[1]{}
\nc{\mlabel}[1]{\label{#1}}  
\nc{\mcite}[1]{\cite{#1}}  
\nc{\mref}[1]{\ref{#1}}  
\nc{\mbibitem}[1]{\bibitem{#1}} 
	\nc{\mlabel}[1]{\label{#1}  
		{\hfill \hspace{1cm}{\bf{{\ }\hfill(#1)}}}}
	\nc{\mcite}[1]{\cite{#1}{{\bf{{\ }(#1)}}}}  
	\nc{\mref}[1]{\ref{#1}{{\bf{{\ }(#1)}}}}  
	\nc{\mbibitem}[1]{\bibitem[\bf #1]{#1}} 
 \font\cyrs=wncyr7
\newcommand{\bk}{{\mathbf{k}}}
\nc{\vep}{\varepsilon}
\nc{\bin}[2]{ (_{\stackrel{\scs{#1}}{\scs{#2}}})}  
\nc{\binc}[2]{(\!\! \begin{array}{c} \scs{#1}\\
		\scs{#2} \end{array}\!\!)}  
\nc{\bincc}[2]{  ( {\scs{#1} \atop
		\vspace{-1cm}\scs{#2}} )}  
\nc{\oline}[1]{\overline{#1}}
\nc{\mapm}[1]{\lfloor\!|{#1}|\!\rfloor}
\nc{\bs}{\bar{S}}
\nc{\la}{\longrightarrow}
\nc{\ot}{\otimes}
\nc{\rar}{\rightarrow}
\nc{\lon }{\,\rightarrow\,}
\nc{\dar}{\downarrow}
\nc{\dap}[1]{\downarrow \rlap{$\scriptstyle{#1}$}}
\nc{\defeq}{\stackrel{\rm def}{=}}
\nc{\dis}[1]{\displaystyle{#1}}
\nc{\dotcup}{\ \displaystyle{\bigcuN^\bullet}\ }
\nc{\hcm}{\ \hat{,}\ }
\nc{\hts}{\hat{\otimes}}
\nc{\hcirc}{\hat{\circ}}
\nc{\lleft}{[}
\nc{\lright}{]}
\nc{\curlyl}{\left \{ \begin{array}{c} {} \\ {} \end{array}
	\right.  \!\!\!\!\!\!\!}
\nc{\curlyr}{ \!\!\!\!\!\!\!
	\left. \begin{array}{c} {} \\ {} \end{array}
	\right \} }
\nc{\longmid}{\left | \begin{array}{c} {} \\ {} \end{array}
	\right. \!\!\!\!\!\!\!}
\nc{\ora}[1]{\stackrel{#1}{\rar}}
\nc{\ola}[1]{\stackrel{#1}{\la}}
\nc{\scs}[1]{\scriptstyle{#1}} \nc{\mrm}[1]{{\rm #1}}
\nc{\dirlim}{\displaystyle{\lim_{\longrightarrow}}\,}
\nc{\invlim}{\displaystyle{\lim_{\longleftarrow}}\,}
\nc{\dislim}[1]{\displaystyle{\lim_{#1}}} \nc{\colim}{\mrm{colim}}
\nc{\mvp}{\vspace{0.3cm}} \nc{\tk}{^{(k)}} \nc{\tp}{^\prime}
\nc{\ttp}{^{\prime\prime}} \nc{\svp}{\vspace{2cm}}
\nc{\vp}{\vspace{8cm}}
\nc{\modg}[1]{\!<\!\!{#1}\!\!>}
\nc{\intg}[1]{F_C(#1)}
\nc{\lmodg}{\!<\!\!}
\nc{\rmodg}{\!\!>\!}
\nc{\cpi}{\widehat{\Pi}}
\nc{\ssha}{{\mbox{\cyrs X}}} 
\nc{\tsha}{{\mbox{\cyrt X}}}
\nc{\shpr}{\diamond}    
\nc{\labs}{\mid\!}
\nc{\rabs}{\!\mid}
\nc{\ad}{\mrm{ad}}
\nc{\ann}{\mrm{ann}}
\nc{\Aut}{\mrm{Aut}}
\nc{\md}{\mbox{-}\mathsf{mod}}
\nc{\br}{\mrm{bre}}
\nc{\can}{\mrm{can}}
\nc{\Cont}{\mrm{Cont}}
\nc{\rchar}{\mrm{char}}
\nc{\cok}{\mrm{coker}}
\nc{\de}{\mrm{dep}}
\nc{\dtf}{{R-{\rm tf}}}
\nc{\dtor}{{R-{\rm tor}}}
\nc{\Div}{{\mrm Div}}
\nc{\Diff}{\mrm{DA}}
\nc{\Diffl}{\mathsf{DA}_\lambda}
\nc{\diffo}{{\mathsf{DO}_\lambda}}
\nc{\alg}{\mathsf{Alg}}
\nc{\End}{\mrm{End}}
\nc{\Ext}{\mrm{Ext}}
\nc{\Fil}{\mrm{Fil}}
\nc{\Fr}{\mrm{Fr}}
\nc{\Frob}{\mrm{Frob}}
\nc{\Gal}{\mrm{Gal}}
\nc{\GL}{\mrm{GL}}
\nc{\Hom}{\mrm{Hom}}
\nc{\Hoch}{\mrm{Hoch}}
\nc{\hsr}{\mrm{H}}
\nc{\hpol}{\mrm{HP}}
\nc{\id}{\mrm{id}}
\nc{\im}{\mrm{im}}
\nc{\Id}{\mrm{Id}}
\nc{\ID}{\mrm{ID}}
\nc{\Irr}{\mrm{Irr}}
\nc{\incl}{\mrm{incl}}
\nc{\length}{\mrm{length}}
\nc{\NLSW}{\mrm{NLSW}}
\nc{\Lie}{\mrm{Lie}}
\nc{\mchar}{\rm char}
\nc{\mpart}{\mrm{part}}
\nc{\ql}{{\QQ_\ell}}
\nc{\qp}{{\QQ_p}}
\nc{\rank}{\mrm{rank}}
\nc{\rcot}{\mrm{cot}}
\nc{\rdef}{\mrm{def}}
\nc{\rdiv}{{\rm div}}
\nc{\rtf}{{\rm tf}}
\nc{\rtor}{{\rm tor}}
\nc{\res}{\mrm{res}}
\nc{\SL}{\mrm{SL}}
\nc{\Spec}{\mrm{Spec}}
\nc{\tor}{\mrm{tor}}
\nc{\Tr}{\mrm{Tr}}
\nc{\tr}{\mrm{tr}}
\nc{\wt}{\mrm{wt}}
\nc{\op}{\mrm{op}}
\nc{\mbf}{\mathbf}
\nc{\bfk}{{\mathbf k}}
\nc{\bft}{{\mathbf t}}
\nc{\bfone}{{\bf 1}}
\nc{\bfzero}{{\bf 0}}
\nc{\detail}{\marginpar{\bf More detail}
	\noindent{\bf Need more detail!}
	\svp}
\nc{\gap}{\marginpar{\bf Incomplete}\noindent{\bf Incomplete!!}
	\svp}
\nc{\FMod}{\mathbf{FMod}}
\nc{\Int}{\mathbf{Int}}
\nc{\Mon}{\mathbf{Mon}}
\nc{\remarks}{\noindent{\bf Remarks: }}
\nc{\Rep}{\mathbf{Rep}}
\nc{\Rings}{\mathbf{Rings}}
\nc{\Sets}{\mathbf{Sets}}
\nc{\ob}{\mathsf{Ob}}
\nc{\BA}{{\mathbb A}}   \nc{\CC}{{\mathbb C}}
\nc{\DD}{{\mathbb D}}   \nc{\EE}{{\mathbb E}}
\nc{\FF}{{\mathbb F}}   \nc{\GG}{{\mathbb G}}
\nc{\HH}{{\mathbb H}}   \nc{\LL}{{\mathbb L}}
\nc{\NN}{{\mathbb N}}   \nc{\PP}{{\mathbb P}}
\nc{\QQ}{{\mathbb Q}}   \nc{\RR}{{\mathbb R}}
\nc{\TT}{{\mathbb T}}   \nc{\VV}{{\mathbb V}}
\nc{\ZZ}{{\mathbb Z}}   \nc{\TP}{\widetilde{P}}
\nc{\m}{{\mathbbm m}}
\nc{\cala}{{\mathcal A}}    \nc{\calc}{{\mathcal C}}
\nc{\cald}{\mathcal{D}}     \nc{\cale}{{\mathcal E}}
\nc{\calf}{{\mathcal F}}    \nc{\calg}{{\mathcal G}}
\nc{\calh}{{\mathcal H}}    \nc{\cali}{{\mathcal I}}
\nc{\call}{{\mathcal L}}    \nc{\calm}{{\mathcal M}}
\nc{\caln}{{\mathcal N}}    \nc{\calo}{{\mathcal O}}
\nc{\calp}{{\mathcal P}}    \nc{\calr}{{\mathcal R}}
\nc{\cals}{{\mathcal S}}
\nc{\calT}{{\mathcal T}}   \nc{\calt}{{\Omega}}
\nc{\calv}{{\mathcal V}}    \nc{\calw}{{\mathcal W}}
\nc{\calx}{{\mathcal X}}
\nc{\fraka}{{\mathfrak a}}
\nc{\frakb}{\mathfrak{b}}
\nc{\frakg}{{\frak g}}
\nc{\frakl}{{\frak l}}
\nc{\fraks}{{\frak s}}
\nc{\frakB}{{\frak B}}
\nc{\frakm}{{\frak m}}
\nc{\frakM}{{\frak M}}
\nc{\frakp}{{\frak p}}
\nc{\frakW}{{\frak W}}
\nc{\frakX}{{\frak X}}
\nc{\frakS}{{\frak S}}
\nc{\frakA}{{\frak A}}
\nc{\frakx}{{\frakx}}
\nc{\lir}[1]{\textcolor{red}{\underline{Li:}#1 }}
\nc{\tred}[1]{\textcolor{red}{#1}} \nc{\tgreen}[1]{\textcolor{green}{#1}}
\nc{\tblue}[1]{\textcolor{blue}{#1}} \nc{\tpurple}[1]{\textcolor{purple}{#1}}
\nc{\ra}{\rightarrow}
\nc{\yuan}[1]{\tred{\underline{Yuan:}#1 }}
\nc{\cal}{\mathcal}
\nc{\bbim}[2]{#1 #2} \nc{\bbbim}[2]{#1,\, #2} \nc{\RBF}{{\rm RBF}}
\nc{\frbf}{F_{\RBF}} \nc{\shaf}{\ssha_{\Omega}} \nc{\sham}{\diamond_{\Omega}}
\nc{\lf}{\lfloor} \nc{\rf}{\rfloor} \nc{\shan}{\ssha_{\lambda}}
\nc{\rlex}{{\rm lex}}
\def\de{\delta}
\def\la{\lambda}
\def\Z{\mathbb{Z}}
\def\C{\mathbb{C}}
\def\DD{{\cal D}}
\def\rar{\rightarrow}
\def\bs{\backslash}
\nc{\lbar}[1]{\overline{#1}}
\def\Cdot#1,#2,#3;{\cdot_{#1;\,#2,\,#3}}
\def\Mu#1,#2,#3;{\mu_{#1;\,#2,\,#3}}
\def\Lambda#1,#2,#3;{\lambda_{#1,\,(#2,\,#3)}}
\def\Rhd#1,#2,#3;{\rhd_{#1;\,#2,\,#3}}
\def\Lhd#1,#2,#3;{\lhd_{#1;\,#2,\,#3}}
\def\Star#1,#2,#3;{\star_{#1;\,#2,\,#3}}
\nc{\RBO}{{\mathrm{RBF}_q}}
\nc{\RBA}{{\mathrm{RBFA}_q}}
\nc{\Alg}{\mathrm{Alg,\Omega}}
\nc{\NjO}{\mathrm{NjO}}
\nc{\rmH}{\mathrm{H}}
\begin{document}

\title[Cohomology theory of Nijenhuis family $\Omega$-associative algebras ]{Cohomology theory of Nijenhuis family $\Omega$-associative algebras}

\author{Sami Benabdelhafidh}
\address{University of Sfax, Faculty of Sciences of Sfax, BP 1171, 3038 Sfax, Tunisia.}
\email{\bf abdelhafidhsami41@gmail.com}

\date{\today}

\begin{abstract}
Family algebraic structures indexed by a semigroup first appeared in the algebraic aspects of renormalizations in quantum field theory.
In this paper, we first introduce the concept of Nijenhuis family $\Omega$-associative algebras and we discuss the relationship between Nijenhuis family and other operators families on $\Omega$-associative algebras.  
Then, we define the cohomology theory of Nijenhuis family $\Omega$-associative algebras and show that this cohomology controls the corresponding deformations. Finally, we study abelian extensions of Nijenhuis family $\Omega$-associative algebras in terms of the second cohomology group.

\end{abstract}

\subjclass[2020]{
17B38, 
16D20,   
16E40,   
16S80   
}

\keywords{Nijenhuis family algebra, $\Omega$-associative algebra, cohomology, deformation, extension}

\maketitle

\tableofcontents

\allowdisplaybreaks

\section{Introduction}
Detemining the cohomology of an algebraic structure and applications to deformations and extensions are traditional problems initiated by Hochschild \cite{hoch} and Gerstenhaber \cite{gers} for associative algebras. The same studies were subsequently extended to the context of Lie algebras by Nijenhuis and Richardson \cite{nij-ric}. Since then, cohomologies of various other algebraic structures were extensively considered and their applications to deformations were obtained. On the other hand, algebras are often together with additional compatible structures such as morphisms, derivations, representations etc. In such cases, it is also meaningful to consider simultaneous deformations (i.e. deformations of algebras and additional structures). Gerstenhaber and Schack \cite{gers-sch} first considered the cohomology and deformation theory of associative algebras endowed with morphisms. 

The concept of Nijenhuis operators on associative algebras, introduced by Carinena et al. \cite{CGM00} to study quantum bi-Hamiltonian systems, is an analogue of the Nijenhuis operators on Lie algebras and manifolds.
Ebrahimi-Fard \cite{EbrahimiFard2004} interpreted the associative analogue of the Nijenhuis relation as the homogeneous version of the Rota-Baxter relation. Building on this perspective, he used the augmented modified quasi-shuffle product to construct free Nijenhuis associative algebras. Guo and Lei \cite{LeiGuo} provided an explicit construction of free Nijenhuis associative algebras using bracketed words, and applied this construction to derive the universal enveloping Nijenhuis associative algebra of an NS-algebra. An important example of the Nijenhuis operator on associative algebras is given by Das \cite[2.5 Proposition]{Das2020}, which establishes a connection between relative Rota-Baxter operators of weight $0$ and Nijenhuis operators.
Recently, the authors in \cite{Mabrouk}, have introduced the concept of Nijenhuis operator on mock-Lie bialgebras.
Deformations and homotopy of
Nijenhuis associative algebras were studied in \cite{ddas,song}.
The present author has introduced the deformations theory and minimal model of operads for Nijenhuis algebras morphisms \cite{Ben}.
\smallskip

The concept of algebras with multiple linear operators (also called $\Omega$-algebra) was first introduced by Kurosch in ~\cite{Kur}. The first example of this situation appeared in 2007 in a paper by  Ebrahimi-Fard, Gracia-Bondia and Patras \cite[Proposition~9.1]{FBP} (see also \cite[Theorem 3.7.2]{DK}) about algebraic aspects of renormalization in quantum field theory, where a ``Rota-Baxter family" appears: this terminology was suggested to the authors by  Guo (see Footnote following Proposition 9.2 therein), who further discussed the underlying structure under the name Rota-Baxter family algebra in \cite{Guo09}. Let $\Omega$ be a semigroup.
A Nijenhuis family on an associative algebra $A$ is a collection of linear operators $ \{N_\omega\}_{\omega\in\Omega}:A\ra A$ such that
\begin{equation*}
N_{\alpha}(a)N_{\beta}(b)=N_{\alpha\beta}\big( N_{\alpha}(a)b  + a N_{\beta}(b) -N _{\alpha\beta} (ab) \big),\, \text{ for }\, a, b \in A\,\text{ and }\, \alpha,\, \beta \in \Omega.
\end{equation*}
Then $(A, \mu, \{N_\omega\}_{\omega\in\Omega})$ is called a Nijenhuis family algebra \cite{Das}.
 The concept of Nijenhuis family algebra is a generalization of Nijenhuis  algebras. Recently, many scholars have begun to pay attention to families algebraic structures, such as Foissy~\cite{ Foi20, Foi18, FP}, Gao, Guo, Manchon and Zhang~\cite{GGZ21,ZGG, ZG, ZGM, ZGM23, ZM}, Aguiar~\cite{Agu20}, Das~\cite{Ddas,Das}.
 Various other kinds of family algebraic structures have been recently defined \cite{Foi20, Wang, Teng}.

Our motivation in this paper is to introduce and extensively study the cohomology of a Nijenhuis operators family (defined on an $\Omega$-associative algebra) and also the cohomology of a Nijenhuis family $\Omega$-associative algebra. We define the corresponding cohomology groups as the cohomology of the Nijenhuis operators family $\{N_\omega\}_{\omega\in\Omega}$. It is important to note that, unlike the Rota-Baxter family case, the cohomology of the Nijenhuis operators family $\{N_\omega\}_{\omega\in\Omega}$ cannot be expressed as the Hochschild cohomology of the deformed $\Omega$-associative algebra. 
Consequently, we construct a homomorphism from the cohomology of $\{N_\omega\} _{\omega\in\Omega}$ to the Hochschild cohomology of the deformed $\Omega$-associative algebra. Next, we put our interest to define the cohomology of a Nijenhuis family $\Omega$-associative algebra. To do so, given a Nijenhuis family $\Omega$-associative algebra, we first obtain a homomorphism from the Hochschild cochain complex of the underlying $\Omega$-associative algebra to the cochain complex induced by the Nijenhuis operators family. The mapping cone corresponding to this homomorphism is defined to be the cochain complex associated with the Nijenhuis family $\Omega$-associative algebra. However, we are interested in a reduced version of this cochain complex to study deformations and abelian extensions of Nijenhuis family $\Omega$-associative algebras. We show that the cohomology groups thus obtained govern the deformations of the Nijenhuis family $\Omega$-associative algebra, i.e. simultaneous deformations of the underlying algebra and the Nijenhuis operators family. Throughout this paper, we explicitly define the cohomology of a Nijenhuis family $\Omega$-associative algebra with coefficients in a more general Nijenhuis family bimodule. We observe that the second cohomology group with coefficients in a Nijenhuis family bimodule parametrizes the isomorphism classes of all abelian extensions.

In the present paper, we introduce the Nijenhuis family version of $\Omega$-associative algebras, and their cohomology theory. \textit{This paper is organized as follows}: In Section~\ref{sec: algebras}, we mainly introduce some basic concepts of $\Omega$-associative algebras, Nijenhuis family algebras and we construct a new bimodule structure on $\Omega$-associative algebras (see Proposition \ref{prop: new bimodule}). In Section~\ref{Sec: Cohomology}, we define a cohomology theory for Nijenhuis family $\Omega$-associative algebra which
involves both $\Omega$-associative algebra part and Nijenhuis operators family part (see Definition \ref{defi: cohomology of Nijenhuis family}). 
In Section~\ref{sec:formal deformations}, we study formal deformations of Nijenhuis family on $\Omega$-associative algebras,
where it is shown that a Nijenhuis family $\Omega$-associative algebra is rigid if the $2$nd-cohomology group is trivial
(see Theorem \ref{thm: deformation of Nijenhuis family}).
In Section~\ref{sec:abelian extension}, we study abelian extensions of Nijenhuis family $\Omega$-associative algebras and show that they are classified by the second cohomology (see Proposition \ref{prop: extension of Nijenhuis family}), as one would expect of a good cohomology theory.

\smallskip

Throughout this paper, $\bk$ denotes a field. All the vector spaces and algebras are over $\bk$ and all tensor products are also taking over $\bk$.

\section{Nijenhuis family algebra and their bimodule}
\label{sec: algebras}
In this section, we introduce some basic concept and facts about Nijenhuis family $\Omega$-associative algebras and their Nijenhuis family bimodule. First, let’s recall the related concepts of $\Omega$-associative algebras.
\begin{defn}(\cite{Agu20})
	An associative algebra relative to the semigroup $\Omega$ 
	is a vector space $A$ together with a family of bilinear operations
$ \mu = \{\mu_{\alpha,\, \beta}\}_{\alpha,\,\beta\in\Omega} $
	such that
	\[ \mu_{\alpha,\, \beta} : A \otimes  A \rightarrow  A,\,a\ot b\mapsto a\cdot_{\alpha,\,\beta} b\]
	satisfying
	\begin{align*}
		(a \cdot_{\alpha,\, \beta} b) \cdot_{\alpha \beta, \gamma} c = a \cdot_{\alpha,\, \beta \gamma} (b \cdot_{\beta, \gamma} c)
	\end{align*}
	for $ a, b, c \in A $ and $ \alpha, \beta, \gamma \in \Omega $.
	In this case, we call $ (A , \mu = \{\mu_{\alpha,\, \beta}\}_{\alpha,\,\beta\in\Omega}) $ an \textit{$ \Omega $-associative algebra}.
\end{defn}

\begin{defn}(\cite{Agu20})
	Let $ (A,\{\cdot_{\alpha,\,\beta}\}_{\alpha,\,\beta\in \Omega}) $ and $ (A',\{\cdot'_{\alpha,\,\beta}\}_{\alpha,\,\beta\in \Omega}) $ be two $\Omega$-associative algebras. A family of linear maps $ \{f_{\alpha}\}_{\alpha\in \Omega}: A\rightarrow A' $ is called an $\Omega$-$\textit{associative \, algebras \, morphism}$ if
	\begin{align}\label{Omega-ass-morphism}
		f_{\alpha\,\beta}(a\cdot_{\alpha,\,\beta}b)=f_{\alpha}(a)\cdot'_{\alpha,\,\beta}f_{\beta}(b),
	\end{align}
	for all $ a,b\in A,\,\alpha,\,\beta\in \Omega. $
\end{defn}

\begin{defn}\label{defn:associative  algebra}
	Let $(A , \{\mu_{\alpha,\, \beta}\}_{\alpha,\,\beta\in\Omega})$
	be an $\Omega$-associative algebra. A \textit{ bimodule} over it consists of a vector space $M$ together with two families of linear maps $ l=\{l_{\alpha,\,\beta}\}_{\alpha,\,\beta \in \Omega} $ and $ r=\{r_{\alpha,\,\beta}\}_{\alpha,\,\beta \in \Omega} $ with
	\begin{align*}
		l_{\alpha,\,\beta}: A \otimes M \rightarrow M ,& \quad (a, m) \mapsto a \cdot_{\alpha,\,\beta} m \\
		r_{\alpha,\,\beta}: M \otimes A \rightarrow M,  &  \quad (m, a) \mapsto m \cdot_{\alpha,\, \beta} a
	\end{align*}
	satisfying
	\begin{align*}
		(a \cdot_{\alpha,\, \beta} b) \cdot_{\alpha \beta,\, \gamma} m =& \, a \cdot_{\alpha,\, \beta \gamma} (b \cdot_{\beta, \gamma} m), \quad \\
		(a \cdot_{\alpha,\, \beta} m) \cdot_{\alpha \beta,\, \gamma} b =& \, a \cdot_{\alpha,\, \beta \gamma} (m \cdot_{\beta, \gamma} b),\\
		(m \cdot_{\alpha,\, \beta} a) \cdot_{\alpha \beta,\, \gamma} b =& \, m \cdot_{\alpha,\, \beta \gamma} (a \cdot_{\beta,\, \gamma} b).
	\end{align*}
for $a, b \in A, m \in M \text{ and } \alpha, \beta, \gamma \in \Omega$.
\end{defn}

\begin{defn}
Let $\Omega$ be a semigroup.
A \textit{ Nijenhuis family} on an $\Omega$-associative algebra $A$ is a collection of linear operators $\{N_\omega\}_{\omega\in\Omega}:A\ra A$ such that
\begin{equation*}
N_\alpha(a)\cdot_{\alpha,\,\beta}N_\beta(b)
=N_{\alpha\beta}\big(N_\alpha(a)\cdot_{\alpha,\,\beta}b
+a\cdot_{\alpha,\,\beta}N_\beta(b)-N_{\alpha \beta}(a\cdot_{\alpha,\,\beta}b)\big),~~\text{for}~~a,b \in A~~\text{and}~~\alpha,\beta \in \Omega.
\end{equation*}
Then the pair $(A,\, \{N_\omega\}_{\omega\in\Omega})$ is called a \textit{ Nijenhuis family $\Omega$-associative algebra}.
\mlabel{def:pp}
\end{defn}

 \begin{defn}
 Let $\left(A,\{N_\omega\}_{\omega\in\Omega}\right)$ and $\left(A^{\prime},\{N'_\omega\}_{\omega\in\Omega}\right)$ be two Nijenhuis family $\Omega$-associative algebras. A family of maps $\{f_\omega\}_{\omega\in\Omega}$ is called a \textit{Nijenhuis family $\Omega$-associative algebras morphism}  if $f_\omega:A\ra A'$ satisfy $f_{\omega}\circ N_\omega=N'_\omega\circ f_{\omega}$ and $f_{\alpha,\,\beta}\circ\mu_{\alpha,\,\beta}=\mu'_{\alpha,\,\beta}\circ(f_{\alpha}\ot f_{\beta})$, for any $\omega, \alpha, \beta\in\Omega.$
 \end{defn}
We provide examples of Nijenhuis family $\Omega$-associative algebras below.
\begin{exam}Let $ (A, ~ \! \{\cdot_{\alpha,\, \beta}\}_{\alpha, \beta \in \Omega} ~ \! )$ be an $\Omega$-associative algebra.
\begin{itemize}
    \item[(i)] Then the identity map $\mathrm{id}_A : A \rightarrow A$ is a Nijenhuis family on $A$.
    \item[(ii)] If $\{N_\omega\}_{\omega\in\Omega} : A \rightarrow A$ is a Nijenhuis family on $A$ then for any scalar $\lambda \in {\bf k}$, the map $\lambda \{N_\omega\}_{\omega\in\Omega}$ is also a Nijenhuis family on $A$.
    \item[(ii)] For any element $a \in A$, the left multiplication map $l_{\alpha,\,\beta} : A \rightarrow A,~ \! m \mapsto a \cdot_{\alpha,\,\beta} m$ and the right multiplication map $r_{\alpha,\,\beta} : A \rightarrow A, ~ \! m \mapsto m \cdot_{\alpha,\,\beta} a$ are both Nijenhuis family on $A$. In particular, for any non-negative integer $k$, the map $\{N_\omega\}_{\omega\in\Omega} : {\bf k} [a] \rightarrow {\bf k} [a]$ given by $\{N_\omega\}_{\omega\in\Omega} (a^n) = a^{n+k}$ is a Nijenhuis family on $A = {\bf k}[a].$ 
In fact, 
\begin{align*}
& N_{\alpha\beta}\big(N_\alpha(a^n) \cdot_{\alpha,\beta} a^m + a^n \cdot_{\alpha,\beta} N_\beta(a^m) - N_{\alpha\beta}(a^n \cdot_{\alpha,\beta} a^m)\big)\\
=& N_{\alpha\beta}\big(a^{n+k+m} + a^{n+m+k} - a^{n+m+k}\big)\\
=& N_{\alpha\beta}\big(a^{n+m+k}\big)\\
=& a^{n+m+2k}\\
=& a^{n+k} \cdot_{\alpha,\beta} a^{m+k}\\
=& N_\alpha(a^n) \cdot_{\alpha,\beta} N_\beta(a^m).
\end{align*}
\end{itemize}
\end{exam} 

Now, we give relationship between the Nijenhuis
family with Rota-Baxter and modified Rota-baxter families on $\Omega$-associative algebras.
\begin{prop}
Let $\{N_\omega\}_{\omega\in\Omega}: A \rightarrow A$ be a family of linear transformations where $A$ is an $\Omega$-associative algebra. Then
\begin{itemize}
\item[(a)] If $\{N^2_\omega\}_{\omega\in\Omega}=0$ then $\{N_\omega\}_{\omega\in\Omega}$ is a Nijenhuis family if and only if $\{N_\omega\}_{\omega\in\Omega}$ is a \textit{Rota-Baxter family}.
\item[(b)] If $\{N^2_\omega\}_{\omega\in\Omega}=\{N_\omega\}_{\omega\in\Omega}$ then $\{N_\omega\}_{\omega\in\Omega}$ is a Nijenhuis family if and only if $\{N_\omega\}_{\omega\in\Omega}$ is a \textit{Rota-Baxter family of weight $-1.$}
\item[(c)] If $\{N^2_\omega\}_{\omega\in\Omega}=\mathrm{id}$ then $\{N_\omega\}_{\omega\in\Omega}$ is a Nijenhuis family if and only if $\{N_\omega\}_{\omega\in\Omega}$ is a \textit{modified Rota-Baxter family of weight $-1.$}
\end{itemize}
\end{prop}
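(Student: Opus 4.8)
The plan is to verify each of the three equivalences by direct substitution, exploiting the fact that in every case the hypothesis on $\{N^2_\omega\}_{\omega\in\Omega}$ lets one rewrite the ``quadratic'' term $N_{\alpha\beta}^2(a\cdot_{\alpha,\,\beta}b)$ appearing in the Nijenhuis family relation in a simpler form. Recall that, writing out the definition, $\{N_\omega\}_{\omega\in\Omega}$ is a Nijenhuis family precisely when, for all $a,b\in A$ and $\alpha,\beta\in\Omega$,
\begin{equation*}
N_\alpha(a)\cdot_{\alpha,\,\beta}N_\beta(b)
=N_{\alpha\beta}\big(N_\alpha(a)\cdot_{\alpha,\,\beta}b\big)
+N_{\alpha\beta}\big(a\cdot_{\alpha,\,\beta}N_\beta(b)\big)
-N_{\alpha\beta}^2(a\cdot_{\alpha,\,\beta}b),
\end{equation*}
while, by definition, it is a Rota--Baxter family of weight $\lambda$ when
\begin{equation*}
N_\alpha(a)\cdot_{\alpha,\,\beta}N_\beta(b)
=N_{\alpha\beta}\big(N_\alpha(a)\cdot_{\alpha,\,\beta}b\big)
+N_{\alpha\beta}\big(a\cdot_{\alpha,\,\beta}N_\beta(b)\big)
+\lambda\,N_{\alpha\beta}\big(a\cdot_{\alpha,\,\beta}b\big),
\end{equation*}
and a modified Rota--Baxter family of weight $\lambda$ when
\begin{equation*}
N_\alpha(a)\cdot_{\alpha,\,\beta}N_\beta(b)
=N_{\alpha\beta}\big(N_\alpha(a)\cdot_{\alpha,\,\beta}b\big)
+N_{\alpha\beta}\big(a\cdot_{\alpha,\,\beta}N_\beta(b)\big)
+\lambda\,(a\cdot_{\alpha,\,\beta}b).
\end{equation*}
Comparing these identities, in each case the two relations agree except for the last summand, so the equivalence reduces to showing that the last summands coincide under the stated hypothesis.

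First I would treat (a): if $N^2_\omega=0$ for all $\omega$, then in particular $N_{\alpha\beta}^2=0$, so $-N_{\alpha\beta}^2(a\cdot_{\alpha,\,\beta}b)=0$, which is exactly the weight-$0$ term $\lambda\,N_{\alpha\beta}(a\cdot_{\alpha,\,\beta}b)$ with $\lambda=0$; hence the Nijenhuis family relation and the Rota--Baxter family relation become literally the same equation, giving the equivalence. For (b), if $N^2_\omega=N_\omega$ for all $\omega$, then $N_{\alpha\beta}^2=N_{\alpha\beta}$, so $-N_{\alpha\beta}^2(a\cdot_{\alpha,\,\beta}b)=-N_{\alpha\beta}(a\cdot_{\alpha,\,\beta}b)=(-1)\,N_{\alpha\beta}(a\cdot_{\alpha,\,\beta}b)$, which matches the Rota--Baxter family term of weight $\lambda=-1$; again the two relations coincide. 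For (c), if $N^2_\omega=\mathrm{id}$ for all $\omega$, then $N_{\alpha\beta}^2=\mathrm{id}$, so $-N_{\alpha\beta}^2(a\cdot_{\alpha,\,\beta}b)=-(a\cdot_{\alpha,\,\beta}b)=(-1)\,(a\cdot_{\alpha,\,\beta}b)$, which is the modified Rota--Baxter family term of weight $\lambda=-1$; once more the two defining equations are identical.

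I do not anticipate a genuine obstacle here: the whole argument is an inspection of three defining identities after a one-line rewriting of a single term, and no structure beyond the semigroup indexing and the given power relation on the operators is used. The one point that deserves a word of care is the indexing: the hypothesis is imposed on $N_\omega$ for every $\omega\in\Omega$, and in the verification one applies it with $\omega=\alpha\beta$, which is legitimate since $\Omega$ is a semigroup and $\alpha\beta\in\Omega$; I would make this explicit so the reader sees that no compatibility between $N_\alpha$, $N_\beta$ and $N_{\alpha\beta}$ is needed beyond what is already built into the hypothesis. With that remark in place, the proposition follows immediately, and one could even record all three cases uniformly by noting that the hypothesis $N^2_\omega=c\,N_\omega^{\,j}$ (with $(c,j)\in\{(0,0),(1,1)\}$) or $N^2_\omega=\mathrm{id}$ forces the quadratic term to collapse to a scalar multiple of either $N_{\alpha\beta}(a\cdot_{\alpha,\,\beta}b)$ or $a\cdot_{\alpha,\,\beta}b$.
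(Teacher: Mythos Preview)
Your proposal is correct and follows essentially the same approach as the paper: both proofs expand the Nijenhuis family identity, use the hypothesis on $N^2_\omega$ to simplify the term $N_{\alpha\beta}^2(a\cdot_{\alpha,\beta}b)$, and compare with the corresponding Rota--Baxter or modified Rota--Baxter family identity. Your explicit remark that the hypothesis is applied at $\omega=\alpha\beta\in\Omega$ is a nice clarification not spelled out in the paper, but the underlying argument is identical.
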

\begin{proof}
\begin{enumerate}
\item Let $\{N^2_\omega\}_{\omega\in\Omega}=0.$ Suppose $\{N_\omega\}_{\omega\in\Omega}$ is a Nijenhuis family. Then for any $a,b \in A$, we have 
\begin{align*}
N_\alpha(a)\cdot_{\alpha,\,\beta}N_\beta(b)
&=N_{\alpha\beta}\big(N_\alpha(a)\cdot_{\alpha,\,\beta}b
+a\cdot_{\alpha,\,\beta}N_\beta(b)-N_{\alpha \beta}(a\cdot_{\alpha,\,\beta}b)\big)\\
&=N_{\alpha\beta}(N_\alpha(a) \cdot_{\alpha,\,\beta}b+a\cdot_{\alpha,\,\beta}N_\beta(b))-N_{\alpha\beta}^2(a \cdot_{\alpha,\,\beta} b)\\
&=N_{\alpha\beta}(N_\alpha (a)\cdot_{\alpha,\,\beta} b+a \cdot_{\alpha,\,\beta} N_\beta(b))
\end{align*}
Hence, $\{N_\omega\}_{\omega\in\Omega}$ is a Rota-Baxter family (see \cite{Ddas}). Proof of the converse part is similar.
 \item Let $\{N^2_\omega\}_{\omega\in\Omega}=\{N_\omega\}_{\omega\in\Omega}.$ Suppose $\{N_\omega\}_{\omega\in\Omega}$ is a Nijenhuis family. Then  for any $a,b \in A$ we have 
\begin{align*}
N_\alpha(a)\cdot_{\alpha,\,\beta}N_\beta(b)
&=N_{\alpha\beta}\big(N_\alpha(a)\cdot_{\alpha,\,\beta}b
+a\cdot_{\alpha,\,\beta}N_\beta(b)-N_{\alpha \beta}(a\cdot_{\alpha,\,\beta}b)\big)\\
&=N_{\alpha\beta}(N_\alpha(a) \cdot_{\alpha,\,\beta}b+a\cdot_{\alpha,\,\beta}N_\beta(b))-N_{\alpha\beta}^2(a \cdot_{\alpha,\,\beta} b)\\
&=N_{\alpha\beta}(N_\alpha(a) \cdot_{\alpha,\,\beta}b+a\cdot_{\alpha,\,\beta}N_\beta(b)-a \cdot_{\alpha,\,\beta} b).
\end{align*}
Hence, $\{N_\omega\}_{\omega\in\Omega}$ is a Rota-Baxter family of weight $-1$ (see \cite{Wang}). Similarly, the converse can be shown.
\item
Let $\{N^2_\omega\}_{\omega\in\Omega}=\mathrm{id}_A.$ Suppose $\{N_\omega\}_{\omega\in\Omega})$ is a Nijenhuis family. Then, for any $a,b \in A$, we have 
\begin{align*}
N_\alpha(a)\cdot_{\alpha,\,\beta}N_\beta(b)
&=N_{\alpha\beta}\big(N_\alpha(a)\cdot_{\alpha,\,\beta}b
+a\cdot_{\alpha,\,\beta}N_\beta(b)-N_{\alpha \beta}(a\cdot_{\alpha,\,\beta}b)\big)\\
&=N_{\alpha\beta}(N_\alpha(a) \cdot_{\alpha,\,\beta}b+a\cdot_{\alpha,\,\beta}N_\beta(b))-N_{\alpha\beta}^2(a \cdot_{\alpha,\,\beta} b)\\
&=N_{\alpha\beta}(N_\alpha(a) \cdot_{\alpha,\,\beta}b+a\cdot_{\alpha,\,\beta}N_\beta(b))-a \cdot_{\alpha,\,\beta} b.
\end{align*}
Hence, $\{N_\omega\}_{\omega\in\Omega}$ is a modified Rota-Baxter family of weight $-1$. In a similar way the other cases can be shown.
\end{enumerate}
\end{proof}

Now, we give the concept of Nijenhuis family bimodule over an $\Omega$-associative algebra and
we construct a new Nijenhuis family bimodule.
\begin{defn}\label{Def: Rota-Baxter family bimodule}
Let
$(A,\{\mu_{\alpha,\, \beta} \} _{\alpha,\,\beta \in\Omega},\{N_\omega\}_{\omega\in\Omega})$ be a Nijenhuis family algebra and $M$ be a bimodule
over $\Omega$-associative algebra $(A, \{\mu_{\alpha,\, \beta} \} _{\alpha,\,\beta \in\Omega})$. We say that $M$ is a \textit{ bimodule
over Nijenhuis family algebra} $(A,\{\mu_{\alpha,\, \beta} \} _{\alpha,\,\beta \in\Omega}
, \{N_\omega\} _{\omega\in\Omega})$  or a \textit{Nijenhuis family bimodule} if $M$ is endowed with a family of
linear operators $\{N_{M,\,\omega}\}_{\omega\in\Omega}: M\rightarrow M$ such that the following
equations
\begin{eqnarray}
\label{eq:left}N_\alpha(a)\cdot_{\alpha,\,\beta} N_{M,\,\beta}(m)&=&N_{M,\alpha\beta}\big(a \cdot_{\beta} N_{M,\,\beta}(m)+N_\alpha(a) \cdot_{\alpha} m-N_{M,\,\alpha\beta}( a \cdot_{\alpha,\,\beta} m)\big),\\
\label{eq:right}N_{M,\,\alpha}(m) \cdot_{\alpha,\,\beta} N_{\beta}(a)&=&N_{M,\,\alpha\beta}\big(m \cdot_{\beta} N_\beta(a)+N_{M,\,\alpha}(m) \cdot_{\alpha} a-N_{M,\,\alpha\beta} (m \cdot_{\alpha,\,\beta} a)\big),
\end{eqnarray}
hold for any $a\in A$ and $m\in M$.
\end{defn}

\begin{exam}
Any Nijenhuis family $\Omega$-associative algebra $(A,\{\mu_{\alpha,\, \beta} \} _{\alpha,\,\beta \in\Omega}
, \{N_\omega\} _{\omega\in\Omega})$ is a Nijenhuis family bimodule over itself, called the \textit{regular Nijenhuis bimodule}.
\end{exam}
 
Nijenhuis family $\Omega$-associative algebras and Nijenhuis family  bimodule have some descendent properties.
\begin{prop} \label{Prop: new RBF algebra}
	Let $(A, \{\cdot_{\alpha,\,\beta}\}_{\alpha,\,\beta\in\Omega}, \{N_\omega\}_{\omega\in\Omega})$ be a Nijenhuis family $\Omega$-associative algebra. Define a new binary operation as:
\begin{eqnarray*}
a\star_{\alpha ,\,\beta} b:=a\cdot_{\alpha,\,\beta} N_\beta(b)+N_\alpha(a)\cdot_{\alpha,\,\beta} b-N_{\alpha \beta}( a\cdot_{\alpha,\,\beta} b)
\end{eqnarray*}
for any $a,b\in A$. Then
\begin{enumerate}

\item   the family $\{\star_{\alpha,\,\beta}\}_{\alpha,\,\beta\in\Omega} $ is associative and  $(A, \{\star_{\alpha,\,\beta}\}_{\alpha,\,\beta\in\Omega})$ is a new $\Omega$-associative algebra;

    \item the triple  $(A, \{\star_{\alpha,\,\beta}\}_{\alpha,\,\beta\in\Omega} ,\{N_\omega\}_{\omega\in\Omega})$ also forms a Nijenhuis family $\Omega$-associative algebra and denote it by $A_{\star}$;

\item the family $\{N_\omega\}_{\omega\in\Omega}:(A,\{\star_{\alpha,\,\beta}\}_{\alpha,\,\beta\in\Omega}, \{N_\omega\}_{\omega\in\Omega})\rightarrow (A, \{\cdot_{\alpha,\,\beta}\}_{\alpha,\,\beta\in\Omega}, \{N_\omega\}_{\omega\in\Omega})$ is a  morphism Nijenhuis family $\Omega$-associative algebras.
    \end{enumerate}
	\end{prop}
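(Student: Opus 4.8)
The plan is to prove the three assertions in order, with (1) being the technical heart of the matter and (2)--(3) following quickly from (1) together with the defining identities. First I would verify associativity of $\{\star_{\alpha,\,\beta}\}_{\alpha,\,\beta\in\Omega}$. The key observation is that the deformed product can be written compactly: if we introduce the notation $a\star_{\alpha,\,\beta} b = a\cdot_{\alpha,\,\beta}N_\beta(b)+N_\alpha(a)\cdot_{\alpha,\,\beta}b-N_{\alpha\beta}(a\cdot_{\alpha,\,\beta}b)$, then the Nijenhuis family identity says precisely that $N_\alpha(a)\cdot_{\alpha,\,\beta}N_\beta(b)=N_{\alpha\beta}(a\star_{\alpha,\,\beta}b)$. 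So I would expand both $(a\star_{\alpha,\,\beta}b)\star_{\alpha\beta,\,\gamma}c$ and $a\star_{\alpha,\,\beta\gamma}(b\star_{\beta,\,\gamma}c)$ using the definition, obtaining a sum of terms each of the shape $(\text{something})\cdot(\text{something})$ where the ``somethings'' are built from $a,b,c$ by applying $N_\bullet$ or not; the crucial simplifications come from (i) the $\Omega$-associativity of $\{\cdot_{\alpha,\,\beta}\}$ itself (to move parentheses around), and (ii) the rewriting $N_\alpha(x)\cdot N_\beta(y)=N_{\alpha\beta}(x\star y)$ to collapse the quadratic-in-$N$ terms. The semigroup bookkeeping on the indices $\alpha,\beta,\gamma$ must be tracked carefully but is forced at every step by the requirement that the subscripts match.

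For assertion (2), I would show that the same family $\{N_\omega\}_{\omega\in\Omega}$ is still a Nijenhuis family with respect to $\{\star_{\alpha,\,\beta}\}$. This means checking
\[
N_\alpha(a)\star_{\alpha,\,\beta}N_\beta(b)=N_{\alpha\beta}\big(N_\alpha(a)\star_{\alpha,\,\beta}b+a\star_{\alpha,\,\beta}N_\beta(b)-N_{\alpha\beta}(a\star_{\alpha,\,\beta}b)\big).
\]
The most efficient route is to use the identity $N_\alpha(x)\cdot_{\alpha,\,\beta}N_\beta(y)=N_{\alpha\beta}(x\star_{\alpha,\,\beta}y)$ repeatedly: substituting $x=N_\alpha(a)$ and $y=N_\beta(b)$ reduces the left-hand side, while the right-hand side can be reorganized by recognizing that $\star$ is itself defined through $N$ and invoking naturality of $N$ with respect to $\star$ (which is essentially assertion (3)). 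Alternatively one can substitute the definition of $\star$ directly into the displayed equation and reduce everything to the original Nijenhuis family identity for $\{\cdot_{\alpha,\,\beta}\}$ applied to the elements $N_\alpha(a), N_\beta(b)$ and their images; this is routine once (1) is in hand.

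Assertion (3) is immediate: a Nijenhuis family $\Omega$-associative algebra morphism requires $f_\omega\circ N_\omega=N_\omega\circ f_\omega$ and $f_{\alpha\beta}\circ\mu_{\alpha,\,\beta}=\mu'_{\alpha,\,\beta}\circ(f_\alpha\otimes f_\beta)$; here $f_\omega=N_\omega$, the first condition is trivially satisfied, and the second condition reads $N_{\alpha\beta}(a\star_{\alpha,\,\beta}b)=N_\alpha(a)\cdot_{\alpha,\,\beta}N_\beta(b)$, which is exactly the Nijenhuis family identity rewritten via the definition of $\star$. I expect the main obstacle to be purely computational: assertion (1)'s verification of associativity produces a moderately large collection of terms (on the order of a dozen before cancellation), and the bookkeeping of which terms cancel against which — together with making sure every semigroup subscript $\alpha,\beta,\gamma,\alpha\beta,\beta\gamma,\alpha\beta\gamma$ is correctly propagated through each application of the $\Omega$-associativity and the Nijenhuis relation — is where an error would most easily creep in. A useful organizing principle to keep this manageable is to group the expansion by "number of $N$'s applied," since the purely-linear-in-$N$ part of the computation mirrors the associativity of $\cdot$ shifted by one $N$, and the quadratic part is controlled entirely by the rewriting $N_\alpha(x)\cdot_{\alpha,\,\beta}N_\beta(y)=N_{\alpha\beta}(x\star_{\alpha,\,\beta}y)$.
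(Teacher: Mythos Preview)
Your proposal is correct and matches the paper's approach: the paper itself states only ``It is the direct calculation'' and gives no further details, so your expansion---using the key rewriting $N_{\alpha\beta}(a\star_{\alpha,\,\beta}b)=N_\alpha(a)\cdot_{\alpha,\,\beta}N_\beta(b)$ as the organizing identity and then checking (1)--(3) in order---is exactly the direct verification the paper leaves to the reader.
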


\begin{proof}
It is the direct calculation.
\end{proof}
Now, we construct a new Nijenhuis family bimodule from old ones.

\begin{prop}\label{prop: new bimodule}
Let $\Omega$ be a semigroup. Let $(A, \{\mu_{\alpha,\,\beta}\} _{\alpha,\,\beta\in\Omega},\{N_\omega\}_{\omega\in\Omega})$ be a Nijenhuis family $\Omega$-associative algebra and $(M, \{N_{M,\,\omega}\}_{\omega\in\Omega})$ be a Nijenhuis family $\Omega$-associative bimodule over it.  For $a,b\in A,m\in M$, and $\alpha,\beta\in\Omega,$ define a family of left actions $\{\rhd_{\alpha,\beta}\} _{\alpha,\,\beta\in\Omega} $ and a family of right actions $\{\lhd_{\alpha,\beta}\} _{\alpha,\,\beta\in\Omega}$ as follows:
\begin{align}
\label{eq:right module} \rhd_{\alpha,\beta} : A \otimes M \rightarrow M, \quad & a \rhd_{\alpha,\beta}m := N_\alpha(a) \cdot_{\alpha,\,\beta} m - N_{M,\,\alpha \beta} (a \cdot_{\alpha,\,\beta} m),\\
\label{eq:left module}\lhd_{\alpha,\beta} : M \otimes A \rightarrow M ,\quad & m \lhd_{\alpha,\beta} a := m \cdot_{\alpha,\,\beta} N_\beta(a) - N_{M,\,\alpha \beta} (m \cdot_{\alpha,\,\beta} a).
\end{align}
Then $M$ is a Nijenhuis family $\Omega$-associative bimodule over $A_{\star}.$
\end{prop}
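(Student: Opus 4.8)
The plan is to verify directly the three bimodule axioms of Definition~\ref{defn:associative  algebra} for the data $(M,\{\rhd_{\alpha,\beta}\},\{\lhd_{\alpha,\beta}\})$ relative to the algebra $A_\star$ with multiplication $\{\star_{\alpha,\beta}\}$, and then the two Nijenhuis-family compatibility equations \eqref{eq:left}--\eqref{eq:right} of Definition~\ref{Def: Rota-Baxter family bimodule} for the operators $\{N_{M,\omega}\}$ with respect to $A_\star$. First I would record the three identities to be proven, namely, for $a,b\in A$, $m\in M$, $\alpha,\beta,\gamma\in\Omega$:
\begin{align*}
(a\star_{\alpha,\beta}b)\rhd_{\alpha\beta,\gamma}m&=a\rhd_{\alpha,\beta\gamma}(b\rhd_{\beta,\gamma}m),\\
(a\rhd_{\alpha,\beta}m)\lhd_{\alpha\beta,\gamma}b&=a\rhd_{\alpha,\beta\gamma}(m\lhd_{\beta,\gamma}b),\\
(m\lhd_{\alpha,\beta}a)\lhd_{\alpha\beta,\gamma}b&=m\lhd_{\alpha,\beta\gamma}(a\star_{\beta,\gamma}b).
\end{align*}
The strategy for each is the same: expand both sides using the defining formulas \eqref{eq:right module}, \eqref{eq:left module} and the definition of $\star$, then collect terms by the number of $N$'s applied. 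Each side becomes a sum of terms of ``total degree'' $0,1,2,3$ in the operators; the degree-$0$ and the pure $\Omega$-associativity relations of $A$ and of $M$ handle the lowest-order matches, while the higher-order terms are matched by repeated use of the module-version Nijenhuis relations \eqref{eq:left}--\eqref{eq:right} (and, where three $N$'s meet, of the algebra Nijenhuis relation of Definition~\ref{def:pp}) together with the bimodule axioms of $M$ over $A$. I expect the identities to be set up so that after full expansion every monomial on the left cancels against one on the right, exactly as in the associative (non-family) Nijenhuis case; the semigroup indices simply propagate as $\alpha,\beta,\gamma\mapsto\alpha\beta,\gamma$ etc.\ and never obstruct the bookkeeping, since all the structure relations are already indexed compatibly.

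The one genuine subtlety, and the step I expect to be the main obstacle, is the middle (mixed left--right) axiom: unlike the first and third, it couples a left action, a right action, and the need to pass an $N_{M,\alpha\beta}$ past a product, so one must apply \emph{both} \eqref{eq:left} and \eqref{eq:right} and it is not a priori obvious that the leftover terms pair off. I would handle this by expanding $(a\rhd_{\alpha,\beta}m)\lhd_{\alpha\beta,\gamma}b$ completely (six terms: the product $N_\alpha(a)\cdot m-N_{M,\alpha\beta}(a\cdot m)$ acted on the right by $N_\gamma(b)$ and by $-N_{M}(\cdot\, b)$), doing the same on the right-hand side, and then using the $A$-bimodule mixed axiom $(a\cdot_{\alpha,\beta\gamma}m)\cdot b=a\cdot(m\cdot b)$ together with \eqref{eq:left}/\eqref{eq:right} applied to the pieces $N_\alpha(a)\cdot N_{M}(m)$-type and $N_{M}(m)\cdot N_\gamma(b)$-type subexpressions to reduce both sides to the common expression $N_\alpha(a)\cdot_{\alpha,\beta\gamma}m\cdot_{\ldots}N_\gamma(b)$ plus lower-order $N_{M}$-corrections. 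Since Proposition~\ref{Prop: new RBF algebra} already guarantees $\{\star_{\alpha,\beta}\}$ is $\Omega$-associative, no separate check of that is needed here, which streamlines the first and third axioms considerably.

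Finally, for the Nijenhuis-family bimodule conditions over $A_\star$ one must show
\[
N_\alpha(a)\star_{\alpha,\beta}N_{M,\beta}(m)=N_{M,\alpha\beta}\bigl(a\star_\beta N_{M,\beta}(m)+N_\alpha(a)\star_\alpha m-N_{M,\alpha\beta}(a\star_{\alpha,\beta}m)\bigr)
\]
and its right-sided analogue. Here I would substitute the definition of $\star$ everywhere, so that each side becomes an expression in the original $\cdot_{\alpha,\beta}$ together with nested $N$'s and $N_{M}$'s, and then reduce using \eqref{eq:left}, \eqref{eq:right}, the algebra relation of Definition~\ref{def:pp}, and the identity $N_{M,\alpha\beta}\circ N_{M,\alpha\beta}$ appearing naturally from the double-$N_M$ terms. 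I anticipate this is a longer but purely mechanical computation of the same flavour as the proof of Proposition~\ref{Prop: new RBF algebra}, and in the write-up it can reasonably be compressed to ``a direct, if lengthy, verification,'' displaying only the middle bimodule axiom in full as the representative case.
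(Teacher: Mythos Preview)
Your plan matches the paper's proof closely: direct expansion and term-by-term matching for the three bimodule axioms, then the two Nijenhuis compatibility conditions, invoking Eqs.~\eqref{eq:left}--\eqref{eq:right} and the algebra Nijenhuis relation where needed. Two small corrections: in your displayed Nijenhuis-compatibility identity over $A_\star$ you have written $\star$ where the action $\rhd$ is meant (the product $\star$ lives on $A$, not between $A$ and $M$; the correct statement is $N_\alpha(a)\rhd_{\alpha,\beta}N_{M,\beta}(m)=N_{M,\alpha\beta}\bigl(a\rhd_{\alpha,\beta}N_{M,\beta}(m)+N_\alpha(a)\rhd_{\alpha,\beta}m-N_{M,\alpha\beta}(a\rhd_{\alpha,\beta}m)\bigr)$), and the Nijenhuis-bimodule check is in fact shorter than you anticipate---both sides collapse, after a single application of \eqref{eq:left}, to the common expression $N_{M,\alpha\beta}\bigl(N_\alpha^2(a)\cdot_{\alpha,\beta}m-N_{M,\alpha\beta}(N_\alpha(a)\cdot_{\alpha,\beta}m)\bigr)$, with no need for the algebra relation of Definition~\ref{def:pp} at that stage.
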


\begin{proof}
	Firstly, we show that $(M, \{\rhd _{\alpha,\,\beta}\} _{\alpha,\,\beta\in\Omega})$ is a left module over $(A, \{\star_{\alpha,\,\beta}\} _{\alpha,\,\beta\in\Omega})$. On the one hand, we have
	\begin{align*}
	&a\rhd_{\alpha,\beta\gamma}(b\rhd_{\beta,\gamma}m)\\
={}& a\rhd_{ \alpha,\beta\gamma}(N_\beta(b)\cdot_{\beta,\,\gamma} m-N_{M,\,\beta\gamma}(b\cdot_{\beta,\,\gamma} m))\\
	={}&N_\alpha(a)\cdot_{\alpha,\,\beta\gamma}\Big(N_\beta(b)\cdot_{\beta,\,\gamma} m-N_{M,\,\beta\gamma}(b\cdot_{\beta,\,\gamma} m)\Big)\\
&-N_{M,\,\alpha\beta\gamma}\Big(a\cdot_{\alpha,\beta\gamma}(N_\beta(b)\cdot_{\beta,\gamma} m)-a\cdot_{\alpha,\beta\gamma} N_{M,\,\beta\gamma}(b\cdot_{\beta,\gamma} m)\Big)\\
	={}&N_\alpha(a)\cdot_{\alpha,\beta\gamma}\Big(N_\beta(b)\cdot_{\beta,\gamma}m\Big)
-\underline{N_\alpha(a)\cdot_{\alpha,\,\beta\gamma} N_{M,\,\beta\gamma}(b \cdot_{\beta,\gamma} m)}\\
&-N_{M,\,\alpha\beta\gamma}\Big(a\cdot_{\alpha,\,\beta\gamma} (N_\beta(b)\cdot_{\beta,\gamma} m)-a \cdot_{\alpha,\beta\gamma} N_{M,\,\beta\gamma}(b \cdot_{\beta,\gamma} m)\Big)\\
&\hspace{5cm}(\text{by Eq.~(\ref{eq:left}) of the underline item})\\
	={}&N_\alpha(a) \cdot_{\alpha,\beta\gamma}\Big(N_\beta(b) \cdot_{\beta,\gamma} m\Big)-N_{M,\,\alpha\beta\gamma}\Big(N_\alpha(a) \cdot_{\alpha,\beta\gamma}(b \cdot_{\beta,\gamma} m)\Big)\\
&-N_{M,\,\alpha\beta\gamma}
\Big(a \cdot_{\alpha,\beta\gamma}(N_\beta(b) \cdot_{\beta,\gamma} m)\Big)+N^2_{M,\,\alpha\beta\gamma}\Big(a \cdot_{\alpha,\beta\gamma}(b \cdot_{\beta,\gamma} m)\Big).
\end{align*}
 On the other hand, we have
 \begin{align*}
	&(a\star_{\alpha,\beta} b)\rhd_{\alpha\beta,\gamma} m\\
={}&N_{\alpha\beta}(a\star_{\alpha,\beta}  b)\cdot_{\alpha\beta,\,\gamma} m-N_{M,\alpha\beta\gamma}\big((a\star_{\alpha,\beta} b)\cdot_{\alpha\beta,\,\gamma} m\big)\\
	={}&(N_\alpha(a)\cdot_{\alpha,\beta} N_\beta(b))\cdot_{\alpha\beta,\gamma} m-N_{M,\alpha\beta\gamma}\Big((a\cdot_{\alpha,\beta} N_\beta(b))\cdot_{\alpha\beta,\gamma} m+(N_\alpha(a)\cdot_{\alpha,\beta} b)\cdot_{\alpha\beta,\gamma} m\\
&-N_{M,\alpha\beta\gamma} ((a\cdot_{\alpha,\beta} b)\cdot_{\alpha\beta,\gamma} m) \Big).
	\end{align*}
Applying Definition~\ref{defn:associative  algebra} and comparing the items of both sides, we have
\[	a\rhd_{\alpha,\beta\gamma}(b\rhd_{\beta,\gamma}m)=(a\star_{\alpha,\beta}  b)\rhd_{\alpha\beta,\gamma} m.\]
Thus the family $\{\rhd_{\alpha,\beta}\} _{\alpha,\,\beta\in\Omega}$ makes $M$ into a left module over $(A, \{\star_{\alpha,\beta}\} _{\alpha,\,\beta\in\Omega})$. Similarly, one can check that the family $\{\rhd_{\alpha,\beta}\} _{\alpha,\,\beta\in\Omega}$ defines a right module structure on $M$ over $(A, \{\star_{\alpha,\beta}\} _{\alpha,\,\beta\in\Omega})$.

Now, we are going to check the compatibility of operations $\{\rhd_{\alpha,\beta}\} _{\alpha,\,\beta\in\Omega}$ and $\{\lhd_{\alpha,\beta}\} _{\alpha,\,\beta\in\Omega}$. On the one hand, we have
\begin{align*}
	&(a\rhd_{\alpha,\beta} m)\lhd_{\alpha\beta,\gamma} b\\
={}& \Big(N_\alpha(a)\cdot_{\alpha,\beta} m-N_{M,\alpha\beta}(a\cdot_{\alpha,\beta} m)\Big)\lhd_{\alpha\beta,\gamma} b\\
	={}& \Big(N_\alpha(a)\cdot_{\alpha,\beta} m-N_{M,\alpha\beta}(a\cdot_{\alpha,\beta} m)\Big)\cdot_{\alpha\beta,\gamma}N_\gamma(b)-N_{M,\alpha\beta\gamma}\big((N_\alpha(a)\cdot_{\alpha,\beta} m)\cdot_{\alpha\beta,\gamma} b\\
&-N_{M,\alpha\beta}(a\cdot_{\alpha,\beta} m)\cdot_{\alpha\beta,\gamma} b\big)\\
={}& \Big(N_\alpha(a)\cdot_{\alpha,\beta} m\Big)\cdot_{\alpha\beta,\gamma}N_\gamma(b)-\underline{N_{M,\alpha\beta}(a\cdot_{\alpha,\beta} m)\cdot_{\alpha\beta,\gamma}N_\gamma(b)}\\
&-N_{M,\alpha\beta\gamma}\big((N_\alpha(a)\cdot_{\alpha,\beta} m)\cdot_{\alpha\beta,\gamma} b
-N_{M,\alpha\beta}(a\cdot_{\alpha,\beta} m)\cdot_{\alpha\beta,\gamma} b\big)\\
&\hspace{3cm}(\text{by Eq.~(\ref{eq:right}) of the underline item})\\
={}&(N_\alpha(a)\cdot_{\alpha,\beta} m)\cdot _{\alpha\beta,\,\gamma}N_\gamma(b)-N_{M,\alpha\beta\gamma}\big(N_{M,\alpha\beta}(a\cdot_{\alpha,\beta} m)\cdot_{\alpha\beta,\gamma} b+(a\cdot_{\alpha,\beta} m)\cdot_{\alpha\beta,\gamma} N_\gamma(b)\\
&-N_{M,\alpha \beta \gamma}((a\cdot_{\alpha,\beta} m)\cdot_{\alpha\beta,\gamma} b)\big)
-N_{M,\alpha\beta\gamma}\big((N_\alpha(a)\cdot_{\alpha,\beta} m)\cdot_{\alpha\beta,\gamma} b\big)\\
&+N_{M,\alpha\beta\gamma}\big(N_{M,\alpha\beta}(a\cdot_{\alpha,\beta} m)\cdot_{\alpha\beta,\gamma}b\big)\\
={}& (N_\alpha(a)\cdot_{\alpha,\beta} m)\cdot_{\alpha\beta,\gamma} N_\gamma(b)-N_{M,\alpha\beta\gamma}\big((a\cdot_{\alpha,\beta} m)\cdot_{\alpha\beta,\gamma} N_\gamma(b)\big)\\
&-N_{M,\alpha\beta\gamma}((N_\alpha(a)\cdot_{\alpha,\beta} m)\cdot_{\alpha\beta,\gamma} b)+N^2_{M,\alpha\beta\gamma}((a\cdot_{\alpha,\beta} m)\cdot_{\alpha\beta,\gamma} b).
\end{align*}
On the other hand, we have
\begin{align*}
&a\rhd_{\alpha,\,\beta\gamma}(m\lhd_{\beta,\gamma}b)\\
={}&a\rhd_{\alpha,\,\beta\gamma}(m\cdot_{\beta,\gamma} N_\gamma(b)-N_{M,\,\beta\gamma}(m\cdot_{\beta,\gamma} b))\\
={}&N_\alpha(a)\cdot_{\alpha,\,\beta\gamma}\big(m\cdot_{\beta,\gamma} N_\gamma(b)-N_{M,\,\beta\gamma}(m\cdot_{\beta,\gamma} b)\big)-N_{M,\alpha\beta\gamma}\big(a\cdot_{\alpha,\,\beta\gamma}(m\cdot_{\beta,\gamma} N_\gamma(b))\\
&-a\cdot_{\alpha,\,\beta\gamma} N_{M,\,\beta\gamma}(m\cdot_{\beta,\gamma} b)\big)\\
={}&N_\alpha(a)\cdot_{\alpha,\,\beta\gamma}\big(m\cdot_{\beta,\gamma} N_\gamma(b)\big)-\underline{N_\alpha(a)\cdot_{\alpha,\,\beta\gamma}\big(N_{M,\,\beta\gamma}(m\cdot_{\beta,\gamma} b)\big)}-N_{M,\alpha\beta\gamma}\big(a\cdot_{\alpha,\,\beta\gamma}(m\cdot_{\beta,\gamma} N_\gamma(b))\\
&-a\cdot_{\alpha,\,\beta\gamma} N_{M,\,\beta\gamma}(m\cdot_{\beta,\gamma} b)\big)\quad(\text{by Eq.~(\ref{eq:left}) of the underline item})\\
={}&N_\alpha(a)\cdot_{\alpha,\,\beta\gamma} (m\cdot_{\beta,\gamma} N_\gamma(b)-N_{M,\alpha\beta\gamma}\big(a\cdot_{\alpha,\,\beta\gamma} N_{M,\,\beta\gamma}(m\cdot_{\beta,\gamma} b)+N_\alpha(a)\cdot_{\alpha,\,\beta\gamma}(m\cdot_{\beta,\gamma} b)\\
&- N_{M,\alpha\beta\gamma}(a\cdot_{\alpha,\,\beta\gamma} (m\cdot_{\beta,\gamma} b))\big)
-N_{M,\alpha\beta\gamma}\big(a\cdot_{\alpha,\,\beta\gamma}(m\cdot_{\beta,\gamma} N_\gamma(b))\big)+N_{M,\alpha\beta\gamma}\big(a\cdot_{\alpha,\,\beta\gamma} N_{M,\,\beta\gamma}(m\cdot_{\beta,\gamma} b)\big)\\
={}& N_\alpha(a)\cdot_{\alpha,\,\beta\gamma}(m\cdot_{\beta,\gamma} N_\gamma(b))-N_{M,\alpha\beta\gamma}\big(N_\alpha(a)\cdot_{\alpha,\,\beta\gamma} (m\cdot_{\beta,\gamma} b)\big)-N_{M,\alpha\beta\gamma}\big(a\cdot_{\alpha,\,\beta\gamma} (m\cdot_{\beta,\gamma} N_\gamma(b))\big)\\
&+ N^2_{M,\alpha\beta\gamma}\big(a\cdot_{\alpha,\,\beta\gamma} (m\cdot_{\beta,\gamma} b)\big).
\end{align*}
Thus we have
\[(a\rhd_{\alpha,\beta} m)\lhd_{\alpha\beta,\gamma} b=a\rhd_{\alpha,\,\beta\gamma}(m\lhd_{\beta,\gamma}b),\]
that is, operations $\{\rhd_{\alpha,\beta}\} _{\alpha,\,\beta\in\Omega}$ and $\{\lhd_{\alpha,\beta}\} _{\alpha,\,\beta\in\Omega}$ make $M$ into a bimodule over $\Omega$-associative  algebra $(A, \{\star_{\alpha,\beta}\} _{\alpha,\,\beta\in\Omega})$.
Finally, we show that $ M$ is a Nijenhuis family $\Omega$-associative  bimodule over $A_{\star} $. That is, for any $a\in A$ and $m\in M$,
\begin{align*}
 N_\alpha(a)\rhd_{\alpha,\beta}  N_{M,\,\beta}(m)&=N_{M,\alpha\beta}\big(a\rhd_{\alpha,\beta} N_{M,\,\beta}(m)+N_\alpha(a)\rhd_{\alpha,\beta}  m-N_{M,\alpha\beta}( a\rhd_{\alpha,\beta}  m)\big),\\
N_{M,\alpha}(m)\lhd_{\alpha,\beta}  N_\beta(a)&=N_{M,\alpha\beta}\big(m \lhd_{\alpha,\beta}  N_\beta(a)+N_{M,\alpha}(m) \lhd_{\alpha,\beta}  a-N_{M,\alpha\beta}(m\lhd_{\alpha,\beta}  a)\big).
\end{align*}
We only prove the first equality, the second being similar.

In fact,
\begin{align*}
&N_\alpha(a)\rhd_{\alpha,\beta}  N_{M,\,\beta}(m)\\
={}&N_\alpha^2(a)\cdot_{\alpha,\beta} N_{M,\,\beta}(m)-N_{M,\alpha\beta}(N_\alpha(a)\cdot_{\alpha,\beta} N_{M,\,\beta}(m))\\
={}&  N_{M,\alpha\beta} (
N_\alpha^2(a)\cdot_{\alpha,\beta} m+
N_\alpha(a)\cdot_{\alpha,\beta} N_{M,\,\beta}(m) -N_{M,\alpha\beta}(  N_\alpha(a)\cdot_{\alpha,\beta} m) )-N_{M,\alpha\beta} \big(N_\alpha(a)\cdot_{\alpha,\beta} N_{M,\,\beta}(m)\big)  \\
={}& N_{M,\alpha\beta}\big(N_\alpha^2(a)\cdot_{\alpha,\beta} m -N_{M,\alpha\beta}( N_\alpha(a)\cdot_{\alpha,\beta} m)\big).
\end{align*}
Also, we have
\begin{align*}
 & N_{M,\alpha\beta}\left(a\rhd_{\alpha,\beta}  N_{M,\,\beta}(m)+N_\alpha(a)\rhd_{\alpha,\beta}  m-N_{M,\alpha\beta}( a\rhd_{\alpha,\beta}  m)\right),\\
={}&N_{M,\alpha\beta}\bigg( N_\alpha(a)\cdot_{\alpha,\beta} N_{M,\,\beta}(m)-N_{M,\alpha\beta}(a\cdot_{\alpha,\beta} N_{M,\,\beta}(m))+ N_\alpha^2(a)\cdot_{\alpha,\beta} m-N_{M,\alpha\beta}(N_\alpha(a)\cdot_{\alpha,\beta} m)\\
&-N_{M,\alpha\beta}(  N_\alpha(a)\cdot_{\alpha,\beta} m)+ N^2_{M,\alpha\beta}(a\cdot_{\alpha,\beta} m)\bigg)\\
={}&N_{M,\alpha\beta}\Big(N_\alpha(a)\cdot_{\alpha,\beta} N_{M,\,\beta}(m)-N_{M,\alpha\beta}(a\cdot_{\alpha,\beta} N_{M,\,\beta}(m)
+N_\alpha(a)\cdot_{\alpha,\beta} m-N_{M,\alpha\beta}( a\cdot_{\alpha,\beta} m))\\
&+N^2_\alpha(a)\cdot_{\alpha,\beta} m-N_{M,\alpha\beta}(N_\alpha(a)\cdot_{\alpha,\beta} m)\Big)\\
={}& N_{M,\alpha\beta}\big(   N_\alpha^2(a)\cdot_{\alpha,\beta} m -N_{M,\alpha\beta}(  N_\alpha(a)\cdot_{\alpha,\beta} m)\big)\\
={}& N_\alpha(a)\rhd_{\alpha,\beta}  N_{M,\,\beta}(m).\tag*{\qedhere}
\end{align*}
This completes the proof.
\end{proof}

\section{Cohomology of Nijenhuis family $\Omega$-associative algebras}\label{Sec: Cohomology}
In this section, we will introduce a cohomology theory of Nijenhuis family $\Omega$-associative algebras. We will see later that this cohomology theory controls the deformations of Nijenhuis family $\Omega$-associative algebras.
\subsection{Hochschild cohomology of $\Omega$-associative algebras}
Let $M$ be a bimodule over an $\Omega$-associative algebra $A$. Recall that \textit{ the  Hochschild cohomology of  the $\Omega$-associaltive algebra $A$ with  coefficients in $M$ }:  $$(C_{\mathrm{Alg},\Omega}^\bullet(A,M):=\bigoplus\limits_{n=0}^\infty C_{\mathrm{Alg,\Omega}}^n(A,M), \delta_{\Alg}),$$ where $C_{\mathrm{Alg,\Omega}}^n(A,M)=(\mathrm{Hom}_\Omega(A^{\otimes n},M)$ and the differential
$\delta_{\Alg}: C^n_{\mathrm{Alg,\Omega}}(A,M)\rightarrow C^{n+1}_{\mathrm{Alg,\Omega}}(A,M)$  is given by
	\begin{align*}
		\big(\delta_{\Alg} (f) \big)_{\alpha_1, \ldots, \alpha_{n+1}} (a_1, \ldots, a_{n+1} ) =~& (-1)^{n+1}a_1 \cdot_{\alpha_1 , \alpha_2 \dots \alpha_{n+1}} f_{\alpha_2, \ldots, \alpha_{n+1}} (a_2, \ldots, a_{n+1})  \\
		+ \sum_{i=1}^n (-1)^{n-i+1} ~ f_{\alpha_1, \ldots, \alpha_i \alpha_{i+1}, \ldots, \alpha_{n+1}} & \big(  a_1, \ldots, a_{i-1}, a_i \cdot_{\alpha_i,\, \alpha_{i+1}} a_{i+1}, a_{i+2}, \ldots, a_{n+1}  \big) \nonumber \\
		+ ~ f_{\alpha_1, \ldots, \alpha_n} &(a_1, \ldots, a_n) ~\cdot_{\alpha_1 \dots a_n,\, \alpha_{n+1}} a_{n+1}, \nonumber
	\end{align*}
for all $f\in C_\mathrm{Alg,\Omega}^n(A,M), a_1,\dots,a_{n+1}\in A$. The corresponding \textit{ Hochschild cohomology of  the $\Omega$-associaltive algebra $A$ with  coefficients in $M$ } is denoted $\mathrm{HH}_\mathrm{Alg,\Omega}^\bullet(A,M)$. When $M=A$, just denote the Hochschild cochain complex with coefficients in $A$ by $C^\bullet_{\mathrm{Alg,\Omega}}(A)$ and denote the  Hochschild cohomology of the $\Omega$-associaltive algebra by $\mathrm{HH}_\mathrm{Alg,\Omega}^\bullet(A)$.
\medskip

\subsection{Cohomology of Nijenhuis operators family}\
\label{Subsect: cohomology RB operator}
Next, let's introduce the cohomology theory of Nijenhuis operators family. Recall that Proposition~\ref{Prop: new RBF algebra} and Proposition~\ref{prop: new bimodule}, we  give a new
$\Omega$-associative  algebra  $A_{\star} $ and
  a new Nijenhuis family $\Omega$-associative  bimodule  $M$  by the left action $\{\lhd_{\alpha,\,\beta}\} _{\alpha,\,\beta\in\Omega}$ and the right action $\{\rhd_{\alpha,\,\beta}\} _{\alpha,\,\beta\in\Omega}$ over $A_{\star} $.
 Consider the Hochschild cochain complex of $A_{\star} $ with
 coefficients in $ M$:
 $$C^\bullet_{\Alg}(A_{\star} , M)=\bigoplus\limits_{n=0}^\infty C^n_{\Alg}(A_{\star} , M).$$

More precisely, for $n\geqslant 0$, denote by \[C^n_{\Alg}(A_{\star}, {M}):=\Hom_{\Omega}  (A^{\ot n},M)\,\text{ and }\, C^0_{\Alg} (A_{\star} , M) = M,\]
and for $\alpha\in\Omega$, define
\begin{align*}
\partial^0(m)_\alpha(a):=N_\alpha(a)\cdot_{0;\,\alpha,\,1}m
-m\cdot_{\,1,\,\alpha}N_\alpha(a)
-N_{M,\,\alpha}(a\cdot_{0;\,\alpha,\,1}m-m\cdot_{\,1,\,\alpha}a).
\end{align*}
For $\alpha_1,\,\dots,\alpha_n\in\Omega$, denote by
\begin{align*}
&C^{n \geq 1}_{\Alg} (A_{\star} , M) = \big\{  f = \{ f_{\alpha_1, \dots, \alpha_n} \}_{\alpha_1, \dots, \alpha_n \in \Omega} ~|~ f_{\alpha_1, \dots, \alpha_n} : A^{\otimes n} \rightarrow M \text{ is multilinear} \big\},
\end{align*}
 and its differential
 \[\partial^n : C^n_{\Alg} (A_{\star} , M) \rightarrow C^{n+1}_{\Alg}(A_{\star} , M)\]
 is defined by
\begin{align*}
&\big( \partial^n (f) \big)_{\,\alpha_1, \dots, \alpha_{n+1}} (u_1, \dots, u_{n+1}) \nonumber\\
={}& N_{\alpha_1} (u_1) \cdot_{\alpha_1,\alpha_2\dots\alpha_{n+1}} f_{\alpha_2, \dots, \alpha_{n+1}} (u_2, \dots, u_{n+1})\\
&- N_{M,\alpha_1 \dots \alpha_{n+1}} \big( u_1 \cdot_{\alpha_1,\alpha_2\dots\alpha_{n+1}} f_{\alpha_2, \dots, \alpha_{n+1}} (u_2, \dots, u_{n+1})\nonumber\\
&+ \sum_{i=1}^n (-1)^{i} f_{\alpha_1, \dots, \alpha_i \alpha_{i+1}, \dots, \alpha_{n+1}} \Big( u_1, \dots, \Big(N_{\alpha_i} (u_i) \cdot_{\alpha_i,\alpha_{i+1}} u_{i+1} + u_i \cdot_{\alpha_i,\alpha_{i+1}} N_{\alpha_{i+1}} (u_{i+1})\\
&-N_{\alpha_i \alpha_{i+1}} (u_i\cdot_{\alpha_i,\alpha_{i+1}} u_{i+1})\Big),\dots,u_{n+1}\Big) + (-1)^{n+1}\Big(f_{\alpha_1, \dots, \alpha_n} (u_1, \dots, u_n) \cdot_{\alpha_1\dots\alpha_n,\alpha_{n+1}}N_{\alpha_{n+1}} (u_{n+1}) \Big)\\
&-(-1)^{n+1} N_{M,\alpha_1 \dots \alpha_{n+1}} \big(  f_{\alpha_1, \dots, \alpha_n} (u_1, \dots, {u_n) \cdot_{\alpha_1\dots\alpha_n,\alpha_{n+1}}}  u_{n+1} \big)\Big).\nonumber
\end{align*}

\begin{defn}
 	Let $A=(A, \{\mu_{\alpha,\beta}\}_{\alpha,\,\beta\in\Omega}
 ,\{N_\omega\}_{\omega\in\Omega})$ be a Nijenhuis family $\Omega$-associative  algebra  and $M=(M,\{N_{M,\,\omega}\}_{\omega\in\Omega})$ be a Nijenhuis family $\Omega$-associative  bimodule over it. Then the cochain complex $(C^\bullet_\Alg(A_{\star}, M),\partial^n)$ is called \textit{ the cochain complex of Nijenhuis family operator} $\{N_\omega\}_{\omega\in\Omega}$ with coefficients in $(M, \{N_{M,\,\omega}\}_{\omega\in\Omega})$,  denoted by $C_\mathrm{NF}^\bullet(A, M)$. The cohomology of $C_\mathrm{NF}^\bullet(A, $ $M)$, denoted by $\mathrm{H}_\mathrm{NF}^\bullet(A,M)$, are called \textit{ the cohomology of Nijenhuis family  operator} $\{N_\omega\}_{\omega\in\Omega}$ with coefficients in $(M, \{N_{M,\,\omega}\}_{\omega\in\Omega})$.
 \end{defn}

\begin{remark}
When $M=A$, we call $(M, \{N_{M,\,\omega}\}_{\omega\in\Omega})$ the regular Nijenhuis family $\Omega$-associative  bimodule $(A,\{N_\omega\}_{\omega\in\Omega})$.
\end{remark}

\subsection{Cohomology of Nijenhuis family $\Omega$-associative algebras}\
\label{Subsec:chomology RB}
In this subsection, we will combine the Hochschild cohomology of $\Omega$-associative  algebras and the cohomology of Nijenhuis operators family to define a cohomology theory for Nijenhuis family $\Omega$-associative  algebras.

Let $M=(M,\{N_{M,\,\omega}\}_{\omega\in\Omega})$ be a Nijenhuis family $\Omega$-associative  bimodule. Now, let's construct a chain map   $$\Phi^\bullet: C ^\bullet_{\Alg}(A,M) \rightarrow C_\mathrm{NF}^\bullet(A,M),$$ i.e., the following commutative diagram:
\[\xymatrix{
		 C ^0_{\Alg}(A,M)\ar[r]^-{\delta^0}\ar[d]^-{\Phi^0}& C ^1_{\Alg}(A,M)\ar@{.}[r]\ar[d]^-{\Phi^1}& C ^n_{\Alg}(A,M)\ar[r]^-{\delta^n}\ar[d]^-{\Phi^n}& C ^{n+1}_{\Alg}(A,M)\ar[d]^{\Phi^{n+1}}\ar@{.}[r]&\\
		 C ^0_\mathrm{NF}(A,M)\ar[r]^-{\partial^0}& C ^1_\mathrm{NF}(A,M)\ar@{.}[r]& C^n_\mathrm{NF}(A,M)\ar[r]^-{\partial^n}& C^{n+1}_\mathrm{NF}(A,M)\ar@{.}[r]&
.}\]

Define $\Phi^0=\mathrm{id}_{\Hom_{\Omega}(k,M)}=\mathrm{id}_M$. For $n=1$ and $f=\{f_\alpha\}_{\alpha\in\Omega}\in C^1_{\Alg}(A,M)$, define
\begin{equation*}
\Phi^1(f)_\alpha(a):=f_\alpha(N_\alpha(a))-N_{M,\,\alpha}(f_\alpha(a)),\,\text{ for }\,\alpha\in\Omega.
\end{equation*}
For  $n\geq 2$ and $f=\{f_{\alpha_1, \dots, \alpha_n}\}_{\alpha_1,\ldots,\alpha_n\in\Omega}\in  C^n_{\Alg}(A,M)$,
 define
\begin{align*}
 &\Phi^n(f)_{\alpha_1, \dots, \alpha_n}(u_1,\dots,  u_n) \\
&\hspace{2cm}:=\sum_{k=0}^{n}\sum_{1\leqslant i_1<i_2<\dots<i_k\leqslant n}(-1)^{n-k}N^{n-k}_{M,\,{\alpha_1 \dots \alpha_n}}\circ f_{\alpha_1, \dots, \alpha_n}\\
&\hspace{6cm}
(u_{1, i_1-1},\, N_{\alpha_{i_1}}(u_{i_1}), u_{i_1+1, i_2-1}, N_{\alpha_{i_2}}(u_{i_2}), \dots,N_{\alpha_{i_k}}(u_{i_k}), u_{i_k+1, n}).
\end{align*}


\begin{prop}\label{Prop: Chain map Phi}
	The map $\Phi^\bullet: C^\bullet_\Alg(A,M)\rightarrow C^\bullet_\mathrm{NF}(A,M)$ is a chain map.
\end{prop}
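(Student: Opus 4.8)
The plan is to verify the chain-map identity $\Phi^{n+1}\circ\delta_{\Alg}=\partial^n\circ\Phi^n$ by direct expansion on an arbitrary cochain $f\in C^n_{\Alg}(A,M)$ and arguments $u_1,\dots,u_{n+1}\in A$ together with the semigroup parameters $\alpha_1,\dots,\alpha_{n+1}\in\Omega$. First I would dispose of the low-degree cases $n=0$ and $n=1$ by a short hand computation: for $n=0$ one checks that $\partial^0\circ\Phi^0(m)_\alpha(a)=\partial^0(m)_\alpha(a)$ unwinds, using the definitions of $\rhd$, $\lhd$ and $\star$ from Proposition~\ref{prop: new bimodule}, to exactly $(\delta_{\Alg}m)$ composed with the twisting in $\Phi^1$; for $n=1$ one compares the two obviously-quadratic-in-$N$ expressions term by term, invoking the Nijenhuis family relation (Definition~\ref{def:pp}) and the Nijenhuis family bimodule relations~\eqref{eq:left}--\eqref{eq:right} to reconcile the ``cross'' terms. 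These base cases also serve as a sanity check on signs.

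For the general case $n\geq 2$ the strategy is bookkeeping by \emph{subsets}. Expanding $\Phi^{n+1}(\delta_{\Alg}f)$ produces, for each subset $S\subseteq\{1,\dots,n+1\}$ of positions that get $N$-decorated, the term $(-1)^{n+1-|S|}N^{n+1-|S|}_{M}\circ(\delta_{\Alg}f)$ evaluated on the arguments with $N_{\alpha_i}(u_i)$ in the slots $i\in S$; then $\delta_{\Alg}f$ itself splits into the leftmost boundary term, the $n$ inner ``multiplication'' terms, and the rightmost boundary term. On the other side, $\partial^n(\Phi^nf)$ splits the same way into a leftmost term $N_{\alpha_1}(u_1)\cdot f - N_M(u_1\cdot f)$, inner terms carrying the combination $N_{\alpha_i}(u_i)\cdot u_{i+1}+u_i\cdot N_{\alpha_{i+1}}(u_{i+1})-N_{\alpha_i\alpha_{i+1}}(u_i\cdot u_{i+1})$, and a rightmost term; and each occurrence of $\Phi^nf$ again carries its own sum over subsets of $\{1,\dots,\widehat{\,\cdot\,},\dots,n+1\}$. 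The key computation is a matching of these two doubly-indexed sums: one shows that the contribution of a fixed decorating subset $S$ on the $\Phi^{n+1}\delta$ side equals, after using the defining relations to rewrite products of the form $N_{\alpha}(x)\cdot N_{M,\beta}(y)$ and $N_{M}(N_\alpha(x)\cdot y)$, the sum of the contributions on the $\partial\Phi$ side indexed by subsets that restrict to $S$ away from the ``active'' slot. Concretely, when the inner index $i$ is not in $S$, the combination $N_{\alpha_i}(u_i)\cdot u_{i+1}+u_i\cdot N_{\alpha_{i+1}}(u_{i+1})-N_{\alpha_i\alpha_{i+1}}(u_i\cdot u_{i+1})$ in $\partial^n$ reproduces exactly the three ways a subset on the $\delta$ side can interact with the merged slot $\alpha_i\alpha_{i+1}$; when $i\in S$ one uses the Nijenhuis family relation to collapse $N_{\alpha_i}(u_i)\cdot N_{\alpha_{i+1}}(u_{i+1})$. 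The leftmost (resp.\ rightmost) boundary terms are handled identically using \eqref{eq:left} (resp.\ \eqref{eq:right}) together with the defining identities of the new bimodule actions $\rhd,\lhd$, precisely as in the proof of Proposition~\ref{prop: new bimodule} where such underlined terms were repeatedly rewritten.

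The main obstacle I anticipate is purely combinatorial: controlling the interaction between the outer sum over $N$-decorating subsets (the source of the powers $N^{n-k}_M$) and the inner alternating sum of the Hochschild differential, so that signs and multiplicities cancel correctly. The cleanest way to organize this is to prove the identity termwise in the decorating subset: fix the set of slots that receive an $N$ on the left-hand side, track how applying $\delta_{\Alg}$ first versus last changes which slots are decorated and by how many factors of $N_M$, and show the discrepancy is absorbed by exactly one application each of Definition~\ref{def:pp} and Equations~\eqref{eq:left}--\eqref{eq:right}. Once this local cancellation lemma is in place, summing over all subsets gives $\Phi^{n+1}\circ\delta_{\Alg}=\partial^n\circ\Phi^n$, which is the assertion.
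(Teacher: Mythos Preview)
Your proposal is essentially the paper's own argument: both sides are expanded directly, the contributions are organised according to the subset of input slots that receive an $N$-decoration, the Nijenhuis family identity of Definition~\ref{def:pp} is applied to the inner products $N_{\alpha_i}(u_i)\cdot_{\alpha_i,\alpha_{i+1}} N_{\alpha_{i+1}}(u_{i+1})$, and the bimodule identities \eqref{eq:left}--\eqref{eq:right} handle the two extreme (boundary) terms; the paper then labels the resulting pieces $(1)$--$(12)$ and matches them, which is precisely your ``local cancellation'' bookkeeping. One small sharpening: for the boundary pieces the paper in effect uses the \emph{telescoped} consequence $N_\alpha(a)\cdot N_{M}^{\,j}(m)=N_{M}\big(a\cdot N_{M}^{\,j}(m)\big)+N_{M}^{\,j}\big(N_\alpha(a)\cdot m\big)-N_{M}^{\,j+1}(a\cdot m)$ (and its right-hand analogue), obtained by iterating \eqref{eq:left}--\eqref{eq:right}, so ``exactly one application'' should be read as one application of this derived identity rather than of the raw bimodule relation.
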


\begin{proof}
We leave the long proof of this result to Appendix~\ref{Appendix}.
\end{proof}

\begin{defn}\label{defi: cohomology of Nijenhuis family}
 Let $M=(M,\{N_{M,\,\omega}\}_{\omega\in\Omega})$ be a  Nijenhuis family $\Omega$-associative  bimodule.  We define the cochain complex $(C^\bullet_\mathrm{NFA}(A,M), d^\bullet)$  of Nijenhuis family $\Omega$-associative  algebra $(A, \{\mu_{\alpha,\,\beta}\}_{\alpha,\,\beta\in\Omega},\{N_{\omega}\}_{\omega\in\Omega})$ with coefficients in $(M,\{N_{M,\,\omega}\}_{\omega\in\Omega})$, that is,   let
\[ C^0_\mathrm{NFA}(A,M)= C ^0_\Alg(A,M)  \quad  \mathrm{and}\quad    C^n_\mathrm{NFA}(A,M)= C ^n_\Alg(A,M)\oplus C^{n-1}_\mathrm{NF}(A,M),\,\text{ for }\, n\geqslant 1,\]
 and the differential $d^n:  C^n_\mathrm{NFA}(A,M)\rightarrow  C^{n+1}_\mathrm{NFA}(A,M)$ is given by
\begin{align}
&d^n(f, g)_{\alpha_1,\dots,\alpha_{n+1},\,\beta_1,\ldots,\,\beta_n}
(a_1,\dots,a_{n+1})\nonumber\\
&\hspace{3cm}:=\Big(\delta^n(f)_{\alpha_1, \ldots, \alpha_{n+1}},\,
 -\partial^{n-1}(g)_{\beta_1, \ldots,\, \beta_{n}} -\Phi^n(f)_{\beta_1,\, \ldots, \,\beta_n}\Big)\label{eq:diff}
 \end{align}
 for any $f\in  C ^n_\Alg(A,M)$ and $g\in C^{n-1}_\mathrm{NF}(A,M)$.

The  cohomology of $( C^\bullet_\mathrm{NFA}(A,M), d^\bullet)$, denoted by $\mathrm{H}^\bullet_\mathrm{NFA}(A,M)$,  is called \textit{ the cohomology of the Nijenhuis family $\Omega$-associative  algebra} $(A, \{\mu_{\alpha,\,\beta}\}_{\alpha,\,\beta\in\Omega},\{N_\omega\}_{\omega\in\Omega})$ with coefficients in $(M,\{N_{M,\,\omega}\}_{\omega\in\Omega})$.
\end{defn}

\begin{remark}
When $M=A$, we just denote $ C ^\bullet_\mathrm{NFA}(A,A)$, $\mathrm{H}^\bullet_\mathrm{NFA}(A,A)$   by $ C ^\bullet_\mathrm{NFA}(A),  $  $\mathrm{H}^\bullet_\mathrm{NFA}(A)$ respectively, and call  them the cochain complex, the cohomology of Nijenhuis family $\Omega$-associative  algebra  $(A, \{\mu_{\alpha,\,\beta}\}_{\alpha,\,\beta\in\Omega},\{N_{\omega}\}_{\omega\in\Omega})$ respectively.
\end{remark}

By properties of the mapping cone, there is a short exact sequence of cochain complexes:
\begin{eqnarray*} 
	0 \to s^{-1}C^\bullet_{\mathrm{NF}}(A, M) \to C^\bullet_\mathrm{NFA}(A, M) \to C^\bullet_{\Alg}(A, M)\to 0,
\end{eqnarray*}
which induces a long exact sequence of cohomology groups
$$0 \to \rmH^{0}_\mathrm{NFA}(A, M) \to \mathrm{H}_{\Alg}^0(A, M) \to \rmH^0_{\mathrm{NF}}(A, M) \to \rmH^{1}_\mathrm{NFA}(A, M) \to \mathrm{H}_{\Alg}^1(A, M) \to \cdots$$
$$\cdots \to \mathrm{H}_{\Alg}^p(A, M) \to \rmH^p_\mathrm{NF}(A, M) \to \rmH^{p+1}_\mathrm{NFA}(A, M) \to \mathrm{H}_{\Alg}^{p+1}(A, M) \to \cdots$$
Here the symbol $s^{-1}$ represents the desuspension of a cohomologically graded vector space. For a homologically graded space $V$, the suspension of $V$ is the graded space $sV $ with $(sV)_n=V_{n-1}$ for all $n\in \Z$. Write $sv\in (sV)_n$ for $v\in V_{n-1}$. The map $s: V \rightarrow sV$, $v\mapsto sv$ is a graded map of degree $1$.
 Similarly, the desuspension of $V$ , denoted $s^{-1}V$, is the graded space with $(s^{-1}V)_n=V_{n+1}$. Write $s^{-1}v \in (s^{-1}V)_n$ for $v \in V_{n+1}$ and the map $s^{-1}: V \rightarrow s^{-1}V$, $v\mapsto s^{-1}v$ is a graded map of degree $-1$. In the context of cohomologically graded spaces, as used above, the suspension $s$ and desuspension $s^{-1}$ have degree $-1$ and $1$ respectively.
\smallskip
\section{Formal deformations of Nijenhuis family $\Omega$-associative algebras}\ \label{sec:formal deformations}

In this section, we study formal deformations of Nijenhuis family $\Omega$-associative algebras. Let $(A, \mu, N)$ be a Nijenhuis family $\Omega$-associative algebra. We consider the space $A[\![t]\!]$ of formal power series in $t$ with coefficients in $A$. Then $A[\![t]\!]$ is a $\bfk[\![t]\!]$-modules.
\begin{defn}
	Let $(A, \mu, N)$
	be a Nijenhuis family $\Omega$-associative algebra. \textit{ A one-parameter formal deformation} of $(A, \mu, N)$ consists of a pair $(\mu_t, N_t)$ of two formal power series of the form
	\begin{align*}
		\mu_t=\sum_{i\geqslant 0}\mu_it^i,\quad N_t=\sum_{i\geqslant 0}N_it^i,
	\end{align*}
	where $
	\mu_i = \{{\mu_i}_{\alpha, \, \beta}\}_{\alpha, \, \beta \in \Omega} \in \Hom_\Omega(A \ot A , A),
	N_i = \{{N_i}_{\omega}\}_{\omega \in \Omega} \in \Hom_\Omega(A, A)$ for all $i\geqslant 0$ with $\mu_0=\mu$ and $N_0=N$ such that
	$(A[\![t]\!], \mu_t, N_t)$ is a Nijenhuis family $\Omega$-associative algebra over the ring $\bfk[\![t]\!].$
\end{defn}

Power series families $ \mu_{t} $ and
$ N_{t}$
determine a one-parameter formal deformation of Nijenhuis family $\Omega$-associative algebra $(A,\mu,N)$  if and only if
the following equations hold
\begin{eqnarray*}
	\mu_{t,\,\alpha\beta,\,\gamma}(\mu_{t,\,\alpha,\,\beta} \ot \id) &=& \mu_{t,\,\alpha,\,\beta\gamma}( \id \ot \mu_{t,\,\beta,\,\gamma} ),\\
	\mu_{t,\,\alpha,\,\beta}(N_{t,\alpha} \ot N_{t,\,\beta} )&=& N_{t,\alpha\beta}\Big(\mu_{t,\,\alpha,\,\beta}(\id \ot N_{t,\,\beta} )+\mu_{t,\,\alpha,\,\beta}(N_{t,\alpha} \ot \id)-N _{t,\alpha \beta}(\mu_{t,\,\alpha,\,\beta})\Big).
\end{eqnarray*}
Expanding these equations and comparing the coefficient of $t^n$, we obtain  that
$\mu_{i}$ and $N_{i}$
have to  satisfy: for any $n\geqslant 0$,
\begin{equation}\label{Eq: deform eq for  products in RBA}
\sum_{i=0}^n\mu_{i,\,\alpha\beta,\, \gamma}\circ(\mu_{n-i,\,\alpha,\,\beta}\ot \id)
=\sum_{i=0}^n\mu_{i,\,\alpha,\,\beta \gamma}\circ(\id\ot \mu_{n-i,\,\beta,\, \gamma}),\end{equation}
\begin{equation}\label{Eq: Deform RB operator in RBA} \begin{array}{rcl}
	\sum\limits_{i+j+k=n\atop i, j, k\geqslant 0}	\mu_{i,\,\alpha,\,\beta}\circ(N_{j,\,\alpha}\ot N_{k,\,\beta})&=&\sum\limits_{i+j+k=n\atop i, j, k\geqslant 0} N_{i,\,\alpha\beta}\circ \mu_{j,\,\alpha,\,\beta}\circ (\id\ot N_{k,\,\beta})\\
	&  &+\sum\limits_{i+j+k=n\atop i, j, k\geqslant 0} N_{i,\,\alpha\beta}\circ\mu_{j,\,\alpha,\,\beta}\circ (N_{k,\,\alpha}\ot \id)-\sum\limits_{i+j=n\atop i, j \geqslant 0}N^2_{i,\,\alpha\beta}\circ\mu_{j,\,\alpha,\,\beta}.
\end{array}\end{equation}

Obviously, when $n=0$, the above conditions are exactly the associativity of $\mu$ and that $N$ is a family Nijenhuis operator  with respect to $\mu$. The pair $(\mu_1 = \{{\mu_{1}}_{\alpha,\,\beta}\}_{\alpha,\,\beta \in \Omega}, N_1 = \{{N_1}_{\omega}\}_{\omega \in \Omega})$
is called \textit{  the infinitesimal of the one-parameter formal deformation}  $(A[\![t]\!], \mu_t, N_t)$
of  the Nijenhuis family $\Omega$-associative algebra  $(A, \mu, N)$.

\begin{prop}\label{Prop: Infinitesimal is 2-cocyle}
Let
$(A[\![t]\!],\mu_t, N_t)$
be a one-parameter formal deformation of the Nijenhuis family $\Omega$-associative algebra
$(A,\mu,N)$
. Then the infinitesimal
$(\mu_1 = \{{\mu_{1}}_{\alpha,\,\beta}\}_{\alpha,\,\beta \in \Omega}, N_1 = \{{N_1}_{\omega}\}_{\omega \in \Omega})$
is a 2-cocycle in the cochain complex
$\mathcal{C}_\mathrm{NFA}^\bullet(A)$.
\end{prop}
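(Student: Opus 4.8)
The plan is to verify directly that $d^2(\mu_1, N_1) = 0$ in the cochain complex $(C^\bullet_\mathrm{NFA}(A), d^\bullet)$ of Definition~\ref{defi: cohomology of Nijenhuis family}. First note that $\mu_1 \in \Hom_\Omega(A^{\ot 2}, A) = C^2_\Alg(A)$ and $N_1 \in \Hom_\Omega(A, A) = C^1_\mathrm{NF}(A)$, so the pair $(\mu_1, N_1)$ lies in $C^2_\mathrm{NFA}(A) = C^2_\Alg(A) \oplus C^1_\mathrm{NF}(A)$. By the formula \eqref{eq:diff} for the differential,
\[ d^2(\mu_1, N_1) = \big(\, \delta^2(\mu_1),\ -\partial^1(N_1) - \Phi^2(\mu_1) \,\big), \]
so it suffices to establish the two identities \emph{(i)} $\delta^2(\mu_1) = 0$ in $C^3_\Alg(A)$ and \emph{(ii)} $\partial^1(N_1) + \Phi^2(\mu_1) = 0$ in $C^2_\mathrm{NF}(A)$.

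For \emph{(i)}, I would expand the associativity equation \eqref{Eq: deform eq for products in RBA} and read off the coefficient of $t$. Using $\mu_0 = \mu$, the $n = 1$ relation, evaluated on a triple $(a_1, a_2, a_3)$ and rearranged, is exactly
\[ -\,a_1 \cdot_{\alpha_1,\, \alpha_2\alpha_3} \mu_1(a_2, a_3) + \mu_1(a_1 \cdot_{\alpha_1,\,\alpha_2} a_2,\, a_3) - \mu_1(a_1,\, a_2 \cdot_{\alpha_2,\,\alpha_3} a_3) + \mu_1(a_1, a_2) \cdot_{\alpha_1\alpha_2,\,\alpha_3} a_3 = 0, \]
that is, $(\delta_\Alg \mu_1)_{\alpha_1,\alpha_2,\alpha_3}(a_1,a_2,a_3) = 0$ for the Hochschild differential recalled in Section~\ref{Sec: Cohomology} (case $n = 2$, $M = A$). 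This is the classical Gerstenhaber computation carried over to the $\Omega$-graded setting, and it gives $\delta^2(\mu_1) = 0$.

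For \emph{(ii)}, I would expand the Nijenhuis deformation equation \eqref{Eq: Deform RB operator in RBA} and again read off the coefficient of $t$, using $\mu_0 = \mu$ and $N_0 = N$. Subtracting the coefficient of $t^0$ --- which is precisely the Nijenhuis family identity for $(\mu, N)$ --- leaves a relation in which every summand is linear in exactly one of $\mu_1$, $N_1$. Collecting and simplifying the $\mu_1$-linear summands, they assemble into
\[ \mu_1(N_\alpha a,\, N_\beta b) - N_{\alpha\beta}\big(\mu_1(N_\alpha a,\, b)\big) - N_{\alpha\beta}\big(\mu_1(a,\, N_\beta b)\big) + N_{\alpha\beta}^2\big(\mu_1(a, b)\big), \]
which is exactly $\Phi^2(\mu_1)_{\alpha,\beta}(a,b)$ by the $n = 2$ instance of the definition of $\Phi^\bullet$ (with $M = A$, $N_M = N$). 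Collecting and simplifying the $N_1$-linear summands --- this time unfolding the twisted actions $\rhd,\,\lhd$ of Proposition~\ref{prop: new bimodule} through which $\partial^1$ is defined --- they assemble into $\partial^1(N_1)_{\alpha,\beta}(a, b)$, the $n = 1$ instance of the differential of the Nijenhuis-operator complex $C^\bullet_\mathrm{NF}(A)$ with regular coefficients. Hence the coefficient of $t$ in \eqref{Eq: Deform RB operator in RBA} reads $\Phi^2(\mu_1) + \partial^1(N_1) = 0$, which is \emph{(ii)}.

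The only delicate point is the bookkeeping in \emph{(ii)}: one has to track how the single ``degree-$1$'' index is distributed among the (up to three) factors in each summand of \eqref{Eq: Deform RB operator in RBA}, and then match the $(-1)^i$-signs occurring in $\partial^n$ and the $(-1)^{n-k}$-signs occurring in $\Phi^n$ against that expansion. Because $\rhd$ and $\lhd$ are themselves built out of $\mu$ and $N$, the naive Hochschild-type terms already carry part of the $N$-twisting and must be unfolded before the identification closes; everything else is the standard ``the infinitesimal is a $2$-cocycle'' argument. That the cocycle conditions are meaningful, i.e.\ that $d^2 \circ d^1 = 0$, is guaranteed by Proposition~\ref{Prop: Chain map Phi}, since $C^\bullet_\mathrm{NFA}(A)$ is the mapping cone of the chain map $\Phi^\bullet$.
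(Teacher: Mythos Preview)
Your proposal is correct and follows essentially the same route as the paper: extract the coefficient of $t$ from the two deformation equations \eqref{Eq: deform eq for  products in RBA} and \eqref{Eq: Deform RB operator in RBA}, and identify the resulting identities with $\delta^2(\mu_1)=0$ and $\Phi^2(\mu_1)+\partial^1(N_1)=0$, respectively. The paper presents this identification more tersely, simply writing out the $n=1$ instances and asserting the match, whereas you spell out how the $\mu_1$-linear and $N_1$-linear summands group into $\Phi^2(\mu_1)$ and $\partial^1(N_1)$; both arguments are the standard ``infinitesimal is a 2-cocycle'' computation transported to this setting.
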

\begin{proof} When $n=1$, Eqs.~(\ref{Eq: deform eq for  products in RBA}) and (\ref{Eq: Deform RB operator in RBA})  become
	\[\begin{array}{rcl}
		\mu_{1,\,\alpha\beta,\,\gamma}\circ(\mu_{\alpha,\,\beta}\ot \id)
		+\mu_{\,\alpha\beta,\,\gamma}\circ(\mu_{1,\,\alpha,\,\beta}\ot \id)
		&=&  \mu_{1,\,\alpha,\,\beta \gamma}\circ(\id\ot \mu_{\beta,\, \gamma})
		+\mu_{\,\alpha,\,\beta \gamma}\circ (\id\ot \mu_{1,\,\beta,\, \gamma}),
	\end{array}\]
	and
	\[\begin{array}{cl}
		&\mu_{1,\,\alpha,\,\beta} (N_\alpha\ot N_\beta)
		-\left(N_{\alpha\beta}\circ\mu_{1,\,\alpha,\,\beta}\circ(\id\ot N_\beta)
		+N_{\alpha\beta}\circ\mu_{1,\,\alpha,\,\beta}\circ(N_\alpha\ot \id)
		- N^2_{\alpha\beta}\circ \mu_{1,\,\alpha,\,\beta}\right)\\
		=&
		-\left(\mu_{\alpha,\,\beta}\circ(N_\alpha\ot N_{1,\,\beta})-N_{\alpha\beta}\circ\mu_{\alpha,\,\beta}\circ(\id\ot N_{1,\,\beta})\right)
		-\left(\mu_{\alpha,\,\beta}\circ(N_{1,\,\alpha}\ot N_\beta)-N_{\alpha\beta}\circ\mu_{\alpha,\,\beta}\circ(N_{1,\,\alpha}\ot\id)\right)\\
		&+\left(N_{1,\alpha\beta}\circ\mu_{\alpha,\,\beta}\circ(\id\ot N_\beta)+N_{1,\,\alpha\beta}\circ\mu_{\alpha,\,\beta}\circ(N_\alpha\ot \id)-N^2_{1,\,\alpha\beta}\circ \mu_{\alpha,\,\beta}\right).
	\end{array}\]
	
	Note that  the first equation is exactly
	$\delta_{\rm{Alg}}(\mu_{1})_{\alpha,\,\beta,\,\gamma}=0$, then $\delta_{\rm{Alg}}(\mu_{1})=0\in \mathcal{C}^\bullet_{\Alg}(A)$,
	and the second equation is equivalent to
	$ \Phi(\mu_{1})_{\alpha,\,\beta}=-\partial_{\mathrm{NF}}(N_{1})_{\alpha,\,\beta} $,
	then
	$ \Phi(\mu_{1})=-\partial_\mathrm{NF}(N_{1}) \in \mathcal{C}^\bullet_\mathrm{NF}(A). $
	So $(\mu_1, N_1)$
	is a 2-cocycle in $\mathcal{C}^\bullet_\mathrm{NFA}(A)$.
\end{proof}

\begin{defn}
	Let $(A[\![t]\!], \mu_t, N_t)$ and $(A[\![t]\!], \mu'_t, N'_t)$
	 be two one-parameter formal deformations of Nijenhuis family $\Omega$-associative algebra $(A, \mu, N)$
. \textit{ A formal isomorphism} from $(A[\![t]\!], \mu'_t, N'_t)$
	to $(A[\![t]\!], \mu_t, N_t)$
	is a family of power series $ \psi_{t} = \{\psi_{t,\theta}\}_{\theta \in \Omega}$, where $\psi_{t,\,\theta}=\sum_{i=0}\psi_{i,\,\theta}t^i: A[\![t]\!]\rightarrow A[\![t]\!]$
	and $\psi_{i,\,\theta}: A\rightarrow A$ are linear maps with $\psi_{0,\,\theta}=\id_A$, such that:
	\begin{eqnarray}\label{Eq: equivalent deformations}
		\psi_{t,\,\alpha\beta}\circ \mu_{t,\,\alpha,\,\beta}' &=& \mu_{t,\,\alpha,\,\beta}\circ (\psi_{t,\,\alpha}\ot \psi_{t,\,\beta}),\\
		\psi_{t,\,\omega}\circ N_{t,\,\omega}'&=&N_{t,\,\omega}\circ\psi_{t,\,\omega}. \label{Eq: equivalent deformations2}
	\end{eqnarray}
	In this case, we say that the two one-parameter formal deformations $(A[\![t]\!], \mu_t, N_t)$
	and $(A[\![t]\!], \mu'_t, N'_t)$
	are \textit{ equivalent}.
\end{defn}

\smallskip

Given a Nijenhuis family $\Omega$-associative algebra $(A, \mu, N)$  
, the power series
$\mu_t$ with $\mu_{i}=\delta^{i}_{0} \mu$
and
$N_t$ with $N_{i}=\delta^{i}_{0} N$
make
$(A[\![t]\!], \mu_t, N_t)$
into a one-parameter formal deformation of $(A, \mu, N)$,
where $ \delta^{i}_{j} $
is the  Kronecker delta symbol. Formal deformations equivalent to this one are called \textit{ trivial}.
\smallskip

\begin{theorem}
	The infinitesimals of two equivalent one-parameter formal deformations of $(A, \mu, N)$
	are in the same cohomology class in $\rmH^\bullet_\mathrm{NFA}(A)$.
\end{theorem}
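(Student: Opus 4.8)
The plan is to take a formal isomorphism $\psi_t = \{\psi_{t,\theta}\}_{\theta\in\Omega}$ from one deformation $(A[\![t]\!],\mu'_t,N'_t)$ to another $(A[\![t]\!],\mu_t,N_t)$, write $\psi_{t,\theta} = \mathrm{id}_A + \psi_{1,\theta}t + O(t^2)$, and expand the two compatibility equations \eqref{Eq: equivalent deformations} and \eqref{Eq: equivalent deformations2} to first order in $t$. The coefficient of $t^1$ in \eqref{Eq: equivalent deformations} reads
\[
\psi_{1,\alpha\beta}\circ\mu_{\alpha,\beta} + \mu'_{1,\alpha,\beta} = \mu_{1,\alpha,\beta} + \mu_{\alpha,\beta}\circ(\psi_{1,\alpha}\ot\id) + \mu_{\alpha,\beta}\circ(\id\ot\psi_{1,\beta}),
\]
so that $\mu'_{1,\alpha,\beta} - \mu_{1,\alpha,\beta} = \big(\mu_{\alpha,\beta}\circ(\psi_{1,\alpha}\ot\id) + \mu_{\alpha,\beta}\circ(\id\ot\psi_{1,\beta}) - \psi_{1,\alpha\beta}\circ\mu_{\alpha,\beta}\big)$; I would identify the right-hand side with $\delta^1_{\Alg}(\psi_1)_{\alpha,\beta}$ (up to the sign convention in the definition of $\delta_{\Alg}$, checking that carefully on the three terms of the Hochschild differential in degree $1$). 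Similarly the $t^1$-coefficient of \eqref{Eq: equivalent deformations2} gives
\[
N'_{1,\omega} - N_{1,\omega} = N_\omega\circ\psi_{1,\omega} - \psi_{1,\omega}\circ N_\omega,
\]
which I want to recognize as exactly $-\Phi^1(\psi_1)_\omega$ evaluated via the formula $\Phi^1(f)_\alpha(a)=f_\alpha(N_\alpha(a))-N_{M,\alpha}(f_\alpha(a))$ applied to $f=\psi_1\in C^1_{\Alg}(A)$ in the regular bimodule (here $N_{M,\omega}=N_\omega$), so this reads $N'_{1,\omega}-N_{1,\omega} = \Phi^1(\psi_1)_\omega$ with an appropriate sign.

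Next I would assemble these two identities into a single statement in the mapping-cone complex $C^\bullet_\mathrm{NFA}(A)$: recalling $C^2_\mathrm{NFA}(A) = C^2_\Alg(A)\oplus C^1_\mathrm{NF}(A)$ and that the infinitesimals $(\mu_1,N_1)$ and $(\mu'_1,N'_1)$ are $2$-cocycles by Proposition~\ref{Prop: Infinitesimal is 2-cocyle}, I claim
\[
(\mu'_1,N'_1) - (\mu_1,N_1) = d^1(\psi_1),
\]
where $d^1\colon C^1_\mathrm{NFA}(A)=C^1_\Alg(A)\to C^2_\mathrm{NFA}(A)$ is the differential from \eqref{eq:diff}, namely $d^1(f) = \big(\delta^1(f),\, -\Phi^1(f)\big)$ (the $\partial^0(g)$-term is absent since $C^0_\mathrm{NF}$ does not contribute in this degree). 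Matching the first coordinate uses the $t^1$-expansion of \eqref{Eq: equivalent deformations} computed above, and matching the second coordinate uses the $t^1$-expansion of \eqref{Eq: equivalent deformations2}; once the sign conventions line up this is immediate. Hence $(\mu'_1,N'_1)$ and $(\mu_1,N_1)$ differ by a coboundary, so they represent the same class in $\rmH^2_\mathrm{NFA}(A)$.

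The only genuinely delicate point is bookkeeping of signs: the Hochschild differential $\delta_{\Alg}$ as written in the paper carries the factors $(-1)^{n-i+1}$ and $(-1)^{n+1}$, the cone differential $d^n$ in \eqref{eq:diff} introduces a further sign on the $\partial$- and $\Phi$-parts, and the chain map $\Phi^1$ has its own sign; I would fix one consistent convention and verify that the $t^1$-expansions of \eqref{Eq: equivalent deformations}--\eqref{Eq: equivalent deformations2} produce precisely $d^1(\psi_1)$ and not $-d^1(\psi_1)$ or some twisted variant. Everything else is a routine degree-$1$ expansion of the two defining equations for a formal isomorphism, so no conceptual obstacle arises beyond this sign check. (One could alternatively absorb the whole verification into the higher-order Gerstenhaber-bracket formalism, but the direct first-order computation is shorter and self-contained given the results already established.)
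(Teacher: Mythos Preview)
Your proposal is correct and follows essentially the same route as the paper: expand the formal-isomorphism identities \eqref{Eq: equivalent deformations}--\eqref{Eq: equivalent deformations2} to first order in $t$, and recognize the resulting difference $(\mu'_1,N'_1)-(\mu_1,N_1)$ as $\big(\delta^1_{\Alg}(\psi_1),\,-\Phi^1(\psi_1)\big)=d^1(\psi_1,0)$. One small correction: by Definition~\ref{defi: cohomology of Nijenhuis family} one has $C^1_\mathrm{NFA}(A)=C^1_\Alg(A)\oplus C^0_\mathrm{NF}(A)$, so the relevant $1$-cochain is the pair $(\psi_1,0)$ rather than $\psi_1$ alone---the $\partial^0(g)$-term vanishes because $g=0$, not because $C^0_\mathrm{NF}$ is absent in this degree.
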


\begin{proof}
	Let $ \psi : (A[\![t]\!], \mu'_t, N'_t) \rightarrow  (A[\![t]\!], \mu_t, N_t)$	
	be a formal isomorphism.
	Expanding the identities and collecting coefficients of $t$, we get from Eqs.~(\ref{Eq: equivalent deformations}) and (\ref{Eq: equivalent deformations2})
	\begin{eqnarray*}
		\mu_{1,\,\alpha,\,\beta}'&=&\mu_{1,\,\alpha,\,\beta}+\mu_{\alpha,\,\beta}\circ(\id\ot \psi_{1,\,\beta})-\psi_{1,\,\alpha\beta}\circ\mu_{\alpha,\,\beta}+\mu_{\alpha,\,\beta}\circ(\psi_{1,\,\alpha}\ot \id),\\
		N'_{1,\,\omega}&=&N_{1,\,\omega}+N_\omega\circ\psi_{1,\,\omega}-\psi_{1,\,\omega}\circ N_\omega,
	\end{eqnarray*}
	that is, we have\[(\mu_{1,\,\alpha,\,\beta}',N'_{1,\,\omega})-(\mu_{1,\,\alpha,\,\beta},N_{1,\,\omega})
	=({\delta_{\rm{Alg}}(\psi_1)}_{\alpha,\,\beta}, -{\Phi(\psi_1)}_{\omega})=\partial_{\rm{RBA}_\lambda}(\psi_1,0)_{\alpha,\,\beta,\,\omega}
	,\]
	hence $ (\mu_{1}',N'_{1})-(\mu_{1},N_{1})
	=\partial_{\mathrm{NFA}}(\psi_1,0)\in  \mathcal{C}^\bullet_\mathrm{NFA}(A). $
\end{proof}

\begin{defn}
	A Nijenhuis family $\Omega$-associative algebra $ (A, \mu, N) $ 
	 is said to be \textit{ rigid} if its arbitrary one-parameter formal deformation is trivial.
\end{defn}

\begin{theorem}\label{thm: deformation of Nijenhuis family}
	Let  $ (A, \mu, N) $ be a Nijenhuis family $\Omega$-associative algebra. If  $\mathrm{H}^2_\mathrm{NFA}(A)=0$, then $ (A, \mu, N) $ is rigid.
\end{theorem}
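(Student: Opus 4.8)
The plan is to run the standard ``obstruction vanishes'' argument: take an arbitrary one-parameter formal deformation $(A[\![t]\!],\mu_t,N_t)$ of $(A,\mu,N)$ and show, by induction on the order of the first nontrivial term, that it is equivalent to the trivial deformation. Concretely, suppose $\mu_t=\mu+\sum_{i\geqslant n}\mu_i t^i$ and $N_t=N+\sum_{i\geqslant n}N_i t^i$ for some $n\geqslant 1$ (the lowest-order terms beyond $\mu,N$ all appearing in degree $n$). First I would verify that the pair $(\mu_n,N_n)$ is a $2$-cocycle in $\mathcal{C}^\bullet_\mathrm{NFA}(A)$: collecting the coefficient of $t^n$ in Eqs.~(\ref{Eq: deform eq for  products in RBA}) and (\ref{Eq: Deform RB operator in RBA}), all the cross terms $\mu_i\cdot\mu_{n-i}$, $N_i\cdots N_k$ with $0<i<n$ vanish because $\mu_i=N_i=0$ for $0<i<n$, leaving exactly the equations $\delta_{\Alg}(\mu_n)=0$ and $\Phi^2(\mu_n)=-\partial^1_\mathrm{NF}(N_n)$, i.e.\ $d^2(\mu_n,N_n)=0$ by the formula (\ref{eq:diff}) for the differential of $\mathcal{C}^\bullet_\mathrm{NFA}(A)$. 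This is the same computation as in Proposition~\ref{Prop: Infinitesimal is 2-cocyle}, just applied in degree $n$ instead of degree $1$.

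Next, since $\mathrm{H}^2_\mathrm{NFA}(A)=0$, the cocycle $(\mu_n,N_n)$ is a coboundary: there exists $(\phi,\chi)\in C^2_\mathrm{NFA}(A)=C^2_\Alg(A)\oplus C^1_\mathrm{NF}(A)$ — actually we only need a $1$-cochain, so write $(\mu_n,N_n)=d^1(\psi_n,0)$ for some $\psi_n=\{\psi_{n,\theta}\}_{\theta\in\Omega}\in C^1_\Alg(A)=\Hom_\Omega(A,A)$; unwinding $d^1$ this says $\mu_{n,\alpha,\beta}=\delta_{\Alg}(\psi_n)_{\alpha,\beta}$ and $N_{n,\omega}=-\Phi^1(\psi_n)_\omega=N_{M,\omega}(\psi_{n,\omega})-\psi_{n,\omega}\circ N_\omega$ (here $M=A$). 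Then I would set $\psi_t=\id_A+\psi_n t^n$ and define a new deformation $(\mu'_t,N'_t)$ by $\mu'_{t,\alpha,\beta}=\psi_{t,\alpha\beta}^{-1}\circ\mu_{t,\alpha,\beta}\circ(\psi_{t,\alpha}\ot\psi_{t,\beta})$ and $N'_{t,\omega}=\psi_{t,\omega}^{-1}\circ N_{t,\omega}\circ\psi_{t,\omega}$, noting $\psi_{t,\theta}^{-1}=\id_A-\psi_{n,\theta}t^n+O(t^{n+1})$. By construction $(\mu'_t,N'_t)$ is equivalent to $(\mu_t,N_t)$, and expanding to order $t^n$ one finds $\mu'_{n,\alpha,\beta}=\mu_{n,\alpha,\beta}-\delta_{\Alg}(\psi_n)_{\alpha,\beta}=0$ and $N'_{n,\omega}=N_{n,\omega}+\Phi^1(\psi_n)_\omega=0$ (the sign bookkeeping being exactly that in the proof of the previous theorem on equivalent infinitesimals). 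Hence $(\mu'_t,N'_t)=\mu+N+\sum_{i\geqslant n+1}(\cdots)t^i$, so the first nontrivial term has been pushed up to order $n+1$.

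Iterating this procedure produces a sequence of formal isomorphisms whose composite is well-defined as a formal power series (each newly introduced term is of strictly higher order, so the infinite composition converges $t$-adically), and it carries $(\mu_t,N_t)$ to the trivial deformation $(\mu,N)$. Therefore every one-parameter formal deformation of $(A,\mu,N)$ is trivial, i.e.\ $(A,\mu,N)$ is rigid.

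I expect the main obstacle to be purely bookkeeping: checking that the degree-$n$ coefficient of the two deformation equations really collapses to $d^2(\mu_n,N_n)=0$ with the correct signs (the $\Phi^n$ term carries the $(-1)^{n+1}$ and the $N^2_{M}$ contributions that must be matched against $\partial^{n-1}_\mathrm{NF}$), and symmetrically that the ``gauge transformation'' $\psi_t=\id+\psi_n t^n$ changes $(\mu_n,N_n)$ by exactly $d^1(\psi_n,0)$ and not by something off by a sign or an extra term. Conceptually nothing new is needed beyond Proposition~\ref{Prop: Infinitesimal is 2-cocyle} and the equivalent-infinitesimals theorem already proved; the argument is the Gerstenhaber-style rigidity criterion transported verbatim to the complex $\mathcal{C}^\bullet_\mathrm{NFA}(A)$. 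One should also remark that only $1$-cochains of the form $(\psi_n,0)$ are used, which is why the vanishing of $\mathrm{H}^2_\mathrm{NFA}(A)$ (rather than some relative group) suffices.
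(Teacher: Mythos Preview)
Your proposal is correct and follows essentially the same approach as the paper's proof: both run the standard Gerstenhaber rigidity argument, using that the lowest-order nontrivial pair $(\mu_n,N_n)$ is a $2$-cocycle, killing it by a gauge transformation $\psi_t=\id\pm\psi_n t^n$ coming from $\mathrm{H}^2_\mathrm{NFA}(A)=0$, and iterating. The paper phrases the induction starting from $n=1$ and is slightly terser, but the argument is the same; your added remark about $t$-adic convergence of the infinite composition and about restricting to $1$-cochains of the form $(\psi_n,0)$ is justified (the latter because $\partial^0=\Phi^1\circ\delta^0$, so any $C^0_{\mathrm{NF}}$-component of a bounding $1$-cochain can be absorbed into the $C^1_{\Alg}$-component).
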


\begin{proof}Let $(A[\![t]\!], \mu_t, N_t)$
	 be a one-parameter formal deformation.
	By Proposition~\ref{Prop: Infinitesimal is 2-cocyle},
	$(\mu_1, N_1)$
	is a $2$-cocycle. By $\mathrm{H}^2_\mathrm{NFA}(A)=0$, there exists a $1$-cochain
	$(\psi_{1}, 0) \in \mathcal{C}^1_\mathrm{NFA}(A)= \mathcal{C}^1_{\Alg}(A)$
	such that $ (\mu_{1}, N_{1}) =  \partial_\mathrm{NFA}(\psi_1, 0) $,
	that is,
	\[\mu_{1,\,\alpha,\,\beta}=\delta_{\rm{Alg}}(\psi_1)_{\alpha,\,\beta},\, \text{ and }\, N_{1,\,\omega}=-{\Phi(\psi_1)}_\omega.\]
	
	Setting $\psi_{t} = \mathrm{id}_A -\psi_{1}t$
	, we have a deformation
	$(A[\![t]\!], \overline \mu_{t}, \lbar{N}_{t})$
	, where
	$$\overline{\mu}_{t,\,\alpha,\,\beta}=\psi^{-1}_{t,\,\alpha\beta}\circ \mu_{t,\,\alpha,\,\beta}\circ (\psi_{t,\,\alpha}\otimes \psi_{t,\,\beta})$$
	and $$\lbar{N}_{t,\,\omega}=\psi^{-1}_{t,\,\omega}\circ N_{t,\,\omega}\circ \psi_{t,\,\omega}.$$
	It can be easily verify  that $\overline{\mu}_{1,\,\alpha,\,\beta}=0$ and $\overline{N}_{1,\,\omega}=0$. Then
	$$\begin{array}{rcl} \overline{\mu}_{t,\,\alpha,\,\beta}&=& \mu_{\alpha,\,\beta}+\overline{\mu}_{2,\,\alpha,\,\beta}t^2+\dots,\\
		\lbar{N}_{t,\,\omega}&=& N_\omega+\overline{N}_{2,\,\omega}t^2+\dots.\end{array}$$
	By Eqs.~(\ref{Eq: deform eq for  products in RBA}) and (\ref{Eq: Deform RB operator in RBA}),
	We see that
	$ (\overline \mu_{2}, \lbar{N}_{2}) $
	is still a $2$-cocyle. By induction, we can show that $(A[\![t]\!], \mu_t, N_t)$
	is equivalent to the trivial extension $(A[\![t]\!], \mu, N)$.
	Thus, $(A, \mu, N)$
	is rigid.
\end{proof}
\section{Abelian extensions of Nijenhuis family $\Omega$-associative  algebras}
\label{sec:abelian extension}

 In this section, we study abelian extensions of Nijenhuis family $\Omega$-associative  algebras and show that they are classified by the second cohomology, as one would expect of a good cohomology theory.

 Notice that a vector space $M$ equipped with a family of linear transformations $\{N_{M,\,\omega}\}_{\omega\in\Omega}$ with $N_{M,\,\omega}:M\to M$ is naturally a Nijenhuis family $\Omega$-associative  algebras where the multiplication on $M$ is defined to be $u\cdot_{\alpha,\,\beta}v=0$ for all $u,v\in M$ and $\alpha,\,\beta\in\Omega.$

 \begin{defn}
 	\textit{ An  abelian extension} of Nijenhuis family $\Omega$-associative  algebras is a short exact sequence of  morphisms of Nijenhuis family $\Omega$-associative  algebras
 \begin{eqnarray}\label{Eq: abelian extension}
 0\to (M, \{N_{M,\,\omega}\}_{\omega\in\Omega})\xrightarrow{\{i_\omega\}_{\omega\in\Omega}} (\hat{A}, \{\hat{N}_\omega\}_{\omega\in\Omega})\xrightarrow{\{N_\omega\}_{\omega\in\Omega}} (A, \{N_\omega\}_{\omega\in\Omega})\to 0,
 \end{eqnarray}
 that is, there exists a commutative diagram:
 	\[\begin{CD}
 		0@>>> {M} @>i_\omega >> \hat{A} @>N_\omega >> A @>>>0\\
 		@. @V {N_{M,\,\omega}} VV @V {\hat{N}_\omega} VV @V N_\omega VV @.\\
 		0@>>> {M} @>i_\omega >> \hat{A} @>N_\omega >> A @>>>0,
 	\end{CD}\]
 where the Nijenhuis family $\Omega$-associative  algebra $(M, \{N_{M,\,\omega}\}_{\omega\in\Omega})$	satisfies  $u\cdot_{\alpha,\,\beta}v=0$ for all $u,v\in M.$

 We will call $(\hat{A},\{\hat{N}_\omega\}_{\omega\in\Omega})$ an abelian extension of $(A,\{N_\omega\}_{\omega\in\Omega})$ by $(M,\{N_{M,\,\omega}\}_{\omega\in\Omega})$.
 \end{defn}

 \begin{defn}
 	Let $(\hat{A}_1,\{\hat{N}^1_{\omega}\}_{\omega\in\Omega})$ and $(\hat{A}_2,\{\hat{N}^2_{\omega}\}_{\omega\in\Omega})$ be two abelian extensions of $(A,\{N_\omega\}_{\omega\in\Omega})$ by $(M, \{N_{M,\,\omega}\}_{\omega\in\Omega})$. They are said to be \textit{ isomorphic}  if there exists an isomorphism of Nijenhuis family $\Omega$-asssociative  algebras $\{\zeta_\omega\}_{\omega\in\Omega}:(\hat{A}_1,\{\hat{N}^1_{\omega}\}_{\omega\in\Omega})\rar (\hat{A}_2,\{\hat{N}^2_{\omega}\}_{\omega\in\Omega})$ such that the following commutative diagram holds:
 	\begin{eqnarray}\label{Eq: isom of abelian extension}\begin{CD}
 		0@>>> {(M,N_M)} @>i_\omega >> (\hat{A}_1,\{\hat{N}^1_{\omega}\}_{\omega\in\Omega}) @>N^1_\omega >> (A,\{N_\omega\}_{\omega\in\Omega}) @>>>0\\
 		@. @| @V \zeta_\omega VV @| @.\\
 		0@>>> {(M,N_M)} @>i_\omega >> (\hat{A}_2,\{\hat{N}^2_{\omega}\}_{\omega\in\Omega}) @>N^2_\omega >> (A,\{N_\omega\}_{\omega\in\Omega}) @>>>0.
 	\end{CD}\end{eqnarray}
 \end{defn}

 A   section of an abelian extension $(\hat{A},\{\hat{N}_\omega\}_{\omega\in\Omega})$ of $(A,\{\hat{N}_\omega\}_{\omega\in\Omega})$ by $(M,\{N_{M,\,\omega}\}_{\omega\in\Omega})$ is a linear map $s_\omega:A\rar \hat{A}$ such that $N_\omega\circ s_\omega=\mathrm{id}_A$.

 We will show that isomorphism classes of  abelian extensions of $(A,\{\hat{N}_\omega\}_{\omega\in\Omega})$ by $(M,\{N_{M,\,\omega}\}_{\omega\in\Omega})$ are in bijection with the second cohomology group   ${\rmH}_\mathrm{NFA}^2(A,M)$.


Let  $(\hat{A},\{\hat{N}_\omega\}_{\omega\in\Omega})$ be  an abelian extension of $(A,\{N_\omega\}_{\omega\in\Omega})$ by $(M,\{N_{M,\,\omega}\}_{\omega\in\Omega})$ having the form Eq.~\eqref{Eq: abelian extension}.

 \begin{prop}\label{Prop: new RB bimodule from abelian extensions}
 	Let $\{s_\omega\}_{\omega\in\Omega}:A\rar \hat{A}$ be a family of sections. Define
 \[
 a\cdot_{\alpha,\,\beta}m:=s_\alpha(a)\cdot_{\alpha,\,\beta}m,\quad m\cdot_{\alpha,\,\beta}a:=m\cdot_{\alpha,\,\beta}s_\beta(a), \quad \text{ for }\, a\in A, m\in M,\,\alpha,\,\beta\in\Omega.\]
 Then $(M, \{N_{M,\,\omega}\}_{\omega\in\Omega})$ is a Nijenhuis family $\Omega$-associative  bimodule on $(A, \{\mu_{\alpha,\,\beta}\}_{\alpha,\,\beta\in\Omega},$ $\,\{N_{\omega}\}_{\omega\in\Omega})$.
 \end{prop}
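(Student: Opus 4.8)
The plan is to verify that the two families $\{l_{\alpha,\beta}\}$ and $\{r_{\alpha,\beta}\}$ defined via a fixed choice of sections $\{s_\omega\}_{\omega\in\Omega}$ actually equip $M$ with the structure of a Nijenhuis family $\Omega$-associative bimodule over $(A,\{\mu_{\alpha,\beta}\},\{N_\omega\})$; this breaks into two halves: first, that $M$ is a bimodule over the $\Omega$-associative algebra $(A,\{\mu_{\alpha,\beta}\})$ in the sense of Definition~\ref{defn:associative  algebra}, and second, that $\{N_{M,\omega}\}_{\omega\in\Omega}$ satisfies the two compatibility equations \eqref{eq:left} and \eqref{eq:right}. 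Throughout I would identify $M$ with its image $i_\omega(M)\subseteq\hat A$ (note $i_\omega$ is independent of $\omega$ up to the diagram, and $M$ is an ideal since the sequence is exact and $M$ is abelian), so that the operations $a\cdot_{\alpha,\beta}m:=s_\alpha(a)\cdot_{\alpha,\beta}m$ and $m\cdot_{\alpha,\beta}a:=m\cdot_{\alpha,\beta}s_\beta(a)$ make sense as operations in $\hat A$ landing back in $M$.

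First I would check well-definedness/independence of the section: if $s'_\omega$ is another section, then $s'_\alpha(a)-s_\alpha(a)\in M$, and since $M\cdot_{\alpha,\beta}M=0$ in $\hat A$ (the abelian hypothesis), we get $s'_\alpha(a)\cdot_{\alpha,\beta}m=s_\alpha(a)\cdot_{\alpha,\beta}m$; so the bimodule structure does not depend on the chosen section. Then the three associativity-type axioms of Definition~\ref{defn:associative  algebra} follow by expanding, e.g. $(a\cdot_{\alpha,\beta}b)\cdot_{\alpha\beta,\gamma}m = s_{\alpha\beta}(ab)\cdot_{\alpha\beta,\gamma}m$ versus $a\cdot_{\alpha,\beta\gamma}(b\cdot_{\beta,\gamma}m)=s_\alpha(a)\cdot_{\alpha,\beta\gamma}(s_\beta(b)\cdot_{\beta,\gamma}m)$; using associativity of $\hat A$ this reduces to $\big(s_{\alpha\beta}(ab)-s_\alpha(a)\cdot_{\alpha,\beta}s_\beta(b)\big)\cdot_{\alpha\beta,\gamma}m=0$, which holds because the displayed difference lies in $M$ (as $N_\omega\circ s_\omega=\mathrm{id}$ makes it map to $0$ in $A$), and $M$ annihilates $M$. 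The other two mixed associativity axioms are handled identically.

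For the Nijenhuis compatibility, I would use that $\{\hat N_\omega\}$ is a Nijenhuis family on $\hat A$ and that the diagram forces $\hat N_\omega\circ i_\omega=i_\omega\circ N_{M,\omega}$ and $N_\omega\circ\hat N_\omega = N_\omega\circ N_\omega$ (reading the right square), so in particular $\hat N_\omega(s_\omega(a)) - s_\omega(N_\omega(a))\in M$ for each $a$. Writing $N_\alpha(a)\cdot_{\alpha,\beta}N_{M,\beta}(m)$ in $\hat A$ as $s_\alpha(N_\alpha(a))\cdot_{\alpha,\beta}\hat N_\beta(m)$ and comparing with $\hat N_\alpha(s_\alpha(a))\cdot_{\alpha,\beta}\hat N_\beta(m)$ — the two differ by an element of $M\cdot_{\alpha,\beta}M=0$ — I can replace $s_\alpha(N_\alpha(a))$ by $\hat N_\alpha(s_\alpha(a))$ at will, and then \eqref{eq:left} for $M$ follows directly from the Nijenhuis family identity in $\hat A$ applied to $s_\alpha(a)$ and $m$, after again discarding $M$-by-$M$ products. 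The argument for \eqref{eq:right} is the mirror image.

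The main obstacle, such as it is, is purely bookkeeping: keeping straight which terms lie in $M$ (and hence annihilate each other or vanish after applying $N_\omega$) and systematically exploiting the three facts $M\cdot_{\alpha,\beta}M=0$, $s_{\alpha\beta}(ab)\equiv s_\alpha(a)\cdot_{\alpha,\beta}s_\beta(b)\pmod M$, and $\hat N_\omega s_\omega(a)\equiv s_\omega(N_\omega(a))\pmod M$. No genuinely new idea is needed beyond Proposition~\ref{prop: new bimodule}-style direct verification; the computation is routine once the identification of $M$ with an abelian ideal of $\hat A$ is in place.
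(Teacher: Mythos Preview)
Your proposal is correct and follows essentially the same route as the paper: both exploit the two key facts $s_{\alpha\beta}(a\cdot_{\alpha,\beta}b)-s_\alpha(a)\cdot_{\alpha,\beta}s_\beta(b)\in M$ and $\hat N_\omega(s_\omega(a))-s_\omega(N_\omega(a))\in M$, together with $M\cdot_{\alpha,\beta}M=0$, to reduce the bimodule and Nijenhuis compatibility axioms to the corresponding identities in $\hat A$. The only minor difference is that you include the independence-of-section verification up front, whereas the paper defers that to a separate proposition (Proposition~\ref{prop: different sections give}).
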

 \begin{proof}
 	For any $a,b\in A,\,m\in M$, since $s_{\alpha\beta}(a\cdot_{\alpha,\,\beta}b)-s_\alpha(a)\cdot_{\alpha,\,\beta}s_\beta(b)\in M$ implies \[s_{\alpha\beta}(a\cdot_{\alpha,\,\beta}b)\cdot_{\alpha\beta,\gamma}m
 =s_\alpha(a)\cdot_{\alpha,\beta\gamma}
 \left(s_\beta(b)
\cdot_{\beta,\gamma}m\right),\]
 we have
 	\[ (a\cdot_{\alpha,\,\beta}b) \cdot_{\alpha\beta,\gamma} m=s_{\alpha\beta}(a\cdot_{\alpha,\,\beta}b)\cdot_{\alpha\beta,\gamma}m
 =\left(s_\alpha(a)\cdot_{\alpha,\,\beta}s_\beta(b)\right)\cdot_{\alpha,\beta}m
 =a\cdot_{\alpha,\beta\gamma}(b\cdot_{\beta,\gamma}m).\]
 	Hence,  this gives a left $A$-module structure and the case of right module structure is similar.

 Moreover, ${\hat{N}_\omega}(s_\omega(a))-s_\omega(N_\omega(a))\in M$ means that  \[{\hat{N}_\omega}(s_\omega(a))\cdot_{\alpha,\,\beta}m=s_\omega(N_\omega(a))\cdot_{\alpha,\,\beta}m.\]

  Thus we have
 	\begin{align*}
 		N_\alpha(a)\cdot_{\alpha,\,\beta}N_{M,\,\beta}(m)&=s_\omega(N_\alpha(a))\cdot_{\alpha,\,\beta}
 N_{M,\,\beta}(m)\\
 		&=\hat{N}_\alpha(s_\omega(a))\cdot_{\alpha,\,\beta}N_{M,\,\beta}(m)\\
 		&=\hat{N}_{\alpha\beta}(\hat{N}_\alpha(s_\omega(a))\cdot_{\alpha,\,\beta}m
 +s_\omega(a)\cdot_{\alpha,\,\beta}
 N_{M,\,\beta}(m)-N_{\alpha \beta}(s_\omega(a)\cdot_{\alpha,\,\beta}m))\\
 		&=N_{M,\,\alpha\beta}(N_\alpha(a)\cdot_{\alpha,\,\beta}m+a\cdot_{\alpha,\,\beta}N_{M,\,\beta}(m)
 -N_{M,\,\alpha \beta} (a\cdot_{\alpha,\,\beta}m)).
 	\end{align*}
 	It is similar to see \[N_{M,\,\alpha}(m)\cdot_{\alpha,\,\beta}N_\beta(a)
 =N_{M,\,\alpha\beta}(N_{M,\,\alpha}(m)\cdot_{\alpha,\,\beta}a+m\cdot_{\alpha,\,\beta}N_\beta(a)-N_{M,\,\alpha \beta} (m\cdot_{\alpha,\,\beta}a)).\]
This completes the proof.
 \end{proof}

 We further  define  linear maps $\{\psi_{\alpha,\,\beta}\}_{\alpha,\,\beta\in\Omega}:A\ot A\rar M$ and $\{\chi_\omega\}_{\omega\in\Omega}:A\rar M$ respectively by
 \begin{align*}
 	\psi_{\alpha,\,\beta}(a, b)&=s_\alpha(a)\cdot_{\alpha,\,\beta}s_\beta(b)-s_{\alpha\beta}
 (a\cdot_{\alpha,\,\beta}b),\quad\text{ for }\, a,b\in A,\,\alpha,\beta\in\Omega,\\
 	\chi_\omega(a)&={\hat{N}_\omega}(s_\omega(a))-s_\omega(N_\omega(a)),\quad
 \text{ for }\, a\in A, \omega\in\Omega.
 \end{align*}

 \begin{prop}
 	 Let $\Omega$ be a semigroup. The pair
 	$(\{\psi_{\alpha,\,\beta}\}_{\alpha,\,\beta\in\Omega}, \{\chi_\omega\}_{\omega\in\Omega})$ is a 2-cocycle  of   Nijenhuis family $\Omega$-associative  algebra $(A,\{N_\omega\}_{\omega\in\Omega})$ with  coefficients  in the Nijenhuis family $\Omega$-associative  bimodule $(M,\{N_{M,\,\omega}\}_{\omega\in\Omega})$ introduced in Proposition~\ref{Prop: new RB bimodule from abelian extensions}.
 \end{prop}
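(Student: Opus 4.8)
The plan is to unravel the $2$-cocycle condition using the differential $d^2$ on $C^\bullet_{\mathrm{NFA}}(A,M)$ from Definition~\ref{defi: cohomology of Nijenhuis family}. A $2$-cochain is a pair $(f,g)\in C^2_\Alg(A,M)\oplus C^1_{\mathrm{NF}}(A,M)$, and by \eqref{eq:diff}
\[ d^2(\psi,\chi)=\big(\delta^2(\psi),\,-\partial^1(\chi)-\Phi^2(\psi)\big), \]
so proving that $(\{\psi_{\alpha,\beta}\},\{\chi_\omega\})$ is a $2$-cocycle amounts to the two identities $\delta^2(\psi)=0$ in $C^3_\Alg(A,M)$ and $\partial^1(\chi)+\Phi^2(\psi)=0$ in $C^2_{\mathrm{NF}}(A,M)$. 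Everything is computed inside $\hat A$, using three facts: $i(M)$ is an abelian ideal, so products of two elements of $M$ vanish and $\hat A\cdot i(M),\,i(M)\cdot\hat A\subseteq i(M)$; since $\{i_\omega\}$ and $\{N_\omega\}$ are morphisms of Nijenhuis family $\Omega$-associative algebras, $\hat N_\omega$ preserves $i(M)$ and restricts there to $N_{M,\omega}$; and any two elements of $\hat A$ that differ by an element of $i(M)$ have the same product with any element of $i(M)$ and the same $i(M)$-component in any product. Throughout I identify $M$ with $i(M)$, and I use the defining relations $s_\alpha(a)\cdot_{\alpha,\beta}s_\beta(b)=s_{\alpha\beta}(a\cdot_{\alpha,\beta}b)+\psi_{\alpha,\beta}(a,b)$ and $\hat N_\omega(s_\omega(a))=s_\omega(N_\omega(a))+\chi_\omega(a)$.

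For the first identity, I would expand the associativity constraint of $\hat A$,
\[ \big(s_\alpha(a)\cdot_{\alpha,\beta}s_\beta(b)\big)\cdot_{\alpha\beta,\gamma}s_\gamma(c)=s_\alpha(a)\cdot_{\alpha,\beta\gamma}\big(s_\beta(b)\cdot_{\beta,\gamma}s_\gamma(c)\big), \]
substitute the relation for $\psi$ on each side, and take the $M$-component. The $s(A)$-valued terms $s_{(\cdots)}\big((a\cdot b)\cdot c\big)$ and $s_{(\cdots)}\big(a\cdot(b\cdot c)\big)$ are discarded; the surviving products such as $\psi_{\alpha,\beta}(a,b)\cdot_{\alpha\beta,\gamma}s_\gamma(c)$ and $s_\alpha(a)\cdot_{\alpha,\beta\gamma}\psi_{\beta,\gamma}(b,c)$ are rewritten via the bimodule actions of Proposition~\ref{Prop: new RB bimodule from abelian extensions} (e.g.\ $s_\alpha(a)\cdot_{\alpha,\beta\gamma}\psi_{\beta,\gamma}(b,c)=a\cdot_{\alpha,\beta\gamma}\psi_{\beta,\gamma}(b,c)$), together with the two further terms $\psi_{\alpha\beta,\gamma}(a\cdot_{\alpha,\beta}b,c)$ and $\psi_{\alpha,\beta\gamma}(a,b\cdot_{\beta,\gamma}c)$ coming from the outer products; what remains is precisely $\delta^2(\psi)_{\alpha,\beta,\gamma}(a,b,c)=0$. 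This is the classical Hochschild $2$-cocycle computation, merely carrying the $\Omega$-labels along.

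For the second identity, I would write the Nijenhuis family relation for $\hat N$ on $s_\alpha(a),\,s_\beta(b)$,
\[ \hat N_\alpha(s_\alpha(a))\cdot_{\alpha,\beta}\hat N_\beta(s_\beta(b))=\hat N_{\alpha\beta}\Big(\hat N_\alpha(s_\alpha(a))\cdot_{\alpha,\beta}s_\beta(b)+s_\alpha(a)\cdot_{\alpha,\beta}\hat N_\beta(s_\beta(b))-\hat N_{\alpha\beta}\big(s_\alpha(a)\cdot_{\alpha,\beta}s_\beta(b)\big)\Big), \]
substitute the relations for $\psi$ and $\chi$ everywhere, and take the $M$-component. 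The $s(A)$-valued part reproduces the Nijenhuis family relation already holding in $A$ and is discarded. In the $M$-valued part, each inner occurrence of $\hat N_{\alpha\beta}$ acting on an $s(A)$-summand splits off a further $\chi_{\alpha\beta}$-contribution, so the outer $\hat N_{\alpha\beta}=N_{M,\alpha\beta}$ applied to $\hat N_{\alpha\beta}(s(\cdots))$ generates a layer of $N^2_{M,\alpha\beta}$-terms, while abelianness kills the $M\cdot M$ products. Collecting the $\psi$-terms yields exactly the terms comprising $\Phi^2(\psi)_{\alpha,\beta}$ in Definition~\ref{defi: cohomology of Nijenhuis family} (the three layers $k=0,1,2$), and collecting the $\chi$-terms — after rewriting the surviving products through the actions $\rhd_{\alpha,\beta},\,\lhd_{\alpha,\beta}$ and the product $\star_{\alpha,\beta}$ — yields $\partial^1(\chi)_{\alpha,\beta}$; reconciling signs gives $\partial^1(\chi)+\Phi^2(\psi)=0$.

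The main obstacle is this last matching. Since $\hat N_{\alpha\beta}$ appears both outside and inside the bracket, the expansion of the Nijenhuis relation produces nested $N_{M,\alpha\beta}$- and $N^2_{M,\alpha\beta}$-twisted copies of $\psi$ and $\chi$ whose signs and groupings must be tracked carefully and then identified, term by term, against the three separate ingredients $\Phi^2$, $\partial^1$, and the $\star$-product. That they fit together is essentially forced, since $(C^\bullet_{\mathrm{NFA}},d^\bullet)$ is a cochain complex — which itself rests on $\Phi^\bullet$ being a chain map, Proposition~\ref{Prop: Chain map Phi} — so no cancellation is accidental, but the computation is long and every term must be accounted for. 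Once both identities are established, $d^2(\psi,\chi)=(0,0)$, and hence $(\{\psi_{\alpha,\beta}\},\{\chi_\omega\})$ is a $2$-cocycle in $C^\bullet_{\mathrm{NFA}}(A,M)$.
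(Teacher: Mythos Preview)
Your proposal is correct and follows exactly the approach the paper intends: the paper's own ``proof'' simply reads ``The proof is by direct computations, so it is left to the reader,'' and your outline is precisely that direct computation---expanding the associativity of $\hat A$ on sections to obtain $\delta^2(\psi)=0$, and expanding the Nijenhuis family relation for $\hat N$ on sections to obtain $\partial^1(\chi)+\Phi^2(\psi)=0$. Your sketch is in fact considerably more detailed than what the paper provides.
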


 \begin{proof}
 The proof is by direct computations, so it is left to the reader.
 \end{proof}	
 	
 The choice of the section $s_\omega$ in fact determines a splitting
 $$\xymatrix{0\ar[r]&  M\ar@<1ex>[r]^{i_\omega} &\hat{A}\ar@<1ex>[r]^{N_\omega} \ar@<1ex>[l]^{t_\omega}& A \ar@<1ex>[l]^{s_\omega} \ar[r] & 0}$$
 subject to $t_\omega\circ i_\omega=\mathrm{id}_{M,\,\omega},\, t_\omega\circ s_\omega=0$ and $ i_\omega t_\omega+s_\omega N_\omega=\mathrm{id}_{\hat{A},\,\omega}$.
 Then there is an induced isomorphism of vector spaces
 $$\left(\begin{array}{cc} N_\omega& t_\omega\end{array}\right): \hat{A}\cong   A\oplus M: \left(\begin{array}{c} s_\omega\\ i_\omega\end{array}\right).$$
We can  transfer the Nijenhuis family $\Omega$-associative  algebra structure on $\hat{A}$ to $A\oplus M$ via this isomorphism.
  It is direct to verify that this  endows $A\oplus M$ with a multiplication $\{\cdot^\psi_{\alpha,\,\beta}\}_{\alpha,\,\beta\in\Omega}$ and a Nijenhuis  operator family $\{N^\chi_\omega\}_{\omega\in\Omega}$. For $a,b\in A,\,m,n\in M$ and $\alpha,\beta\in\Omega$,  define
 \begin{align}
 \label{eq:mul}(a,m)\cdot^\psi_{\alpha,\,\beta}(b,n)&:=(a\cdot_{\alpha,\,\beta}b,
 a\cdot_{\alpha,\,\beta}n+m\cdot_{\alpha,\,\beta}b+\psi_{\alpha,\,\beta} (a,b)), \\
 \label{eq:dif}N^\chi_\omega(a,m)&:=(N_\omega(a),\chi_\omega(a)+N_{M,\,\omega}(m)).
 \end{align}
 Moreover, we get an abelian extension
\[0\to (M, \{N_{M,\,\omega}\}_{\omega\in\Omega})\xrightarrow{\left(\begin{array}{cc} s_\omega& i_\omega\end{array}\right) } (A\oplus M, N^\chi_\omega)\xrightarrow{\left(\begin{array}{c} N_\omega\\ t_\omega\end{array}\right)} (A, \{N_\omega\}_{\omega\in\Omega})\to 0\]
 which is easily seen to be  isomorphic to the original one \eqref{Eq: abelian extension}.


 Now we investigate the influence of different choices of   sections.

 \begin{prop}\label{prop: different sections give}
 Let $\Omega$ be a semigroup.
 \begin{itemize}
 \item[(i)] Different choices of the section $\{s_\omega\}_{\omega\in\Omega}$ give the same  Nijenhuis family $\Omega$-associative  bimodule structures on $(M, \{N_{M,\,\omega}\}_{\omega\in\Omega})$.

 \item[(ii)] The cohomological class of $(\{\psi_{\alpha,\,\beta}\}_{\alpha,\,\beta\in\Omega},\{\chi_\omega\}_{\omega\in\Omega})$ does not depend on the choice of sections.
 \end{itemize}

 \end{prop}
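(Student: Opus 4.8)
The plan is to use the fact that any two sections of the abelian extension~\eqref{Eq: abelian extension} differ by a linear map taking values in the square-zero ideal $M$, and then to identify the difference of the two attached $2$-cocycles with the coboundary of the corresponding $1$-cochain.

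For part (i), I would take two families of sections $\{s_\omega\}_{\omega\in\Omega}$ and $\{s'_\omega\}_{\omega\in\Omega}$ and put $\varphi_\omega:=s_\omega-s'_\omega:A\rar\hat A$. Since the projection $N_\omega:\hat A\rar A$ satisfies $N_\omega\circ s_\omega=N_\omega\circ s'_\omega=\mathrm{id}_A$, the map $\varphi_\omega$ factors through $\ker N_\omega=i_\omega(M)$, so it may be regarded as a linear map $A\rar M$. Because $M$ is an abelian ideal of $\hat A$, the product in $\hat A$ of any two elements of $M$ vanishes; hence $s_\alpha(a)\cdot_{\alpha,\,\beta}m=s'_\alpha(a)\cdot_{\alpha,\,\beta}m$ and $m\cdot_{\alpha,\,\beta}s_\beta(a)=m\cdot_{\alpha,\,\beta}s'_\beta(a)$ for all $a\in A$, $m\in M$, while $\{N_{M,\,\omega}\}_{\omega\in\Omega}$ is intrinsic to $M$ and involves no section. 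Thus the bimodule structure produced in Proposition~\ref{Prop: new RB bimodule from abelian extensions} is the same for $\{s_\omega\}$ and $\{s'_\omega\}$.

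For part (ii), let $(\psi,\chi)$ and $(\psi',\chi')$ be the $2$-cocycles attached to $\{s_\omega\}$ and $\{s'_\omega\}$, and consider the $1$-cochain $(\varphi,0)\in C^1_\mathrm{NFA}(A,M)=C^1_\Alg(A,M)\oplus C^0_\mathrm{NF}(A,M)$, where $\varphi=\{\varphi_\omega\}_{\omega\in\Omega}$. Substituting $s_\omega=s'_\omega+\varphi_\omega$ into the formulas defining $\psi_{\alpha,\,\beta}$ and $\chi_\omega$, using that the term $\varphi_\alpha(a)\cdot_{\alpha,\,\beta}\varphi_\beta(b)$ vanishes ($M$ is square-zero), that $\hat N_\omega$ restricts to $N_{M,\,\omega}$ on $M$, and that by part (i) the induced actions are the section-independent ones, a short computation gives
\begin{align*}
(\psi_{\alpha,\,\beta}-\psi'_{\alpha,\,\beta})(a,b)&=a\cdot_{\alpha,\,\beta}\varphi_\beta(b)-\varphi_{\alpha\beta}(a\cdot_{\alpha,\,\beta}b)+\varphi_\alpha(a)\cdot_{\alpha,\,\beta}b,\\
(\chi_\omega-\chi'_\omega)(a)&=N_{M,\,\omega}(\varphi_\omega(a))-\varphi_\omega(N_\omega(a)).
\end{align*}
The right-hand sides are precisely $\delta^1(\varphi)_{\alpha,\,\beta}(a,b)$ and $-\Phi^1(\varphi)_\omega(a)$, so by the formula~\eqref{eq:diff} for the differential $d^1$ we obtain $(\psi,\chi)-(\psi',\chi')=d^1(\varphi,0)$, a coboundary in $C^\bullet_\mathrm{NFA}(A,M)$. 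Hence $[(\psi,\chi)]=[(\psi',\chi')]$ in $\mathrm{H}^2_\mathrm{NFA}(A,M)$.

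Both parts are essentially bookkeeping. The only point requiring care is keeping track of the two roles of the symbol $\cdot_{\alpha,\,\beta}$ — the multiplication of $\hat A$ versus the induced $A$-bimodule action on $M$ — when expanding $s_\alpha(a)\cdot_{\alpha,\,\beta}s_\beta(b)$ and $\hat N_\omega(s_\omega(a))$, and this is exactly where part (i) is invoked; I do not anticipate any real obstacle.
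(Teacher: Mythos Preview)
Your proposal is correct and follows essentially the same approach as the paper: introduce the difference $\varphi_\omega=s_\omega-s'_\omega$ (the paper calls it $\gamma_\omega$), use that it lands in the square-zero ideal $M$ to get part~(i), and then verify directly that $(\psi,\chi)-(\psi',\chi')=(\delta^1(\varphi),-\Phi^1(\varphi))=d^1(\varphi,0)$. Your write-up is in fact slightly more explicit than the paper's in naming the $1$-cochain as $(\varphi,0)\in C^1_{\mathrm{NFA}}(A,M)$.
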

 \begin{proof}Let $s_\omega^1$ and $s_\omega^2$ be two distinct sections of $\{N_\omega\}_{\omega\in\Omega}$.
  We define $\gamma_\omega:A\rar M$ by $\gamma_\omega(a)=s_\omega^1(a)-s_\omega^2(a)$.
Since the Nijenhuis family $\Omega$-associative  algebra $(M, \{N_{M,\,\omega}\}_{\omega\in\Omega})$	satisfies  $u\cdot_{\alpha,\,\beta}v=0$ for all $u,v\in M$ and $\alpha,\,\beta\in\Omega,$ we have
 \[s_\omega^1(a)\cdot_{\alpha,\,\beta}m= s_\omega^2(a)\cdot_{\alpha,\,\beta}m+\gamma_\omega(a)\cdot_{\alpha,\,\beta}m
 =s_\omega^2(a)\cdot_{\alpha,\,\beta}m.\]
So different choices of the section $\{s_\omega\}_{\omega\in\Omega}$ give the same  Nijenhuis family $\Omega$-associative  bimodule structures on $(M, \{N_{M,\,\omega}\}_{\omega\in\Omega})$.
We show that the cohomological class of $(\{\psi_{\alpha,\,\beta}\}_{\alpha,\,\beta\in\Omega},$ $\{\chi_\omega\}_{\omega\in\Omega})$ does not depend on the choice of sections.   Then
 	\begin{align*}
 		&\psi_{\alpha,\,\beta}^1(a,b)\\
 &=s_\alpha^1(a)\cdot_{\alpha,\,\beta}s_\beta^1(b)
 -s_{\alpha\beta}^1(a\cdot_{\alpha,\,\beta}b)\\
 		&=(s_\alpha^2(a)+\gamma_\alpha(a))\cdot_{\alpha,\,\beta}(s_\beta^2(b)+\gamma_\beta(b))
 -(s_{\alpha\beta}^2(a\cdot_{\alpha,\,\beta}b)
 +\gamma_{\alpha\beta}(a\cdot_{\alpha,\,\beta}b))\\
 		&=(s_\alpha^2(a)\cdot_{\alpha,\,\beta}s_\beta^2(b)-s_{\alpha\beta}^2(a\cdot_{\alpha,\,\beta}b))
 +s_\alpha^2(a)\cdot_{\alpha,\,\beta}\gamma_\beta(b)+\gamma_\alpha(a)\cdot_{\alpha,\,\beta}s_\beta^2(b)
 -\gamma_{\alpha\beta}(a\cdot_{\alpha,\,\beta}b)\\
 		&=(s_\alpha^2(a)\cdot_{\alpha,\,\beta}s_\beta^2(b)-s_{\alpha\beta}^2(a\cdot_{\alpha,\,\beta}b))
 +a\cdot_{\alpha,\,\beta}\gamma_\beta(b)+\gamma_\alpha(a)\cdot_{\alpha,\,\beta}b
 -\gamma_{\alpha\beta}(a\cdot_{\alpha,\,\beta}b)\\
 		&=\psi^2_{\alpha,\,\beta}(a,b)+\delta^1(\gamma)_{\alpha,\,\beta}(a,b),
 	\end{align*}
 	and
 	\begin{align*}
 		\chi^1_\omega(a)&={\hat{N}_\omega}(s_\omega^1(a))-s_\omega^1(N_\omega(a))\\
 		&={\hat{N}_\omega}(s_\omega^2(a)+\gamma_\omega(a))-(s_\omega^2(N_\omega(a))
 +\gamma_\omega(N_\omega(a)))\\
 		&=({\hat{N}_\omega}(s_\omega^2(a))-s_\omega^2(N_\omega(a)))+{\hat{N}_\omega}
 (\gamma_\omega(a))-\gamma_\omega(N_\omega(a))\\
 		&=\chi^2_\omega(a)+N_{M,\omega}(\gamma_\omega(a))-\gamma_\omega(N_{A,\omega}(a))\\
 		&=\chi^2_\omega(a)-\Phi^1(\gamma)_\omega(a).
 	\end{align*}
 So by Eq.~(\ref{eq:diff}), we have
 \[(\psi^1_{\alpha,\,\beta},\chi^1_\omega)-(\psi^2_{\alpha,\,\beta},\chi^2_\omega)
 =(\delta^1(\gamma)_{\alpha,\,\beta},-\Phi^1(\gamma)_\omega)
 =d^1(\gamma)_{\alpha,\,\beta,\,\omega}.\]
Hence $(\{\psi^1_{\alpha,\,\beta}\}_{\alpha,\,\beta\in\Omega}, \{\chi^1_\omega\}_{\omega\in\Omega})$ and $(\{\psi^2_{\alpha,\,\beta}\}_{\alpha,\,\beta\in\Omega}, \{\chi^2_\omega\}_{\omega\in\Omega})$ form the same cohomological class  {in $\rmH_\mathrm{NFA}^2(A,M)$}.
 \end{proof}

 We show now the isomorphic abelian extensions give rise to the same cohomology classes.
 \begin{prop} \label {prop: extension of Nijenhuis family}
 Let $(A,\{N_\omega\}_{\omega\in\Omega})$ be a  Nijenhuis family $\Omega$-associative  algebra.
 Two isomorphic abelian extensions of Nijenhuis family $\Omega$-associative  algebra $(A, \{N_\omega\}_{\omega\in\Omega})$ by  $(M, \{N_{M,\,\omega}\}_{\omega\in\Omega})$  give rise to the same cohomology class  in $\rmH_\mathrm{NFA}^2(A,M)$.
 \end{prop}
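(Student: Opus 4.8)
The plan is the standard one: use the isomorphism $\{\zeta_\omega\}_{\omega\in\Omega}$ to transport a section of the first extension to the second, and then check that the two resulting $2$-cocycles are in fact literally equal. First I would extract from the commutative diagram~\eqref{Eq: isom of abelian extension} the two facts used throughout: commutativity of the left-hand square gives $\zeta_\omega\circ i_\omega=i_\omega$ for all $\omega$, so $\zeta_\omega$ is the identity on $M$; commutativity of the right-hand square gives $N^2_\omega\circ\zeta_\omega=N^1_\omega$. Then, fixing any section $\{s^1_\omega\}_{\omega\in\Omega}$ of the first extension, I would set $s^2_\omega:=\zeta_\omega\circ s^1_\omega$ and note that $N^2_\omega\circ s^2_\omega=N^2_\omega\circ\zeta_\omega\circ s^1_\omega=N^1_\omega\circ s^1_\omega=\mathrm{id}_A$, so $\{s^2_\omega\}_{\omega\in\Omega}$ is a section of the second extension.

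Next I would compute the associated cocycles. With $\psi^j$ and $\chi^j$ ($j=1,2$) defined from $(\hat{A}_j,\{\hat{N}^j_\omega\}_{\omega\in\Omega},\{s^j_\omega\}_{\omega\in\Omega})$ as in the two displayed formulas preceding this proposition, the fact that $\{\zeta_\omega\}_{\omega\in\Omega}$ is a morphism of Nijenhuis family $\Omega$-associative algebras yields
\[
\psi^2_{\alpha,\,\beta}(a,b)=\zeta_\alpha\big(s^1_\alpha(a)\big)\cdot_{\alpha,\,\beta}\zeta_\beta\big(s^1_\beta(b)\big)-\zeta_{\alpha\beta}\big(s^1_{\alpha\beta}(a\cdot_{\alpha,\,\beta}b)\big)=\zeta_{\alpha\beta}\big(\psi^1_{\alpha,\,\beta}(a,b)\big),
\]
and, using $\zeta_\omega\circ\hat{N}^1_\omega=\hat{N}^2_\omega\circ\zeta_\omega$,
\[
\chi^2_\omega(a)=\hat{N}^2_\omega\big(\zeta_\omega(s^1_\omega(a))\big)-\zeta_\omega\big(s^1_\omega(N_\omega(a))\big)=\zeta_\omega\big(\chi^1_\omega(a)\big).
\]
Since $\psi^1_{\alpha,\,\beta}(a,b)$ and $\chi^1_\omega(a)$ lie in $M$, on which $\zeta$ acts as the identity, this gives $\psi^2_{\alpha,\,\beta}=\psi^1_{\alpha,\,\beta}$ and $\chi^2_\omega=\chi^1_\omega$. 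I would also observe, along the same lines, that the Nijenhuis family bimodule structure induced on $M$ by $(\hat{A}_2,\{s^2_\omega\}_{\omega\in\Omega})$ coincides with the one induced by $(\hat{A}_1,\{s^1_\omega\}_{\omega\in\Omega})$ (immediate from $\zeta$ being an algebra morphism fixing $M$, and the analogue of Proposition~\ref{prop: different sections give}(i)), so that the comparison of cocycles takes place inside a single complex $C^\bullet_\mathrm{NFA}(A,M)$.

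Finally I would invoke Proposition~\ref{prop: different sections give}(ii): the cohomology class in $\rmH_\mathrm{NFA}^2(A,M)$ attached to an abelian extension does not depend on the chosen section. Hence the class of the first extension is represented by $(\{\psi^1_{\alpha,\,\beta}\}_{\alpha,\,\beta\in\Omega},\{\chi^1_\omega\}_{\omega\in\Omega})$ and that of the second by $(\{\psi^2_{\alpha,\,\beta}\}_{\alpha,\,\beta\in\Omega},\{\chi^2_\omega\}_{\omega\in\Omega})=(\{\psi^1_{\alpha,\,\beta}\}_{\alpha,\,\beta\in\Omega},\{\chi^1_\omega\}_{\omega\in\Omega})$, so the two classes agree. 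There is no serious obstacle here; the only point requiring care — rather than any genuine difficulty — is the bookkeeping of the maps $i_\omega$ and $\zeta_\omega$: that $\zeta_\omega$ really fixes $M$ pointwise, and that $s^2_\omega=\zeta_\omega\circ s^1_\omega$ is an admissible section. Once these are in place, everything reduces to direct substitution.
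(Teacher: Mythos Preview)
Your proof is correct and follows essentially the same approach as the paper: transport the section via $\zeta_\omega$, use that $\zeta$ is a Nijenhuis family algebra morphism restricting to the identity on $M$ to identify the two cocycles, and conclude. Your explicit appeal to Proposition~\ref{prop: different sections give}(ii) to handle section-independence is a welcome point of care that the paper leaves implicit.
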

 \begin{proof}
  Assume that $(\hat{A}_1,\{\hat{N}^1_\omega\}_{\omega\in\Omega})$ and $(\hat{A}_2,\{\hat{N}^2_\omega\}_{\omega\in\Omega})$ are two isomorphic abelian extensions of $(A,\{N_\omega\}_{\omega\in\Omega})$ by $(M,\{N_{M,\,\omega}\}_{\omega\in\Omega})$ as is given in \eqref{Eq: isom of abelian extension}. Let $s_\omega^1$ be a section of $(\hat{A}_1,\{\hat{N}^1_\omega\}_{\omega\in\Omega})$. As $N^2_\omega\circ\zeta_\omega=N^1_\omega$, we have
 	\[N^2_\omega\circ(\zeta_\omega\circ s_\omega^1)=N^1_\omega\circ s_\omega^1=\mathrm{id}_{A,\,\omega}.\]
 	Therefore, $\zeta_\omega\circ s_\omega^1$ is a section of $(\hat{A}_2,\{\hat{N}^1_\omega\}_{\omega\in\Omega})$. Denote $s_\omega^2:=\zeta_\omega\circ s_\omega^1$. Since $\zeta_\omega$ is a homomorphism of Nijenhuis family $\Omega$-associative  algebras such that
 \[\zeta_\omega|_M=\mathrm{id}_{M,\,\omega},\quad \zeta_\omega(a\cdot_{\alpha,\,\beta}m)=\zeta_\omega(s_\omega^1(a)\cdot_{\alpha,\,\beta}m)
 =s_\omega^2(a)\cdot_{\alpha,\,\beta}m=a\cdot_{\alpha,\,\beta}m,\]
  so $\zeta_\omega|_M: M\to M$ is compatible with the induced  Nijenhuis family $\Omega$-associative  bimodule structures.
 We have
 	\begin{align*}
 		\psi^2_{\alpha,\,\beta}(a, b)&=s_\alpha^2(a)\cdot_{\alpha,\,\beta}s_\beta^2(b)-s_{\alpha\beta}^2(a\cdot_{\alpha,\,\beta}b)
 =\zeta_\alpha(s_\alpha^1(a))\cdot_{\alpha,\,\beta}\zeta_\beta(s_\beta^1(b))
 -\zeta_{\alpha\beta}(s_{\alpha\beta}^1(a\cdot_{\alpha,\,\beta}b))\\
 		&=\zeta_{\alpha\beta}(s_\alpha^1(a)\cdot_{\alpha,\,\beta}s_\beta^1(b)
 -s_{\alpha\beta}^1(a\cdot_{\alpha,\,\beta}b))
 =\zeta_{\alpha\beta}(\psi^1_{\alpha,\,\beta}(a,b))\\
 		&=\psi^1_{\alpha,\,\beta}(a,b),
 	\end{align*}
 	and
 	\begin{align*}
 		\chi^2_\omega(a)&=\hat{N}^2_\omega(s_\omega^2(a))-s_\omega^2(N_\omega(a))
 =\hat{N}^2_\omega(\zeta_\omega(s_\omega^1(a)))-\zeta_\omega(s_\omega^1(N_\omega(a)))\\
 		&=\zeta_\omega(\hat{N}^1_\omega(s_\omega^1(a))-s_\omega^1(N_\omega(a)))
 =\zeta_\omega(\chi^1_\omega(a))\\
 		&=\chi^1_\omega(a).
 	\end{align*}
 	So two isomorphic abelian extensions give rise to the same element in {$\rmH_\mathrm{NFA}^2(A,M)$}.
\end{proof}

 Now, let us consider the reverse direction.
Given two families of  linear maps  $\{\psi_{\alpha,\,\beta}\}_{\alpha,\,\beta\in\Omega}:A\ot A\rar M$ and $\{\chi_\omega\}_{\omega\in\Omega}:A\rar M$, one can define  a family of multiplication operations $\{\cdot^\psi_{\alpha,\,\beta}\}_{\alpha,\,\beta\in\Omega}$ and a family of $\{N^\chi_\omega\}_{\omega\in\Omega}$ on $A\oplus M$ by Eqs.~(\ref{eq:mul}-\ref{eq:dif}).
 The following fact is important:
 \begin{prop}
 	The triple $(A\oplus M, \{\cdot^\psi_{\alpha,\,\beta}\}_{\alpha,\,\beta\in\Omega},\{N^\chi_\omega\}_{\omega\in\Omega})$ is a Nijenhuis family $\Omega$-associative  algebra   if and only if
 	$(\{\psi_{\alpha,\,\beta}\}_{\alpha,\,\beta\in\Omega},\{\chi_{\omega}\}_{\omega\in\Omega})$ is a 2-cocycle  of the Nijenhuis family $\Omega$-associative  algebra $(A,\{N_\omega\}_{\omega\in\Omega})$ with  coefficients  in $(M,\{N_{M,\omega}\}_{\omega\in\Omega})$.
 \end{prop}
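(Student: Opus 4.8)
The plan is to expand, on the direct sum $A\oplus M$, each of the two defining axioms of a Nijenhuis family $\Omega$-associative algebra for the structure given by \eqref{eq:mul}--\eqref{eq:dif}, split the resulting identities into their $A$-component and their $M$-component, and observe that the $M$-components reduce precisely to the two halves of the $2$-cocycle condition. In both directions the argument is the same computation read forwards or backwards, so it suffices to track which equations are forced.

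\emph{Step 1 (associativity of $\{\cdot^\psi_{\alpha,\beta}\}$).} Take $x=(a,m),y=(b,n),z=(c,p)$ and compute $(x\cdot^\psi_{\alpha,\beta}y)\cdot^\psi_{\alpha\beta,\gamma}z$ and $x\cdot^\psi_{\alpha,\beta\gamma}(y\cdot^\psi_{\beta,\gamma}z)$ via \eqref{eq:mul}. The $A$-components agree because $(A,\mu)$ is $\Omega$-associative. In the $M$-component every product of two elements of $M$ is zero, the module-type terms cancel by the $\Omega$-bimodule axioms of Definition~\ref{defn:associative  algebra} (the bimodule structure on $M$ over $A$ underlying the Nijenhuis family bimodule), and what is left is exactly the vanishing of
\[
a\cdot_{\alpha,\beta\gamma}\psi_{\beta,\gamma}(b,c)-\psi_{\alpha\beta,\gamma}(a\cdot_{\alpha,\beta}b,c)+\psi_{\alpha,\beta\gamma}(a,b\cdot_{\beta,\gamma}c)-\psi_{\alpha,\beta}(a,b)\cdot_{\alpha\beta,\gamma}c,
\]
i.e. (up to an overall sign) $\delta^2_{\Alg}(\psi)=0$. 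Hence $\{\cdot^\psi_{\alpha,\beta}\}$ is $\Omega$-associative if and only if $\psi$ is a Hochschild $2$-cocycle of $A$ with values in $M$.

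\emph{Step 2 (the Nijenhuis family identity).} Now expand
\[
N^\chi_\alpha(x)\cdot^\psi_{\alpha,\beta}N^\chi_\beta(y)=N^\chi_{\alpha\beta}\big(N^\chi_\alpha(x)\cdot^\psi_{\alpha,\beta}y+x\cdot^\psi_{\alpha,\beta}N^\chi_\beta(y)-N^\chi_{\alpha\beta}(x\cdot^\psi_{\alpha,\beta}y)\big)
\]
with $x=(a,m),y=(b,n)$, using \eqref{eq:mul} and \eqref{eq:dif}. The $A$-component is the Nijenhuis family identity of $N$ on $A$, which holds by hypothesis. For the $M$-component, organize the terms by bilinearity according to which of $m$, $n$ they contain: the part containing $n$ but not $m$ is precisely \eqref{eq:left}, the part containing $m$ but not $n$ is precisely \eqref{eq:right} (both hold since $(M,\{N_{M,\omega}\})$ is a Nijenhuis family bimodule), and the part containing both $m$ and $n$ is $0=0$ because $M\cdot_{\alpha,\beta}M=0$. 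The remaining part involves only $a,b,\psi,\chi$; here the terms purely in $\psi$ (together with the $N_{M,\alpha\beta}$'s coming from the nested applications of $N^\chi$) reproduce the four summands of $\Phi^2(\psi)_{\alpha,\beta}(a,b)$, and the terms involving $\chi$ reproduce $\partial^1(\chi)_{\alpha,\beta}(a,b)$, so this part reads $\Phi^2(\psi)_{\alpha,\beta}+\partial^1(\chi)_{\alpha,\beta}=0$. Thus $\{N^\chi_\omega\}$ is a Nijenhuis family for $\{\cdot^\psi_{\alpha,\beta}\}$ if and only if $\partial^1(\chi)+\Phi^2(\psi)=0$.

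\emph{Conclusion.} Combining Steps 1 and 2, $(A\oplus M,\{\cdot^\psi_{\alpha,\beta}\},\{N^\chi_\omega\})$ is a Nijenhuis family $\Omega$-associative algebra if and only if $\delta^2_{\Alg}(\psi)=0$ and $\partial^1(\chi)+\Phi^2(\psi)=0$, which by formula~\eqref{eq:diff} for $d^2$ is exactly $d^2(\psi,\chi)=0$, i.e. $(\psi,\chi)$ is a $2$-cocycle of $(A,\{N_\omega\}_{\omega\in\Omega})$ with coefficients in $(M,\{N_{M,\omega}\}_{\omega\in\Omega})$. The main obstacle is the bookkeeping in Step 2: the operator $N^\chi_{\alpha\beta}$ is applied inside each of the three products on the right-hand side and once more on the outside, which produces a sizeable number of terms, including quadratic ones such as $N^2_{M,\alpha\beta}\psi_{\alpha,\beta}(a,b)$ and terms of the form $N_{M,\alpha\beta}\chi_{\alpha\beta}(a\cdot_{\alpha,\beta}b)$; these must be regrouped carefully — after the bimodule and Nijenhuis-bimodule axioms have been used to clear the $m$- and $n$-dependent terms — so that precisely the combinations defining $\Phi^2$ and $\partial^1$ remain, with the correct signs.
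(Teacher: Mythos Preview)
Your proposal is correct and follows essentially the same route as the paper's proof: expand both axioms on $A\oplus M$, read off $\delta^2_{\Alg}(\psi)=0$ from associativity and $\partial^1(\chi)+\Phi^2(\psi)=0$ from the Nijenhuis identity, then combine via \eqref{eq:diff}. Your Step~2 is in fact slightly more detailed than the paper's, since you explicitly split the $M$-component according to its dependence on $m$ and $n$ and invoke \eqref{eq:left}--\eqref{eq:right} to clear those pieces, whereas the paper silently passes to the $(a,b)$-only equation.
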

 \begin{proof}
 	If $(A\oplus M, \{\cdot^\psi_{\alpha,\,\beta}\}_{\alpha,\,\beta\in\Omega}, \{N^\chi_\omega\}_{\omega\in\Omega})$ is a Nijenhuis family $\Omega$-associative  algebra, then the associativity of $\{\cdot^\psi_{\alpha,\,\beta}\}_{\alpha,\,\beta\in\Omega}$ implies
 	\begin{equation*}
 a\cdot^\psi_{\alpha,\,\beta\gamma}\psi_{\beta,\,\gamma}(b, c)-\psi_{\alpha\beta,\,\gamma}
 (a\cdot^\psi_{\alpha,\,\beta}b, c)+\psi_{\alpha,\,\beta\gamma}(a, b\cdot^\psi_{\alpha,\,\beta}c)-\psi_{\alpha,\,\beta}(a, b)\cdot^\psi_{\alpha\beta,\,\gamma}c=0,
 	\end{equation*}
 	which means $\delta^2(\psi)_{\alpha,\,\beta,\,\gamma}=0$ in $C^\bullet(A,M)$.
 	Since $\{N^\chi_\omega\}_{\omega\in\Omega}$ is a Nijenhuis family operator,
 	for any $a,b\in A, m,n\in M$, we have
 \begin{align*}	
 &N^\chi_{\alpha}((a,m))\cdot^\psi_{\alpha,\,\beta} N^\chi_{\beta}((b,n))\\
 ={}&N^\chi_{\alpha\beta}
 \Big(N^\chi_{\alpha}(a,m)\cdot^\psi_{\alpha,\,\beta}(b,n)+(a,m)
 \cdot^\psi_{\alpha,\,\beta} N^\chi_{\beta}(b,n)-N^\chi_{\alpha \beta}((a,m)\cdot^\psi_{\alpha,\,\beta}(b,n))\Big).
 \end{align*}
 Then $\{\chi_\omega\}_{\omega\in\Omega}, \{\psi_{\alpha,\,\beta}\}_{\alpha,\,\beta\in\Omega}$ satisfy the following equations:
 	\begin{align*}
 		&N_\alpha(a)\cdot^\psi_{\alpha,\,\beta}\chi_\beta(b)
 +\chi_\alpha(a)\cdot^\psi_{\alpha,\,\beta}
 N_\beta(b)+\psi_{\alpha,\,\beta}(N_\alpha(a), N_\beta(b))\\
 		={}&N_{M,\alpha\beta}(\chi_\alpha(a)\cdot^\psi_{\alpha,\,\beta}b)
 +N_{M,\alpha\beta}(\psi_{\alpha,\,\beta}(N_\alpha(a), b))+\chi_{\alpha\beta}(N_\alpha(a)\cdot^\psi_{\alpha,\,\beta}b)\\
 		&+N_{M,\alpha\beta}(a\cdot^\psi_{\alpha,\,\beta}\chi_\beta(b))
 +N_{M,\alpha\beta}(\psi_{\alpha,\,\beta}(a, N_\beta(b)))+\chi_{\alpha\beta}(a\cdot^\psi_{\alpha,\,\beta}N_\beta(b))\\
 		&-N^2_{M,\alpha\beta}(\psi_{\alpha,\,\beta}(a, b))-N_{M,\alpha \beta} \chi_{\alpha\beta}(a\cdot^\psi_{\alpha,\,\beta}b).
 	\end{align*}
 	
 	That is,
 	\[ \partial^1(\chi)_{\alpha,\,\beta}+\Phi^2(\psi)_{\alpha,\,\beta}=0.\]
 	Hence, $(\{\psi_{\alpha,\,\beta}\}_{\alpha,\,\beta\in\Omega},(\{\chi_{\omega}\}_{\omega\in\Omega})$ is a  2-cocycle.

 	 Conversely, if $(\{\psi_{\alpha,\,\beta}\}_{\alpha,\,\beta\in\Omega},\{\chi_{\omega}\}_{\omega\in\Omega})$ is a 2-cocycle, one can easily check that $(A\oplus M, \{\cdot^\psi_{\alpha,\,\beta}\}_{\alpha,\,\beta\in\Omega},$ $\{N^\chi_\omega\}_{\omega\in\Omega})$ is a Nijenhuis family $\Omega$-associative  algebra.
 \end{proof}

 Finally, we show the following result:
 \begin{prop}
 	Two cohomologous $2$-cocyles give rise to isomorphic abelian extensions.
 \end{prop}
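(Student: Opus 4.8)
The plan is to produce an explicit isomorphism between the two abelian extensions attached to the cocycles. Let $(\{\psi^1_{\alpha,\beta}\}_{\alpha,\beta\in\Omega},\{\chi^1_\omega\}_{\omega\in\Omega})$ and $(\{\psi^2_{\alpha,\beta}\}_{\alpha,\beta\in\Omega},\{\chi^2_\omega\}_{\omega\in\Omega})$ be cohomologous $2$-cocycles of $(A,\{N_\omega\}_{\omega\in\Omega})$ with coefficients in $(M,\{N_{M,\,\omega}\}_{\omega\in\Omega})$, and let $(A\oplus M,\{\cdot^{\psi^i}_{\alpha,\beta}\}_{\alpha,\beta\in\Omega},\{N^{\chi^i}_\omega\}_{\omega\in\Omega})$, $i=1,2$, be the two Nijenhuis family $\Omega$-associative algebras they define via Eqs.~\eqref{eq:mul}--\eqref{eq:dif}, each sitting in an abelian extension of $(A,\{N_\omega\}_{\omega\in\Omega})$ by $(M,\{N_{M,\,\omega}\}_{\omega\in\Omega})$ through the canonical inclusion $m\mapsto(0,m)$ and projection $(a,m)\mapsto a$. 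By hypothesis there is a $1$-cochain $\gamma=\{\gamma_\omega\}_{\omega\in\Omega}\in C^1_{\Alg}(A,M)$ with
\[
(\psi^1_{\alpha,\beta},\chi^1_\omega)-(\psi^2_{\alpha,\beta},\chi^2_\omega)
= d^1(\gamma)_{\alpha,\beta,\omega}
= \bigl(\delta^1(\gamma)_{\alpha,\beta},\,-\Phi^1(\gamma)_\omega\bigr),
\]
so that $\psi^1_{\alpha,\beta}-\psi^2_{\alpha,\beta}=\delta^1(\gamma)_{\alpha,\beta}$ and $\chi^1_\omega-\chi^2_\omega=-\Phi^1(\gamma)_\omega$.

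Next I would define, for each $\omega\in\Omega$, the linear map $\zeta_\omega\colon A\oplus M\to A\oplus M$ by $\zeta_\omega(a,m):=(a,\,m+\gamma_\omega(a))$. Since $\gamma_\omega$ is linear, $\zeta_\omega$ is a linear isomorphism with inverse $(a,m)\mapsto(a,m-\gamma_\omega(a))$, it restricts to the identity on $M$ (because $\gamma_\omega(0)=0$), and it intertwines the projections onto $A$; hence $\{\zeta_\omega\}_{\omega\in\Omega}$ already fits into the commutative diagram~\eqref{Eq: isom of abelian extension}, and all that remains is to check that it is a morphism of Nijenhuis family $\Omega$-associative algebras, i.e. $\zeta_{\alpha\beta}\circ\cdot^{\psi^1}_{\alpha,\beta}=\cdot^{\psi^2}_{\alpha,\beta}\circ(\zeta_\alpha\otimes\zeta_\beta)$ and $\zeta_\omega\circ N^{\chi^1}_\omega=N^{\chi^2}_\omega\circ\zeta_\omega$.

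For the first identity, expanding both sides with Eq.~\eqref{eq:mul} and noting that the $A$-components agree while the bimodule terms $a\cdot_{\alpha,\beta}n$ and $m\cdot_{\alpha,\beta}b$ cancel, the claim reduces to
\[
\psi^1_{\alpha,\beta}(a,b)-\psi^2_{\alpha,\beta}(a,b)
= a\cdot_{\alpha,\beta}\gamma_\beta(b)+\gamma_\alpha(a)\cdot_{\alpha,\beta}b-\gamma_{\alpha\beta}(a\cdot_{\alpha,\beta}b),
\]
which is exactly $\psi^1-\psi^2=\delta^1(\gamma)$, with $\delta^1$ the Hochschild differential of $A$ with coefficients in the bimodule $M$ of Proposition~\ref{Prop: new RB bimodule from abelian extensions}, computed just as in the proof of Proposition~\ref{prop: different sections give}. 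For the second identity, expanding both sides with Eq.~\eqref{eq:dif}, the $A$-components agree and the term $N_{M,\omega}(m)$ cancels, leaving
\[
\chi^1_\omega(a)-\chi^2_\omega(a)
= N_{M,\omega}(\gamma_\omega(a))-\gamma_\omega(N_\omega(a))
= -\Phi^1(\gamma)_\omega(a),
\]
which is precisely $\chi^1-\chi^2=-\Phi^1(\gamma)$ by the definition of $\Phi^1$. Thus $\{\zeta_\omega\}_{\omega\in\Omega}$ is an isomorphism of abelian extensions, as desired.

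I do not anticipate a genuine obstacle here: each step is a substitution into Eqs.~\eqref{eq:mul}--\eqref{eq:dif} together with the definition~\eqref{eq:diff} of $d^1$, followed by a bookkeeping match against the two components of the coboundary $d^1(\gamma)$. The only points requiring care are keeping the semigroup indices consistent throughout --- in particular that $\zeta$ applied to a product $a\cdot_{\alpha,\beta}b$ carries the index $\alpha\beta$, which is exactly why $\gamma_{\alpha\beta}$ appears --- and observing that the $A$-bimodule action on $M$ entering the two multiplications $\cdot^{\psi^1}$ and $\cdot^{\psi^2}$ is literally the same action, depending only on $A$ and $M$ and not on $\psi^i$ or $\chi^i$, so that comparing the two extensions is meaningful.
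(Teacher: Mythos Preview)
Your approach is the same as the paper's---build the linear isomorphism $\zeta_\omega(a,m)=(a,m\pm\gamma_\omega(a))$ and verify it intertwines the two Nijenhuis family $\Omega$-associative structures---and your verification of the morphism identities is in fact more explicit than the paper's, which simply asserts that $\zeta_\omega$ is an isomorphism.

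There is, however, one genuine gap at the very start. By Definition~\ref{defi: cohomology of Nijenhuis family}, a $1$-cochain in $C^1_{\mathrm{NFA}}(A,M)$ is \emph{not} just an element of $C^1_{\Alg}(A,M)$: it is a pair $(\gamma^1,\gamma^0)\in C^1_{\Alg}(A,M)\oplus C^0_{\mathrm{NF}}(A,M)$ with $\gamma^0\in M$, and the coboundary reads
\[
d^1(\gamma^1,\gamma^0)=\bigl(\delta^1(\gamma^1),\,-\partial^0(\gamma^0)-\Phi^1(\gamma^1)\bigr).
\]
So ``cohomologous'' only guarantees $\psi^1-\psi^2=\delta^1(\gamma^1)$ and $\chi^1-\chi^2=-\Phi^1(\gamma^1)-\partial^0(\gamma^0)$, not the cleaner form you assumed. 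The paper fixes this by observing that $\partial^0=\Phi^1\circ\delta^0$ (this follows directly from the commutativity $\partial^0\circ\Phi^0=\Phi^1\circ\delta^0$ with $\Phi^0=\mathrm{id}$), so setting $\gamma:=\gamma^1+\delta^0(\gamma^0)$ gives $\delta^1(\gamma)=\delta^1(\gamma^1)$ (since $\delta^1\delta^0=0$) and $-\Phi^1(\gamma)=-\Phi^1(\gamma^1)-\partial^0(\gamma^0)$, which recovers exactly the relation you used. Once you insert this one-line reduction, your argument goes through as written.
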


 \begin{proof}
Given two 2-cocycles $(\{\psi^1_{\alpha,\,\beta}\}_{\alpha,\,\beta\in\Omega}, \{\chi^1_\omega\}_{\omega\in\Omega})$ and $(\{\psi^2_{\alpha,\,\beta}\}_{\alpha,\,\beta\in\Omega}, \{\chi^2_\omega\}_{\omega\in\Omega})$, we can construct two abelian extensions $(A\oplus M, \{\cdot^{\psi_{1}}_{\alpha,\,\beta}\}_{\alpha,\,\beta\in\Omega}, \{N^{\chi^1}_{\omega}\}_{\omega\in\Omega})$ and  $(A\oplus M, \{\cdot^{\psi_{2}}_{\alpha,\,\beta}\}_{\alpha,\,\beta\in\Omega}, \{N^{\chi^2}_{\omega}\}_{\omega\in\Omega})$ via Eqs.~\eqref{eq:mul} and \eqref{eq:dif}. If they represent the same cohomology  class {in $\rmH_\mathrm{NFA}^2(A,M)$}, then there exists two linear maps $\gamma_\omega^0:k\rightarrow M, \gamma_\omega^1:A\to M$ such that $$(\psi^1_{\alpha,\,\beta},\chi^1_\omega)=(\psi^2_{\alpha,\,\beta},\chi^2_\omega)
 +(\delta^1(\gamma^1)_{\alpha,\,\beta},-\Phi^1(\gamma^1)_\omega
 -\partial^0(\gamma^0)_\omega).$$
 	Notice that $\partial^0_\omega=\Phi^1_\omega\circ\delta^0_\omega$. Define $\gamma_\omega: A\rightarrow M$ to be $\gamma^1_\omega+\delta^0(\gamma^0)_\omega$. Then $(\gamma_\omega)_{\omega\in\Omega}$ satisfies
 	\[(\psi^1_{\alpha,\,\beta},\chi^1_\omega)=(\psi^2_{\alpha,\,\beta},\chi^2_\omega)
 +\left(\delta^1(\gamma)_{\alpha,\,\beta},-\Phi^1(\gamma)_\omega\right).\]
 	Define $\{\zeta_\omega\}_{\omega\in\Omega}:A\oplus M\rar A\oplus M$ by
 	\[\zeta_\omega(a,m):=(a, -\gamma_\omega(a)+m).\]
 	Then $\zeta_\omega$ is an isomorphism of these two abelian extensions $(A\oplus M, \{\cdot^{\psi_{1}}_{\alpha,\,\beta}\}_{\alpha,\,\beta\in\Omega}, \{N^{\chi^1}_{\omega}\}_{\omega\in\Omega})$ and  $(A\oplus M, \{\cdot^{\psi_{2}}_{\alpha,\,\beta}\}_{\alpha,\,\beta\in\Omega}, \{N^{\chi^2}_{\omega}\}_{\omega\in\Omega})$.
 \end{proof}

\section{Appendix A: Proof of Proposition~\ref{Prop: Chain map Phi} }
\label{Appendix}

\begin{proof}
	$ (i) $ For $ n=0 $, we need to check the following diagram is commutative.
	\[\xymatrix{
		\C^0_{\Alg}(A,M)\ar[r]^-{\delta^0}\ar[d]^-{\Phi^0}& C^1_{\Alg}(A,M)\ar[d]^{\Phi^1} \\
		\C^0_\mathrm{NF}(A,M)\ar[r]^-{\partial^0}&C^1_\mathrm{NF}(A,M).
	}\]
	
	For $m\in \C_{\Alg}^{0} (A,M), a\in A$,
	\begin{eqnarray*}
		\Phi^{1} \Big(\delta^{0}(m)\Big)_\alpha(a)
		&=& \delta^{0}(m)_\alpha(N_\alpha(a))-N_{M,\,\alpha}  (\delta^{0}(m)_\alpha(a) )\\
		&=& N_\alpha(a) \cdot_{0;\alpha,\,1} m - m \cdot_{1,\,\alpha} N_\alpha(a) - N_{M,\,\alpha}( a \cdot_{0;\alpha,\,1} m - m \cdot_{1,\,\alpha} a)\\
		&=& \partial^{0}(m)_\alpha(a) \\
		&=& \partial^{0} \Big(\Phi^{0}(m)\Big)_\alpha(a).
	\end{eqnarray*}
	
	So
	\[ \partial^{0} \circ \Phi^{0} = \Phi^{1} \circ \delta^{0}.\]

	$ (ii) $ For the general case, we need to prove that the square below is commutative, for all $ n \geqslant 1 $.
	\[\xymatrix{
		C^n_{\Alg}(A,M)\ar[r]^-{\delta^n}\ar[d]^-{\Phi^{n}}& C^{n+1}_{\Alg}(A,M)\ar[d]^{\Phi^{n+1}} \\
		C^n_\mathrm{NF}(A,M)\ar[r]^-{\partial^n}&C^{n+1}_\mathrm{NF}(A,M).
	}\]
	
	For $f \in C_{\Alg}^{n}(A,M) , a_{1}, \dots , a_{n+1} \in A$, On the one hand,
		\begin{eqnarray*}
			&&\Phi^{n+1} \circ \delta^{n}(f)_{\alpha_1,\dots,\alpha_{n+1}}(a_{1},\dots,a_{n+1})\\
			&=& \sum_{k=0}^{n+1}
			\sum_{1\leq i_{1}<\dots<i_{k}\leq n+1}
			(-1)^{n+1-k}N^{n+1-k}_{M,\,\alpha_1\dots\alpha_{n+1}}
			\left(\delta^{n}(f)_{\alpha_1,\dots,\alpha_{n+1}}
		\left(a_{1},\dots,N_{\alpha_{i_1}}(a_{i_{1}}),\dots,
N_{\alpha_{i_k}}(a_{i_{k}}),\dots,a_{n+1}\right)\right)  \\
			& =&\sum_{k=0}^{n+1}
			\sum_{2\leq i_{1}<\dots<i_{k}\leq n+1}
			(-1)^{n+1-k}N^{n+1-k}_{M,\,\alpha_1\dots\alpha_{n+1}} \Big(a_{1}
\cdot_{\alpha_1,\,\alpha_2\dots\alpha_{n+1}} f_{\alpha_2,\dots,\alpha_{n+1}}\\
&&\hspace{5cm}\left(a_{2},\dots,
N_{\alpha_{i_1}}(a_{i_{1}}),\dots,N_{\alpha_{i_k}}(a_{i_{k}}),\dots,a_{n+1}
\right)\Big)\\
			&& +\sum_{k=1}^{n+1}
			\sum_{2\leq i_{1}<\dots<i_{k-1}\leq n+1}
			(-1)^{n+1-k}N^{n+1-k}_{M,\,\alpha_1\dots\alpha_{n+1}}
 \Big(N_{\alpha_1}(a_{1})
 \cdot_{\alpha_1,\,\alpha_2\dots\alpha_{n+1}}
  f_{\alpha_2,\dots,\alpha_{n+1}}\\
&&\hspace{5cm}\left(a_{2},\dots,
N_{\alpha_{i_1}}(a_{i_{1}}),\dots,N_{\alpha_{i_{k-1}}}
(a_{i_{k-1}}),\dots,a_{n+1}\right)\Big)\\
			&& +\sum_{k=0}^{n}
			\sum_{1\leq i_{1}<\dots<i_{k}\leq n+1}(-1)^{n+1-k}
			\sum_{i=1}^{n}(-1)^{i}
			N^{n+1-k}_{M,\alpha_1\dots\alpha_{n+1}}   \Big(f_{\dots,\alpha_1,\dots,\alpha_{n+1}}\\
&&\hspace{5cm}
\left(a_{1},\dots,N_{\alpha_{i_1}}(a_{i_{1}}),\dots,a_{i}\cdot_{\alpha_i,\alpha_{i+1}}
a_{i+1},\dots,N_{\alpha_{i_k}}(a_{i_{k}}),\dots,a_{n+1}\right)\Big)\\
			&& +\sum_{k=1}^{n+1}
			\sum_{1\leq i_{1}<\dots<i_{k}\leq n+1}(-1)^{n+1-k}
			\sum_{i=1}^{n}(-1)^{i}
			N^{n+1-k}_{M,\,\alpha_1\dots\alpha_{n+1}}
			\Big(f_{\alpha_1,\dots,\alpha_i\alpha_{i+1},
\dots,\alpha_{n+1}}\\
&&\hspace{5cm}
\left(a_{1},\dots,N_{\alpha_{i_1}}(a_{i_{1}}),\dots,N_{\alpha_i}(a_{i})
\cdot_{\alpha_i,\alpha_{i+1}}a_{i+1},\dots,N_{\alpha_{i_k}}(a_{i_{k}}),\dots,a_{n+1}\right)\Big)   \\
			&& +\sum_{k=1}^{n+1}
			\sum_{1\leq i_{1}<\dots<i_{k}\leq n+1} (-1)^{n+1-k}
			\sum_{i=1}^{n}(-1)^{i}
			N^{n+1-k}_{M,\,\alpha_1\dots\alpha_{n+1}}
			\Big(f_{\alpha_1,\dots,\alpha_i\alpha_{i+1},
\dots,\alpha_{n+1}}\\
&&\hspace{5cm}
\left(a_{1},\dots,N_{\alpha_{i_1}}(a_{i_{1}}),\dots,
a_{i}\cdot_{\alpha_i,\,\alpha_{i+1}}N_{\alpha_{i+1}}(a_{i+1}),\dots,
N_{\alpha_{i_k}}(a_{i_{k}}),\dots,a_{n+1}\right)\Big)   \\
			&& +\sum_{k=2}^{n+1}
			\sum_{1\leq i_{1}<\dots<i_{k}\leq n+1}(-1)^{n+1-k}
			\sum_{i=1}^{n}(-1)^{i}
			N^{n+1-k}_{M,\,\alpha_1\dots\alpha_{n+1}}
			\Bigg(f_{\alpha_1,\dots,\alpha_i\alpha_{i+1},
\dots,\alpha_{n+1}}\\
&&\hspace{3cm}
\left(a_{1},\dots,N_{\alpha_{i_1}}(a_{i_{1}}),\dots,
\uwave{N_{\alpha_i}(a_{i})\cdot_{\alpha_i,\alpha_{i+1}}
N_{\alpha_{i+1}}(a_{i+1})},\dots,N_{\alpha_{i_k}}(a_{i_{k}}),\dots,a_{n+1}\right)\Bigg)   \\
			&& +(-1)^{n+1}\sum_{k=0}^{n+1}
			\sum_{1\leq i_{1}<\dots<i_{k}\leq n}(-1)^{n+1-k}
			N^{n+1-k}_{M,\,\alpha_1\dots\alpha_{n+1}} \Big(f_{\alpha_1,\dots,
\dots,\alpha_{n}}\\
&&\hspace{5cm}
\left(a_{1},\dots,N_{\alpha_{i_1}}(a_{i_{1}}),\dots,N_{\alpha_{i_k}}(a_{i_{k}}),\dots,a_{n} \right) \cdot_{\alpha_1\dots\alpha_n,\alpha_{n+1}} a_{n+1}
			\Big)  \\
			&& +(-1)^{n+1}\sum_{k=1}^{n+1} 
			\sum_{1\leq i_{1}<\dots<i_{k-1}\leq n} (-1)^{n+1-k}
			N^{n+1-k}_{M,\,\alpha_1\dots\alpha_{n+1}}  \Big(f_{\alpha_1,\dots,\alpha_n}\\
&&\hspace{3cm}
\left(a_{1},\dots,N_{\alpha_{i_1}}(a_{i_{1}}),\dots,N_{\alpha_{k-1}}(a_{i_{k-1}}),\dots,a_{n} \right) \cdot_{\alpha_1\dots\alpha_n,\alpha_{n+1}} N_{\alpha_{n+1}}(a_{n+1})
			\Big)  \\
%
%
			&=& \uline{ \sum_{k=0}^{n+1}
				\sum_{2\leq i_{1}<\dots<i_{k}\leq n+1}(-1)^{n+1-k}
				N^{n+1-k}_{M,\,\alpha_1\dots\alpha_{n+1}}
\Big(a_{1} \cdot_{\alpha_1,\alpha_2\dots\alpha_{n+1}}
f_{\alpha_2,\dots,\alpha_{n+1}}}_{(1)}\\
&&\hspace{5cm}
\uline{\left(a_{2},\dots,N_{\alpha_{i_1}}(a_{i_{1}}),\dots,N_{\alpha_{i_k}}(a_{i_{k}}),\dots,a_{n+1}
\right)\Big) }_{(1)}\\
			&& \uuline{ +\sum_{k=1}^{n+1} 
				\sum_{2\leq i_{1}<\dots<i_{k-1}\leq n+1} (-1)^{n+1-k}
				N^{n+1-k}_{M,\,\alpha_1\dots\alpha_{n+1}}
\Big(N_{\alpha_1}(a_{1})
\cdot_{\alpha_1,\,\alpha_2\dots\alpha_{n+1}}
f_{\alpha_2,\dots,\alpha_{n+1}}}_{(2)}\\
&&\hspace{5cm}
\uuline{
\left(a_{2},\dots,N_{\alpha_{i_1}}(a_{i_{1}}),\dots,N_{\alpha_{i_{k-1}}}(a_{i_{k-1}}),
\dots,a_{n+1}\right)\Big) }_{(2)}\\
			&& \uwave{ -\sum_{k=1}^{n}
				\sum_{1\leq i_{1}<\dots<i_{k}\leq n+1}(-1)^{n+1-k}
				\sum_{i=1}^{n}(-1)^{i}
				N^{n+1-k}_{M,\,\alpha_1\dots\alpha_{n+1}}
				\Big(
f_{\alpha_1,
\dots,\alpha_i\alpha_{i+1},\dots,\alpha_{n+1}}}_{(3)}\\
&&\hspace{5cm}
\uwave{\left(a_{1},\dots,N_{\alpha_{i_1}}(a_{i_{1}}),\dots, N_{\alpha_i,\alpha_{i+1}}(a_{i}\cdot_{\alpha_i,\alpha_{i+1}}
a_{i+1}),\dots,N_{\alpha_{i_k}}(a_{i_{k}}),\dots,a_{n+1}\right)\Big)}_{(3)} \\
			&& \dashuline{ +\sum_{k=1}^{n+1} 
				\sum_{1\leq i_{1}<\dots<i_{k}\leq n+1}(-1)^{n+1-k}
				\sum_{i=1}^{n}(-1)^{i}
				N^{n+1-k}_{M,\,\alpha_1\dots\alpha_{n+1}}
				\Big(
f_{\alpha_1,
\dots,\alpha_i\alpha_{i+1},\dots,\alpha_{n+1}}}_{(4)}\\
&&\hspace{3cm}
\uwave{\left(a_{1},\dots,N_{\alpha_{i_1}}(a_{i_{1}}),\dots,
N_{\alpha_i}(a_{i})\cdot_{\alpha_i,\alpha_{i+1}}a_{i+1},
\dots,N_{\alpha_{i_k}}(a_{i_{k}}),\dots,a_{n+1}\right)\Big) }_{(4)}   \\
			&& \dotuline{ +\sum_{k=1}^{n+1} 
				\sum_{1\leq i_{1}<\dots<i_{k}\leq n+1}(-1)^{n+1-k}
				\sum_{i=1}^{n}(-1)^{i}
				N^{n+1-k}_{M,\,\alpha_1\dots\alpha_{n+1}}
				\Big(
f_{\alpha_1,
\dots,\alpha_i\alpha_{i+1},\dots,\alpha_{n+1}}}_{(5)}\\
&&\hspace{3cm}
\uwave{\left(a_{1},\dots,N_{\alpha_{i_1}}(a_{i_{1}}),\dots,
a_{i}\cdot_{\alpha_i,\alpha_{i+1}}N_{\alpha_{i+1}}(a_{i+1}),\dots,
N_{\alpha_{i_k}}(a_{i_{k}}),\dots,a_{n+1}\right)\Big) }_{(5)}
			\\
			&& \uline{ +\sum_{k=2}^{n+1} 
				\sum_{1\leq i_{1}<\dots<i_{k}\leq n+1}(-1)^{n+1-k}
				\sum_{i=1}^{n}(-1)^{i}
				N^{n+1-k}_{M,\,\alpha_1\dots\alpha_{n+1}}
				\Big(
f_{\alpha_1,
\dots,\alpha_i\alpha_{i+1},\dots,\alpha_{n+1}}}_{(6)}\\
&&\hspace{3cm}
\uwave{
\left(a_{1},\dots,N_{\alpha_{i_1}}(a_{i_{1}}),\dots,{N_{\alpha_i\alpha_{i+1}}
(a_{i}\cdot_{\alpha_i,\alpha_{i+1}}N_{\alpha_{i+1}}(a_{i+1}))},\dots,
N_{\alpha_{i_k}}(a_{i_{k}}),\dots,a_{n+1}\right)\Big) }_{(6)}   \\
			&& \uuline{ +\sum_{k=2}^{n+1} 
				\sum_{1\leq i_{1}<\dots<i_{k}\leq n+1}(-1)^{n+1-k}
				\sum_{i=1}^{n}(-1)^{i}
				N^{n+1-k}_{M,\,\alpha_1\dots\alpha_{n+1}}
				\Big(
f_{\alpha_1,
\dots,\alpha_i\alpha_{i+1},\dots,\alpha_{n+1}}}_{(7)}\\
&&\hspace{3cm}
\uwave{
\left(a_{1},\dots,N_{\alpha_{i_1}}(a_{i_{1}}),\dots,{N_{\alpha_i\alpha_{i+1}}}
(N_{\alpha_i}(a_{i})\cdot_{\alpha_i,\alpha_{i+1}}a_{i+1}),\dots,N_{\alpha_{i_k}}
(a_{i_{k}}),\dots,a_{n+1}\right)\Big) }_{(7)}   \\
			&& \uwave{ -\sum_{k=2}^{n+1}  
				\sum_{1\leq i_{1}<\dots<i_{k}\leq n+1}(-1)^{n+1-k}
				\sum_{i=1}^{n}(-1)^{i}
				N^{n+1-k}_{M,\,\alpha_1\dots\alpha_{n+1}}
				\Big(
f_{\alpha_1,
\dots,\alpha_i\alpha_{i+1},\dots,\alpha_{n+1}}}_{(8)}\\
&&\hspace{3cm}
\uwave{
\left(a_{1},\dots,N_{\alpha_{i_1}}(a_{i_{1}}),\dots,{N^2_{\alpha_i\alpha_{i+1}}
(a_{i}\cdot_{\alpha_i,\alpha_{i+1}}a_{i+1})},
\dots,N_{\alpha_{i_k}}(a_{i_{k}}),\dots,a_{n+1}\right)\Big) }_{(8)}   \\
			&& \dashuline{ +(-1)^{n+1}\sum_{k=0}^{n+1}
				\sum_{1\leq i_{1}<\dots<i_{k}\leq n}
				(-1)^{n+1-k}N^{n+1-k}_{M,\,\alpha_1\dots\alpha_{n+1}}
\Big(
f_{\alpha_1,\dots,\alpha_n}}_{(9)}\\
&&\hspace{3cm}
\uwave{
\left(a_{1},\dots,N_{\alpha_{i_1}}(a_{i_{1}}),\dots,N_{\alpha_{i_k}}(a_{i_{k}}),\dots,a_{n} \right) \cdot_{\alpha_1\dots\alpha_n,\,\alpha_{n+1}}
 a_{n+1}
				\Big) }_{(9)}  \\
			&& \dotuline{ +(-1)^{n+1}\sum_{k=1}^{n+1}
				\sum_{1\leq i_{1}<\dots<i_{k-1}\leq n}(-1)^{n+1-k}
				N^{n+1-k}_{M,\,\alpha_1\dots\alpha_{n+1}}
\Big(
f_{\alpha_1,\dots,\alpha_n}}_{(10)}\\
&&\hspace{3cm}
\uwave{
\left(a_{1},\dots,N_{\alpha_{i_1}}(a_{i_{1}}),\dots,N_{\alpha_{i_{k-1}}}(a_{i_{k-1}}),\dots,a_{n} \right)
\cdot_{\alpha_1\dots\alpha_n,\,\alpha_{n+1}}
 N_{\alpha_{n+1}}(a_{n+1})
				\Big) }_{(10)}.
		\end{eqnarray*}

	On the other hand,
		\begin{eqnarray*}
			&&\partial^{n} \circ \Phi^{n} (f)_{\alpha_1,\dots,\alpha_{n+1}}(a_{1},\dots,a_{n+1})\\
			&=&
			N_{\alpha_1}(a_{1}) \cdot_{\alpha_1,\,\alpha_2\dots\alpha_{n+1}}
\Phi^{n}(f)_{\alpha_2,\dots,\alpha_{n+1}}
 (a_{2},\dots,a_{n+1})\\
			&&-
			N_{M,\,\alpha_1\dots\alpha_{n+1}}
\left( a_{1} \cdot_{\alpha_1,\alpha_2\dots\alpha_{n+1}}
 \Phi^{n}(f)_{\alpha_2,\dots,\alpha_{n+1}} (a_{2},\dots,a_{n+1}) \right)\\
			&& +\sum_{i=1}^{n}(-1)^{i}
			\Phi^{n}(f)_{
\alpha_1,\dots,\alpha_i\alpha_{i+1},\dots,\alpha_{n+1}}
\left(a_{1},\dots,a_{i} \cdot_{\alpha_i,\,\alpha_{i+1}} N_{\alpha_{i+1}}(a_{i+1}),\dots,a_{n+1}\right)\\
			&& +\sum_{i=1}^{n}(-1)^{i}
			\Phi^{n}(f)_{
\alpha_1,\dots,\alpha_i\alpha_{i+1},\dots,\alpha_{n+1}}
\left(a_{1},\dots,N_{\alpha_i}(a_{i})\cdot_{\alpha_i,\,\alpha_{i+1}}a_{i+1},\dots,a_{n+1}\right)\\
			&& -\sum_{i=1}^{n}(-1)^{i}
			  \Phi^{n}(f)_{
\alpha_1,\dots,\alpha_i\alpha_{i+1},\dots,\alpha_{n+1}}
\left(a_{1},\dots,N_{\alpha_i \alpha_{i+1}}(a_{i}\cdot_{\alpha_i,\,\alpha_{i+1}} a_{i+1}),\dots,a_{n+1}\right)\\
			&& +(-1)^{n+1}
\Phi^{n}(f)_{
\alpha_1,\dots,\alpha_{n}}
(a_{1},\dots,a_{n}) \cdot_{\alpha_1\dots\alpha_n,\,\alpha_{n+1}} N_{\alpha_{n+1}}(a_{n+1})\\
			&& -(-1)^{n+1}N_{M,\,\alpha_1\dots\alpha_{n+1}}
\left(
\Phi^{n}(f)_{
\alpha_1,\dots,\alpha_{n}}
(a_{1},\dots,a_{n}) \cdot_{\alpha_1\dots\alpha_n,\,\alpha_{n+1}}
 a_{n+1}\right)\\
			& =&
			\sum_{k=0}^{n}
			\sum_{2\leq i_{1}<\dots<i_{k}\leq n+1}(-1)^{n-k}
			\uwave{N_{\alpha_1}(a_{1})
\cdot_{\alpha_1,\,\alpha_2\dots\alpha_{n+1}}
 N^{n-k}_{M,\,\alpha_2\dots\alpha_{n+1}}
 \Big(
  f_{\alpha_2,\dots,\alpha_{n+1}}}\\
  &&\hspace{5cm}
  \uwave{\left(a_{2},\dots,N_{\alpha_{i_1}}(a_{i_{1}}),\dots,N_{\alpha_{i_k}}(a_{i_{k}}),\dots,a_{n+1}\right)
  \Big)}\\
			&& -
			\sum_{k=0}^{n}
			\sum_{2\leq i_{1}<\dots<i_{k}\leq n+1}(-1)^{n-k}
			N_{M,\alpha_1\dots\alpha_n}
			\Big(a_{1} \cdot_{\alpha_1,\,\alpha_2\dots\alpha_{n+1}}
 N^{n-k}_{M,\alpha_2\dots\alpha_{n+1}} \Big(
 f_{\alpha_2,\dots,\alpha_{n+1}}\\
 &&\hspace{5cm}
 \left(a_{2},\dots,N_{\alpha_{i_1}}(a_{i_{1}}),\dots,N_{\alpha_{i_k}}(a_{i_{k}}),\dots,a_{n+1}
 \right)\Big)\Big)\\
			&& +\sum_{i=1}^{n}(-1)^{i}
			\sum_{k=0}^{n} 
			\sum_{1\leq i_{1}<\dots<i_{k}\leq n+1}(-1)^{n-k}
			N^{n-k}_{M,\,\alpha_1\dots\alpha_{n+1}}
			\Big(
f_{\alpha_1,\dots,
\alpha_i\alpha_{i+1}\dots,\alpha_{n+1}}\\
&&\hspace{3cm}
\left(a_{1},\dots,N_{\alpha_{i_1}}(a_{i_{1}}),\dots,a_{i}\cdot_{\alpha_i,\alpha_{i+1}}
N_{\alpha_{i+1}}(a_{i+1})\dots,N_{\alpha_{i_k}}(a_{i_{k}}),\dots,a_{n+1}\right)\Big)
			\\
		&& +\sum_{i=1}^{n}(-1)^{i}
			\sum_{k=1}^{n}
			\sum_{1\leq i_{1}<\dots<i_{k}\leq n+1}(-1)^{n-k}
			N^{n-k}_{M,\,\alpha_1\dots\alpha_{n+1}}
			\Big(
f_{\alpha_1,\dots,
\alpha_i\alpha_{i+1}\dots,\alpha_{n+1}}\\
&&\hspace{3cm}
\left(a_{1},\dots,N_{\alpha_{i_1}}(a_{i_{1}}),\dots,N_{\alpha_i\alpha_{i+1}}
(a_{i}\cdot_{\alpha_i,\alpha_{i+1}} N_{\alpha_{i+1}}(a_{i+1}))\dots,N_{\alpha_{i_k}}(a_{i_{k}}),\dots,a_{n+1}\right)\Big)
			\\
			&& +\sum_{i=1}^{n}(-1)^{i}
			\sum_{k=0}^{n}
			\sum_{1\leq i_{1}<\dots<i_{k}\leq n+1}(-1)^{n-k}
			N^{n-k}_{M,\alpha_1\dots\alpha_{n+1}}
			\Big(
f_{\alpha_1,\dots,
\alpha_i\alpha_{i+1},\dots,\alpha_{n+1}}\\
&&\hspace{3cm}
\left(a_{1},\dots,N_{\alpha_{i_1}}(a_{i_{1}}),\dots,N_{\alpha_i}(a_{i})
\cdot_{\alpha_i,\alpha_{i+1}}
a_{i+1}\dots,N_{\alpha_{i_k}}(a_{i_{k}}),\dots,a_{n+1}\right)\Big)
			\\
			&& +\sum_{i=1}^{n}(-1)^{i}
			\sum_{k=1}^{n}
			\sum_{1\leq i_{1}<\dots<i_{k}\leq n+1}(-1)^{n-k}
			N^{n-k}_{M,\alpha_1\dots\alpha_{n+1}}
			\Big(
f_{\alpha_1,\dots,
\alpha_i\alpha_{i+1},\dots,\alpha_{n+1}}\\
&&\hspace{3cm}
\left(a_{1},\dots,N_{\alpha_{i_1}}(a_{i_{1}}),\dots,N_{\alpha_i\alpha_{i+1}}
(N_{\alpha_i}(a_{i})\cdot_{\alpha_i,\alpha_{i+1}}a_{i+1})\dots,N_{
\alpha_{i_k}}(a_{i_{k}}),\dots,a_{n+1}\right)\Big)
			\\
			&& -\sum_{i=1}^{n}(-1)^{i}
			\sum_{k=1}^{n}
			\sum_{1\leq i_{1}<\dots<i_{k}\leq n+1}(-1)^{n-k}
			N^{n-k}_{M,\alpha_1\dots\alpha_{n+1}}
			\Big(
f_{\alpha_1,\dots,
\alpha_i\alpha_{i+1},\dots,\alpha_{n+1}}\\
&&\hspace{3cm}
\left(a_{1},\dots,N_{\alpha_{i_1}}(a_{i_{1}}),\dots,
N_{\alpha_i\alpha_{i+1}}(a_{i}\cdot_{\alpha_i,\alpha_{i+1}}a_{i+1})\dots,N_{\alpha_{i_k}}
(a_{i_{k}}),\dots,a_{n+1}\right)\Big)
			\\
			&& -\sum_{i=1}^{n}(-1)^{i}
			\sum_{k=1}^{n}
			\sum_{1\leq i_{1}<\dots<i_{k}\leq n+1}(-1)^{n-k}
			N^{n-k}_{M,\alpha_1\dots\alpha_{n+1}}
			\Big(
f_{\alpha_1,\dots,
\alpha_i\alpha_{i+1},\dots,\alpha_{n+1}}\\
&&\hspace{3cm}
\left(a_{1},\dots,N_{\alpha_{i_1}}(a_{i_{1}}),\dots,
N^2_{\alpha_i\alpha_{i+1}}(a_{i}\cdot_{\alpha_i,\alpha_{i+1}}a_{i+1})
\dots,N_{\alpha_{i_k}}(a_{i_{k}}),\dots,a_{n+1}\right)\Big)
			\\
			&& +(-1)^{n+1}
			\sum_{k=0}^{n}
			\sum_{1\leq i_{1}<\dots<i_{k}\leq n}
			(-1)^{n-k}\uwave{N^{n-k}_{M,\,\alpha_1\dots\alpha_{n+1}}
\Big(
f_{\alpha_1,\dots,\alpha_n}}\\
&&\hspace{3cm}
\uwave{\left(a_{1},\dots,N_{\alpha_{i_1}}(a_{i_{1}}),\dots,N_{\alpha_{i_k}}(a_{i_{k}}),
\dots,a_{n}\right)\Big)
 \cdot_{\alpha_1\dots \alpha_n,\alpha_{n+1}}N_{\alpha_{n+1}}(a_{n+1})}\\
			&& -(-1)^{n+1}
			\sum_{k=0}^{n}
			\sum_{1\leq i_{1}<\dots<i_{k}\leq n}
			(-1)^{n-k} N_{M,\,\alpha_1\dots\alpha_{n+1}}
			\Big(N^{n-k}_{M,\,\alpha_1\dots\alpha_n}
\Big(
f_{\alpha_1,\dots,\alpha_n}\\
&&\hspace{3cm}
\left(a_{1},\dots,N_{\alpha_{i_1}}(a_{i_{1}}),\dots,N_{\alpha_{i_k}}(a_{i_{k}}),\dots,a_{n}\right)\Big) \cdot_{\alpha_1\dots\alpha_n,\alpha_{n+1}}
 a_{n+1} \Big)\\
			&& \uline{ =
				\sum_{k=0}^{n}  
				\sum_{2\leq i_{1}<\dots<i_{k}\leq n+1}(-1)^{n-k}
				N_{M,\,\alpha_1\dots\alpha_{n+1}}
\Big(a_{1}
\cdot_{\alpha_1,\alpha_2\dots\alpha_{n+1}}
 N^{n-k}_{M,\,\alpha_2\dots\alpha_{n+1}}
  \Big(
  f_{\alpha_2,\dots,\alpha_{n+1}}}_{(11)} \\
  &&\hspace{3cm}
  \uline{\left(a_{2},\dots,N_{\alpha_{i_1}}(a_{i_{1}}),\dots,N_{\alpha_{i_k}}(a_{i_{k}}),\dots,a_{n+1}\right)\Big)\Big) }_{(11)} \\
			&& \uuline{ +
				\sum_{k=0}^{n}
				\sum_{2\leq i_{1}<\dots<i_{k}\leq n+1}(-1)^{n-k}
				N^{n-k}_{M,\,\alpha_1\dots\alpha_{n+1}}
\Big(N_{\alpha_1}(a_{1})
 \cdot_{\alpha_1,\alpha_2\dots\alpha_{n+1}}
   f_{\alpha_2,\dots,\alpha_{n+1}}}_{(2)}\\
   &&\hspace{3cm}
   \uuline{\left(a_{2},\dots,N_{\alpha_{i_1}}(a_{i_{1}}),\dots,N_{\alpha_{i_k}}
   (a_{i_{k}}),\dots,a_{n+1}\right)\Big) }_{(2)} \\
			&& \uline{ +
				\sum_{k=0}^{n}
				\sum_{2\leq i_{1}<\dots<i_{k}\leq n+1}(-1)^{n+1-k}
				N^{n+1-k}_{M,\,\alpha_1\dots\alpha_{n+1}}
\Big(a_{1}
\cdot_{\alpha_1,\alpha_2\dots\alpha_{n+1}}
 f_{\alpha_2,\dots,\alpha_{n+1}}}_{(1)}\\
 &&\hspace{3cm}
 \uline{\left(a_{2},\dots,N_{\alpha_{i_1}}(a_{i_{1}}),\dots,N_{\alpha_{i_k}}(a_{i_{k}}),
 \dots,a_{n+1}\right)\Big) }_{(1)} \\
			&& \uline{ -
				\sum_{k=0}^{n} 
				\sum_{2\leq i_{1}<\dots<i_{k}\leq n+1}(-1)^{n-k}
				N_{M,\,\alpha_1\dots\alpha_{n+1}}
				\Big(a_{1}
\cdot_{\alpha_1,\alpha_2\dots\alpha_{n+1}}
 N^{n-k}_{M,\,\alpha_2\dots\alpha_{n+1}}
 \Big(
 f_{\alpha_2,\dots,\alpha_{n+1}}}_{(11)}\\
 &&\hspace{3cm}
 \uline{\left(a_{2},\dots,N_{\alpha_{i_1}}(a_{i_{1}}),\dots,N_{\alpha_{i_k}}(a_{i_{k}}),\dots,a_{n+1}\right)\Big)\Big) }_{(11)}\\
			&& \dotuline{ +\sum_{i=1}^{n}(-1)^{i}
				\sum_{k=0}^{n}
				\sum_{1\leq i_{1}<\dots<i_{k}\leq n+1}(-1)^{n-k}
				N^{n-k}_{M,\,\alpha_1\dots\alpha_{n+1}}
				\Big(
f_{\alpha_1,
\dots,\alpha_i\alpha_{i+1},\dots,\alpha_{n+1}}}_{(5)}
		\\
&&\hspace{3cm}
\dotuline{\left(a_{1},\dots,N_{\alpha_{i_1}}(a_{i_{1}}),\dots,a_{i}\cdot_{\alpha_i,\,\alpha_{i+1}}
N_{\alpha_{i+1}}(a_{i+1})\dots,N_{\alpha_{i_k}}(a_{i_{k}}),\dots,a_{n+1}\right)\Big) }_{(5)}
			\\
			&& \uline{ +\sum_{i=1}^{n}(-1)^{i}
				\sum_{k=1}^{n} 
				\sum_{1\leq i_{1}<\dots<i_{k}\leq n+1}(-1)^{n-k}
				N^{n-k}_{M,\,\alpha_1\dots\alpha_{n+1}}
				\Big(
f_{\alpha_1,
\dots,\alpha_i\alpha_{i+1},\dots,\alpha_{n+1}}}_{(6)}\\
&&\hspace{3cm}
\uline{\left(a_{1},\dots,N_{\alpha_{i_1}}(a_{i_{1}}),\dots,N_{\alpha_i\alpha_{i+1}}
(a_{i}\cdot_{\alpha_i,\alpha_{i+1}}
N_{\alpha_{i+1}}(a_{i+1}))\dots,N_{\alpha_{i_k}}(a_{i_{k}}),\dots,a_{n+1}\right)\Big) }_{(6)}
			\\
			&& \dashuline{ +\sum_{i=1}^{n}(-1)^{i}
				\sum_{k=0}^{n}
				\sum_{1\leq i_{1}<\dots<i_{k}\leq n+1}(-1)^{n-k}
				N^{n-k}_{M,\,\alpha_1\dots\alpha_{n+1}}
				\Big(
f_{\alpha_1,
\dots,\alpha_i\alpha_{i+1},\dots,\alpha_{n+1}}}_{(4)}\\
&&\hspace{3cm}
\dashuline{\left(a_{1},\dots,N_{\alpha_{i_1}}(a_{i_{1}}),\dots,N_{\alpha_i}(a_{i})
\cdot_{\alpha_i,\alpha_{i+1}}a_{i+1}\dots,N_{\alpha_{i_k}}(a_{i_{k}}),\dots,a_{n+1}\right)\Big) }_{(4)}
			\\
			&& \uuline{ +\sum_{i=1}^{n}(-1)^{i}
				\sum_{k=1}^{n}
				\sum_{1\leq i_{1}<\dots<i_{k}\leq n+1}(-1)^{n-k}
				N^{n-k}_{M,\,\alpha_1\dots\alpha_{n+1}}
				\Big(
f_{\alpha_1,
\dots,\alpha_i\alpha_{i+1},\dots,\alpha_{n+1}}}_{(7)}\\
&&\hspace{3cm}
\uuline{\left(a_{1},\dots,N_{\alpha_{i_1}}(a_{i_{1}}),\dots,N_{\alpha_i\alpha_{i+1}}
(N_{\alpha_i}(a_{i})\cdot_{\alpha_i,\alpha_{i+1}}a_{i+1})\dots,N_{\alpha_{i_k}}(a_{i_{k}}),
\dots,a_{n+1}\right)\Big) }_{(7)}
			\\
			&& \uwave{ -\sum_{i=1}^{n}(-1)^{i}
				\sum_{k=0}^{n}
				\sum_{1\leq i_{1}<\dots<i_{k}\leq n+1}(-1)^{n-k}
				N^{n-k}_{M,\,\alpha_1\dots\alpha_{n+1}}
				\Big(
f_{\alpha_1,
\dots,\alpha_i\alpha_{i+1},\dots,\alpha_{n+1}}}_{(3)}\\
&&\hspace{3cm}
\uwave{\left(a_{1},\dots,N_{\alpha_{i_1}}(a_{i_{1}}),\dots,N_{\alpha_i,\alpha_{i+1}}(a_{i}\cdot_{\alpha_i,\alpha_{i+1}}
a_{i+1})\dots,N_{\alpha_{i_k}}(a_{i_{k}}),\dots,a_{n+1}\right)\Big) }_{(3)}
			\\
			&& \uwave{ - \sum_{i=1}^{n}(-1)^{i}
				\sum_{k=1}^{n}  
				\sum_{1\leq i_{1}<\dots<i_{k}\leq n+1}(-1)^{n-k}
				N^{n-k}_{M,\,\alpha_1\dots\alpha_{n+1}}
				\Big(
f_{\alpha_1,
\dots,\alpha_i\cdot_{\alpha_i,\alpha_{i+1}}\alpha_{i+1},\dots,\alpha_{n+1}}}_{(8)}\\
&&\hspace{3cm}
\uwave{\left(a_{1},\dots,N_{\alpha_{i_1}}(a_{i_{1}}),\dots,N^2_{\alpha_i\alpha_{i+1}}
(a_{i}\cdot_{\alpha_i,\alpha_{i+1}}a_{i+1})\dots,N_{\alpha_{i_k}}(a_{i_{k}}),\dots,a_{n+1}\right)\Big) }_{(8)}
		\\
			&& \dotuline{ +(-1)^{n+1}
				\sum_{k=0}^{n}
				\sum_{1\leq i_{1}<\dots<i_{k}\leq n}
				(-1)^{n-k}N^{n-k}_{M,\,\alpha_1\dots\alpha_{n+1}}
\Big(
 f_{\alpha_1,\dots,\alpha_n}}_{(10)}\\
 &&\hspace{3cm}
\dotuline{ \left(a_{1},\dots,N_{\alpha_{i_1}}(a_{i_{1}}),\dots,N_{\alpha_{i_k}}(a_{i_{k}}),\dots,a_{n}\right)
 \cdot_{\alpha_1\dots\alpha_n,\alpha_{n+1}}
  N_{\alpha_{n+1}}(a_{n+1}) \Big) }_{(10)} \\
			&& \uuline{ +(-1)^{n+1}
				\sum_{k=0}^{n}
				\sum_{1\leq i_{1}<\dots<i_{k}\leq n}
				(-1)^{n-k}N^{n-k}_{M,\,\alpha_1\dots\alpha_{n+1}}
\Big(
				N^{n-k}_{M,\,\alpha_1\dots\alpha_{n}}
\Big(
f_{\alpha_1,\dots,\alpha_n}}_{(12)}\\
&&\hspace{3cm}
\uuline{\left(a_{1},\dots,N_{\alpha_{i_1}}(a_{i_{1}}),\dots,N_{\alpha_{i_k}}(a_{i_{k}}),
\dots,a_{n}\right)\Big)
\cdot_{\alpha_1\dots \alpha_n,\alpha_{n+1}}
 a_{n+1} \Big) }_{(12)} \\
			&& \dashuline{ +(-1)^{n+1}
				\sum_{k=0}^{n+1}  
				\sum_{1\leq i_{1}<\dots<i_{k}\leq n}
				(-1)^{n+1-k}N^{n+1-k}_{M,\,\alpha_1\dots\alpha_{n+1}}
\Big(
				f_{\alpha_1,\dots,\alpha_n}}_{(9)}\\
&&\hspace{3cm}
\dashuline{\left(a_{1},\dots,N_{\alpha_{i_1}}(a_{i_{1}}),\dots,N_{\alpha_{i_k}}(a_{i_{k}}),
\dots,a_{n}\right)
\cdot_{\alpha_1\dots \alpha_n,\alpha_{n+1}}
 a_{n+1} \Big) }_{(9)} \\
			&& \uuline{ -(-1)^{n+1}
				\sum_{k=0}^{n}
				\sum_{1\leq i_{1}<\dots<i_{k}\leq n}
				(-1)^{n-k}N_{M,\,\alpha_1\dots\alpha_{n+1}}
				\Big(
N^{n-k}_{M,\,\alpha_1\dots\alpha_{n}}
\Big(
f_{\alpha_1,\dots,\alpha_n}}_{(12)}\\
&&\hspace{3cm}
\uuline{\left(a_{1},\dots,N_{\alpha_{i_1}}(a_{i_{1}}),\dots,N_{\alpha_{i_k}}
(a_{i_{k}}),\dots,a_{n}\right)\Big)
\cdot_{\alpha_1\dots \alpha_n,\alpha_{n+1}}
 a_{n+1} \Big). }_{(12)} \\
		\end{eqnarray*}
	Compare the above two equations and it is easy to see that they are equal, i.e.,
	\[ \Phi^{n+1} \circ \delta^{n} = \partial^{n} \circ \Phi^{n}.\]
	
	This completes the proof.
\end{proof}


\begin{thebibliography}{99}



\bibitem{Agu20} M. Aguiar, Dendriform algebras relative to a semigroup, {\it Symmetry, Integrability and Geometry: Methods and Applications SIGMA} {\bf 16} (2020), 066, 15 pages. 


\bibitem{Ben} S. Benabdelhafidh, Deformations theory and minimal model of operads for Nijenhuis algebras morphisms, arXiv preprint arXiv:2508.06745 (2025).

\bibitem{CGM00}
J.~F.~Cari\~{n}ena, J.~Grabowski and G.~Marmo,
Quantum bi-Hamiltonian systems,
Internat. {\it J. Modern Phys}. A {\bf 15} (2000), no. 30, 4797-4810.


\bibitem{Das2020}
A.~Das,
Deformations of associative Rota-Baxter operators,
{\it J. Algebra} \textbf{560} (2020), 144-180.

\bibitem{Ddas} A. Das, Deformations and homotopy theory for Rota-Baxter family algebras, arXiv:math. RA/2212.14072

\bibitem{Das} A. Das, Twisted Rota-Baxter families and NS-family algebras, {\it J. Algebra} {\bf 612} (2022), 577-615.

\bibitem{ddas} A. Das, Deformation cohomology of Nijenhuis algebras and applications to extensions, inducibility of automorphisms and homotopy algebras, arXiv preprint arXiv:2412.15569 (2024).

 
\mbibitem{FBP} K. Ebrahimi-Fard, J. Gracia-Bondia and F. Patras, A Lie theoretic approach to renormalization,
{\it Comm. Math. Phys.} {\bf 276} (2007), 519-549.

\bibitem{EbrahimiFard2004}
K.~Ebrahimi-Fard, On the associative Nijenhuis relation, {\it Electron.
 J. Combin}. {\bf 11} (2004), no.~1, Research Paper 38, 13pp.

\bibitem{Foi20} L. Foissy, Typed binary trees and generalized dendrifom algebras and typed binary trees, {\it J. Algebra} {\bf 586} (2021), 1-61. 

\bibitem{Foi18} L. Foissy, Algebraic structures on typed decorated planar rooted trees, {\it SIGMA Symmetry Integrability Geom. Methods Appl.} {\bf 17} (2021), 086, 28 pp.

\bibitem{FP} L. Foissy and X. Peng, Typed angularly decorated planar rooted trees and generalized Rota-Baxter algebras, {\it J. Algebra Comb.}, to appear.




\bibitem{GGZ21} X. Gao, L. Guo and Y. Zhang, Commutative matching Nijenhuis operators, shuffle products with decorations and matching Zinbiel algebras, {\it J. Algebra} {\bf 586} (2021), 402-432.

\bibitem{gers}  M. Gerstenhaber, On the deformation of rings and algebras, {\it Ann. of Math}. {\bf 79} (2) (1964), 59-103.

\bibitem{gers-sch} M. Gerstenhaber and S. D. Schack, On the deformation of algebra morphisms and diagrams, {\it Trans. Amer. Math. Soc.} {\bf 279} (1983), no. 1, 1-50. 


\bibitem{Guo09} L. Guo, Operated semigroups, Motzkin paths and rooted trees, {\it J. Algebraic Combin.} {\bf 29} (2009), 35-62.

\bibitem{hoch} G. Hochschild, On the cohomology groups of an associative algebra, {\it Ann. Math. } {\bf 46} (2) (1945), 58-67. 



\mbibitem{Kur} A. G. Kurosh, Free sums of multiple operators algebras, {\it Siberian. Math. J.} {\bf 1} (1960), 62-70.


\mbibitem{DK} D. Kreimer and Erik Panzer, Hopf-algebraic renormalization
of Kreimer's toy model, Master thesis, Handbook, arxiv.1202.3552.









\bibitem{LeiGuo}
P.~Lei and L.~Guo,
Nijenhuis algebras, NS algebras, and N-dendriform algebras,
{\it Front. Math. China}{\bf 7} (2012), no. 5, 827-846.



\bibitem{Mabrouk} T. Ma, S. Mabrouk, A. Makhlouf and F. Song, Nijenhuis operators and mock-Lie bialgebras, arXiv preprint arXiv:2501.11070 (2025).



\bibitem{nij-ric} A. Nijenhuis and R. W. Richardson, Cohomology and deformations in graded Lie algebras, {\it Bull. Amer. Math. Soc.} {\bf 72} (1966), 1-29.




\bibitem{song} C. Song, K. Wang, Y. Zhang and G. Zhou, Deformations and homotopy theory of Nijenhuis associative algebras, arXiv preprint arXiv:2412.17253 (2024).




\bibitem{Wang} C. Song, K. Wang and Y. Zhang, Deformations and cohomology theory of $\Omega$-Rota-Baxter algebras of arbitrary weight, {\it J. Geom. Phys.} {\bf 201} (2024): 105217.


\bibitem{Teng} W. Teng and Y. Xiao, Twisted Rota-Baxter family operators on Hom-associative algebras, arXiv preprint arXiv:2410.20175 (2024).

\bibitem{ZGG} Y. Zhang, X. Gao and L. Guo, Matching Nijenhuis algebras, matching dendriform algebras and matching pre-Lie algebras, {\it J. Algebra} {\bf 552} (2020), 134-432.

\mbibitem{ZG} Y. Zhang and X. Gao, Free Nijenhuis family algebras and (tri)dendriform family algebras, {\it Pacific
 J. Math.} {\bf 301} (2019), 741-766.


\bibitem{ZGM} Y. Zhang, X. Gao and D. Manchon, Free (tri)dendriform family algebras, {\it J. Algebra} {\bf 547} (2020), 456-493.


\bibitem{ZGM23}  Y.  Zhang, X. Gao and D. Manchon, Free Rota-Baxter family algebras and free (tri)dendriform family algebras, {\it Algebr. Represent. Theory}, https://doi.org/10.1007/s10468-022-10198-3.

\bibitem{ZM} Y. Zhang and D. Manchon, Free pre-Lie family algebras, {\it Ann. Inst. H. Poincaré D
	Comb. Phys. Interact.}, DOI 10.4171/AIHPD/162.

\end{thebibliography}
\end{document}